\def\paragraph{\@startsection{paragraph}{4}
	\z@\z@{-\fontdimen2\font}%
	{\normalfont\bfseries}}
\def\subsubsection{\@startsection{subsubsection}{3}
	\z@{.5\linespacing\@plus.7\linespacing}{-.5em}%
	{\normalfont\bfseries}}
\newtheorem{thm}{Theorem}[section]
\newtheorem{lem}[thm]{Lemma}
\newtheorem{prop}[thm]{Proposition}
\newtheorem{cor}[thm]{Corollary}
\theoremstyle{definition}
\newtheorem{eg}[thm]{Example}
\newtheorem{assump}[thm]{Assumption}
\theoremstyle{remark}
\newtheorem{rmk}[thm]{Remark}
\numberwithin{equation}{section}
\newcommand\R{\mathbb R}
\newcommand\verB[1]{#1}
\begin{document}

\title[Analysis of Decoder Width for Parametric PDEs]{Analysis of the Decoder Width for Parametric Partial Differential Equations}


\author{Zhanhong Ye}
\address{Beijing International Center for Mathematical Research, Peking University, Beijing, China}
\email{yezhanhong@pku.edu.cn}

\author{Hongsheng Liu}
\address{Central Software Institute, Huawei Technologies Co. Ltd, Hangzhou, China}
\email{liuhongsheng4@huawei.com}

\author{Zidong Wang}
\address{Central Software Institute, Huawei Technologies Co. Ltd, Hangzhou, China}
\email{wang1@huawei.com}

\author{Bin Dong}
\address{Beijing International Center for Mathematical Research, Peking University, Beijing, China}
\address{Center for Machine Learning Research, Peking University, Beijing, China}
\email{dongbin@math.pku.edu.cn}
\thanks{Bin Dong is the corresponding author. }

\subjclass[2010]{41A46, 35A17}

\date{}

\dedicatory{}

\keywords{Parametric partial differential equations, reduced order modeling, $n$-widths, nonlinear approximation, solution set, variable-domain PDEs}
\begin{abstract}
	Recently, Meta-Auto-Decoder (MAD) was proposed as a novel reduced order model (ROM) for solving parametric partial differential equations (PDEs), 
	and the best possible performance of this method can be quantified by the decoder width.  
	This paper aims to provide a theoretical analysis related to the decoder width. 
	The solution sets of several parametric PDEs are examined, and
	the upper bounds of the corresponding decoder widths are estimated. 
	In addition to the elliptic and the parabolic equations on a fixed domain, we investigate the advection equations that present challenges for classical linear ROMs, 
	as well as the elliptic equations with the computational domain shape as a variable PDE parameter. 
	The resulting fast decay rates of the decoder widths indicate the promising potential of MAD in addressing these problems. 
\end{abstract}

\maketitle

\section{Introduction}\label{sec:intro}
Many important phenomena in science and engineering can be modeled by the so-called parametric partial differential equations (PDEs), which can be formulated as
\begin{equation}\label{eq:def_pPDE}
	\mathcal{L}_{\widetilde{\pmb{x}}}^{\gamma_1} u = 0, \ {\widetilde{\pmb{x}}} \in \Omega \subset \R^d,\qquad
	\mathcal{B}_{\widetilde{\pmb{x}}}^{\gamma_2} u = 0, \ {\widetilde{\pmb{x}}} \in \partial \Omega
.\end{equation}
Here, $\mathcal{L}^{\gamma_1}$ and $\mathcal{B}^{\gamma_2}$ are differential operators parametrized by $\gamma_1$ and $\gamma_2$, respectively, and ${\widetilde{\pmb{x}}}$ denotes the independent variable in spatiotemporal-dependent PDEs. 
Let $\mathcal{A}$ be the space of parameters. 
For a given set of parameters of the PDEs $\eta=(\gamma_1,\gamma_2,\Omega) \in \mathcal{A}$, we assume~\eqref{eq:def_pPDE} to admit a unique solution $u^\eta\in\mathcal{U}(\Omega;\R^{d_u})$, 
where $\mathcal{U}(\Omega;\R^{d_u})$ is a Banach space of functions defined on $\Omega$. 
In the simpler case when the computational domain $\Omega$ is fixed, we shall denote for simplicity $\mathcal{U}=\mathcal{U}(\Omega;\R^{d_u})$. 
Taking $S:\eta\mapsto u^\eta$ to be the solution mapping, the set of solutions is then given by $S(\mathcal{A})=\{u^\eta\mid\eta\in\mathcal{A}\}$. 

Solving the parametric PDEs~\eqref{eq:def_pPDE} quickly for a series of $\eta$'s 
is crucial in many application domains, such as inverse problems~\cite{Fan2020SolvingEI}, inverse design~\cite{Lye2021ISMO}, optimal control~\cite{Chen2023BenchmarkingDO} and uncertainty quantification~\cite{Thanh2012ExtremeSU,Khoo2021SolvingPP}.
In order to obtain an efficient solver for the parametric PDEs, the reduced order modeling technique aims to construct a suitable low-dimensional trial manifold containing (at least approximately) the solution set $S(\mathcal{A})$. 
By restricting the original system~\eqref{eq:def_pPDE} or its discretized version to this trial manifold, a reduced order model (ROM) is produced, whose low-dimensional nature would enable faster solving. 
However, taking into account the intrinsic complexity of the solution set, the solution provided by the ROM may not achieve the desired accuracy, especially when the dimensionality of the trial manifold is insufficient. 
The concept of \emph{widths} 
is then introduced to quantify the best possible accuracy using an $n$-dimensional trial manifold. 
Different types of trial manifolds are considered in different ROMs, and the definitions of widths would vary accordingly. 

A widespread family of ROMs, such as reduced basis methods (RBMs)~\cite{Quarteroni2015ReducedBM}, utilize only those trial manifolds that are linear subspaces, 
and represent the approximate solution as a linear combination of basis functions 
$\hat u=\sum_{i=1}^{n}\lambda_iu_i.$
The Kolmogorov width~\eqref{eq:KolWidth} serves as a theoretical tool for analyzing such ROMs. 
As the solution set of advection-dominated parametric PDEs typically exhibits a slow decaying Kolmogorov width~\cite{Ohlberger16ReducedBM,Greif2019DecayKN}, 
these linear ROMs would struggle to balance between efficiency and accuracy when solving these equations, and this has become a long-standing challenge in the field of parametric PDEs. 

Making use of the nonlinear representation of the neural networks (NNs), 
many novel ROMs are proposed~\cite{Lee2020ModelRD,Fresca2021AComprehensiveDL,Fresca2022DeepLB,Gruber22AComparisonNN,Chen2022CROMCR,Ye2023MetaAD}. 
Specifically, the recently proposed ROM named Meta-Auto-Decoder (MAD)~\cite{Huang2022MetaAD,Ye2023MetaAD} seeks to construct a Lipschitz continuous mapping $D:Z\to\mathcal{U}$, where $Z=\R^n$ represents the latent space. 
Denoting $Z_B=\{\pmb{z}\mid\|\pmb{z}\|_2\le 1\}$ to be the closed unit ball of $Z$, MAD takes $D(Z_B)$ as the trial manifold, and solves~\eqref{eq:def_pPDE} by searching on this trial manifold (MAD-L) or possibly in a neighborhood of it (MAD-LM). 
The corresponding width associated with this novel ROM is termed the \emph{decoder width}, which will be further introduced in Section~\ref{sec:decWidth}. 
In contrast to the Kolmogorov width, the decoder width exhibits a much faster decay for certain advection equations (Section~\ref{sec:advVC}),
indicating the superiority of MAD over classical linear ROMs. 

Moreover, MAD is mesh-free since it
represents the decoder mapping $D:Z\to\mathcal{U}$ as $z\mapsto u_\theta(\cdot,z)$, where $u_\theta$ can be modeled \verB{(in the simplest case)} as a neural network that takes the concatenation of $\widetilde{\pmb{x}}$ and $\pmb{z}$ as input. 
On the contrary, typical ROMs are based on classical numerical methods such as finite difference method (FDM) and finite element method (FEM), in which the computational domain $\Omega$ is often discretized, 
and we cannot evaluate the solution $u^\eta$ to~\eqref{eq:def_pPDE} at arbitrary spatiotemporal points $\widetilde{\pmb{x}}$ without interpolation. 
In the case of
parametric PDEs on a variable domain, i.e. those including $\Omega$ as part of the variable parameter $\eta$, further convenience would be brought by the mesh-free nature. 
Typical discretization-based ROMs need to assume the existence of a \emph{reference domain} $\Omega^\text{ref}$, and associate to each possible $\Omega$ with an explicit domain transformation $T_\Omega:\Omega^\text{ref}\to\Omega$, 
which could be hard to design by hand for complex-shaped domains, and has the additional prerequisite that no topology changes in $\Omega$ should occur. 
These are no longer necessary for the MAD method, as any $\widetilde{\pmb{x}}\in\R^d$ can be taken as the input of a neural network. 
The corresponding notion of decoder width can be introduced by using a \emph{master domain} $\Omega^\text{m}$, as will be further explained in Section~\ref{sec:decWidthVD}. 
To the best of our knowledge, this is the first notion of width that can deal with domain deformation without using a reference domain. 


The current manuscript primarily focuses on analyzing the decoder width,
which quantifies the best possible performance of the MAD method. 
In comparison to the stable manifold width~\eqref{eq:SManiWidth}, the decoder width does not exceed its value (Proposition~\ref{prop:decoVSsmani}), and the difference is sometimes real (Example~\ref{eg:decoVSsmani1}). 
We also estimate the decoder widths of the solution sets of several common parametric PDEs. 
For the elliptic and the parabolic equations, including those on variable domains, the decoder widths would become zero for finite values of $n$ if $\mathcal{A}$ is finite-dimensional (Example~\ref{eg:ellipEg1}, \ref{eg:parabEg1}, \ref{eg:eVDmovingDisk} and~\ref{eg:eVDdefmHole}). 
Even when $\mathcal{A}$ is infinite-dimensional, the decoder width can still exhibit a fast decay rate, depending on the internal complexity of $\mathcal{A}$ (Example~\ref{eg:parabEg2} and~\ref{eg:eVDctvc}). 
In the case of the advection equations, an exponential decay rate is guaranteed when using the $L^2$-norm (Example~\ref{eg:advEg1}), 
and the decoder width may become zero if the $L^1$-norm is employed instead (Example~\ref{eg:advEg1_L1}). 
We also compare the decay rates of the decoder width with that of the Kolmogorov width and the manifold width in Table~\ref{tab:compWidthEqn}. 
These results demonstrate the promising potential of the MAD method in solving these parametric PDEs, 
and motivate the choice of the latent dimension $n$ in its implementation. 

The rest of the paper is orgnized as follows. 
In Section~\ref{sec:relWork} we provide a brief review of the related work, including the definitions of existing widths. 
In Section~\ref{sec:decWidth}, 
we present the definition of the decoder width along with its fundamental properties, covering both the fixed domain case and the variable domain case. 
In Section~\ref{sec:decWidthEst}, we examine the solution sets of several parametric PDEs, and estimate the upper bounds of the corresponding decoder widths. 
We shall only state the conclusions in Section~\ref{sec:decWidth} and \ref{sec:decWidthEst}, and defer all the proofs till Section~\ref{sec:proofs}. 
Our conclusions as well as further discussions will be given in Section~\ref{sec:conclusion}. 

\begin{table}[tb]
\hspace*{-\textwidth}
\begin{threeparttable}
	\centering
	\caption{A comparison of the decay rate of different widths, taking $\mathcal{K}$ as the solution sets of several parametric PDEs (on a fixed domain $\Omega$). We assume $\mathcal{A}$ has finite-dimensionality $\mathsf{d}^\mathcal{A}=\dim\mathcal{A}<\infty$. A ``--'' means the result is currently unknown to the authors. }
	\label{tab:compWidthEqn}
	\begin{tabular}{cccc}
	\hline
	PDE type & Kolmogorov width $d_n$ & manifold width $d_n^\text{Mani}$ & decoder width $d_{n,l}^\text{Deco}$\\
	\hline
	elliptic & $O(n\exp(-cn^{1/\mathsf{d}^\mathcal{A}}))$~\cite{Tran17AnalysisQO} & 0 as $n\ge \mathsf{d}^\mathcal{A}$~\cite{Franco2021DeepLA} & 0 as $n\ge \mathsf{d}^\mathcal{A}$ (Eg.\ref{eg:ellipEg1})\\
	parabolic & -- & -- & 0 as $n\ge \mathsf{d}^\mathcal{A}$ (Eg.\ref{eg:parabEg1})\\
	advection ($L^1$) & $O(1/n)$~\cite{Petrova2022LipschitzW} & -- & 0 as $n\ge \mathsf{d}^\mathcal{A}$ (Eg.\ref{eg:advEg1_L1})\\
	advection ($L^2$) & $O(1/\sqrt n)$~\cite{Ohlberger16ReducedBM} & $O(e^{-cn})$\tnote{*} &  $O(e^{-cn})$ (Eg.\ref{eg:advEg1})\\
	\hline
	\end{tabular}
	\begin{tablenotes}
		\item[*]{The proof is the same as Example~\ref{eg:advEg1} if we apply~\ref{eq:entSMani} instead of Corollary~\ref{cor:decoVSentNum}. }
	\end{tablenotes}
\end{threeparttable}
\hspace*{-\textwidth}
\end{table}

\section{Related Work}\label{sec:relWork}
\subsection{Widths and Parametric PDEs}
Let $\mathcal{U}$ be a Banach space, and $\mathcal{K}\subset\mathcal{U}$ be a compact subset. 
If we aim at approximating $\mathcal{K}$ using a linear trial manifold as in classical ROMs, 
then the best possible accuracy can be measured by the well-known Kolmogorov width~\cite{Kolmogoroff1936UberDB}. 
To be more specific, given a linear subspace $U_n\subset\mathcal{U}$ of dimension $n$, its performance in approximating the elements of $\mathcal{K}$ is evaluated by the worst case error
\begin{equation*}
\sup_{u\in\mathcal{K}}d_{\mathcal{U}}(u,U_n) = \sup_{u\in\mathcal{K}}\inf_{v\in U_n}\|u-v\|_{\mathcal{U}} .
\end{equation*}
The \emph{Kolmogorov width} is then defined by taking infimum over all such linear subspaces
\begin{equation}\label{eq:KolWidth}
	d_n(\mathcal{K})_\mathcal{U}=\inf_{U_n}\sup_{u\in\mathcal{K}}\inf_{v\in U_n}\|u-v\|_{\mathcal{U}} 
.\end{equation}
We may write $d_n(\mathcal{K})=d_n(\mathcal{K})_\mathcal{U}$ for simplicity when $\mathcal{U}$ is clear from the context, and this applies to all notions of widths to be introduced below.
In the context of parametric PDEs, the case of particular interest is taking $\mathcal{K}=S(\mathcal{A})$ to be the solution set. 
For the elliptic equations, the corresponding Kolmogorov width of the solution set exhibits rapid decay~\cite{Cohen2010AnalyticRA,Tran17AnalysisQO}, indicating that conventional linear ROMs can achieve a satisfactory accuracy with a reasonable latent dimension $n$. 
However, the case is quite different for advection-dominant problems~\cite{Ohlberger16ReducedBM,Greif2019DecayKN}, 
posing significant performance challenges for linear ROMs. 

In light of the limitations of the linear ansatz, nonlinear alternatives to the Kolmogorov width are considered, and the manifold width proposed in~\cite{DeVore1989OptimalNA} is a noticeable candidate. 
Let $Z=\R^n$ be a fixed $n$-dimensional latent space, and $E:\mathcal{K}\to Z$, $D:Z\to\mathcal{U}$ be two continuous mappings, we consider the worst case reconstruction error given by
\begin{equation*}
\sup_{u\in\mathcal{K}}\|u-D(E(u))\|_{\mathcal{U}}=\sup_{u\in\mathcal{K}}\|u-D(E(u))\|_{\mathcal{U}}
,\end{equation*}
and the \emph{manifold width} is defined by taking infimum over all possible mapping pairs $(E,D)$:
\begin{equation}\label{eq:ManiWidth}
d_n^{\mathrm{Mani}}(\mathcal{K})=\inf_{E,D}\sup_{u\in\mathcal{K}}\|u-D(E(u))\|_{\mathcal{U}} .
\end{equation}
For the solution set of certain elliptic equations, this width is known to become zero even for finite $n$~\cite{Franco2021DeepLA}. 
If we further require these two mappings to be $l$-Lipschitz continuous
, a variant named the \emph{stable manifold width}~\cite{Cohen2022OptimalSN} is then obtained as%
\footnote{The original definition given in~\cite{Cohen2022OptimalSN} actually takes infimum over all possible norms $\|\cdot\|_Z$ on $Z$ along with $E,D$. 
Here we choose a fixed norm $\|\cdot\|_Z=\|\cdot\|_2$ as in the decoder width~\eqref{eq:decWidth}, which won't make a difference for our purpose, and~\eqref{eq:entSMani} is still valid. }
\begin{equation}\label{eq:SManiWidth}
d_{n,l}^{\mathrm{SMani}}(\mathcal{K})=\inf_{E,D\ l\text{-Lip}}\sup_{u\in\mathcal{K}}\|u-D(E(u))\|_{\mathcal{U}} .
\end{equation}
We note that some recently proposed ROMs~\cite{Lee2020ModelRD,Fresca2021AComprehensiveDL,Fresca2022DeepLB,Gruber22AComparisonNN,Chen2022CROMCR} find the trial manifold by training a convolutional autoencoder, and these notions of widths can serve as a quantified criterion for their best possible performance. 

Another notion termed the entropy number, though not named as a ``width'', is also noteworthy due to its close relationship to these concepts. 
Given an integer $n>0$, the \emph{entropy number} is defined to be the infimum of all $\epsilon>0$ such that $2^n$ balls of radius $\epsilon$ cover $\mathcal{K}$~\cite{Carl1981EntropyNS}, or equivalently
\begin{equation}\label{eq:ent_num_def}
	\epsilon_n(\mathcal{K})=\inf_{V_n}\sup_{u\in \mathcal{K}}\inf_{v\in V_n}\|u-v\|_\mathcal{U}
,\end{equation}
where the first infimum is taken over all subsets $V_n\subset\mathcal{U}$ containing no more than $2^n$ points. 
If the additional restriction $V_n\subset\mathcal{K}$ is imposed, we obtain the so-called \emph{inner entropy number} $\tilde\epsilon_n(\mathcal{K})$, 
and $\epsilon_n(\mathcal{K})\le\tilde\epsilon_n(\mathcal{K})\le 2\epsilon_n(\mathcal{K})$ holds. 
When $\mathcal{U}$ is a Hilbert space, \cite[Theorem~4.1]{Cohen2022OptimalSN} asserts the inequality
\begin{equation}\label{eq:entSMani}
d^\text{SMani}_{26n,2}(\mathcal{K})\le 3\epsilon_n(\mathcal{K})
.\end{equation}

More variants of widths exist in the realm of metric geometry, including the Alexandroff width~\cite{Alexandroff1933berDU} and the Urysohn width~\cite{Gromov88WidthRI}. 
The interested readers may refer to~\cite{DeVore93WaveletCN} for a further discussion on this topic. 

\subsection{PDEs on a Variable Domain}
Some parametric PDEs may take the shape of the computational domain as the variable parameter, i.e. $\eta=\Omega$, 
which have attracted attention due to both theoretical and engineering interests. 
Solving shape optimization problems typically requires to apply sensitivity analysis to the solutions to~\eqref{eq:def_pPDE}. 
To be more specific, we consider a class of domains $\Omega_\epsilon$ parametrized by a small parameter $|\epsilon|\ll 1$, and the derivative of $u^{\Omega_\epsilon}$ (after transformed to a reference domain) with respect to $\epsilon$ is analyzed~\cite{Haug1988DesignSA,Sokolowski1992IntroductionSO,Marius2003LinearEB}. 
Another bunch of work investigates scenarios where the domain is perturbed singularly. 
Given a series of domains $\Omega_n$ approaching $\Omega$, the convergence of the corresponding solutions $u^{\Omega_n}$ is studied~\cite{Dancer1997DomainPE,Daners2008DomainPL,Druet2013StabilityEP}, and the solutions on different domains are compared via direct restrictions to subdomains. 
However, these works primarily focus on the limiting behavior of a varying domain.
In contrast, the concept of widths is related to the whole solution set $S(\mathcal{A})=S(\{\Omega\})$, which takes into account the entire distribution of all possible domains. 
The main challenge in analyzing widths in this context arises from the fact that the solutions involved are defined on different domains and, consequently, belong to different function spaces. 
Transforming all these solutions to a fixed reference domain appears to be one resolution to this issue~\cite{Quarteroni2015ReducedBM}, but lacks flexibility as we have discussed in Section~\ref{sec:intro}. 
To the best of our knowledge, we are the first to introduce a version of width for variable-domain PDEs without involving a reference domain (Section~\ref{sec:decWidthVD}), 
and provide a theoretical estimation of it for several special cases (Section~\ref{sec:ellpVD}).

\section{Definition and Basic Properties of the Decoder Width}\label{sec:decWidth}
For the rest of this paper, we denote $\|\cdot\|_{L^p}$ to be the $L^p$-norm of a function, and $\|\cdot\|_{H_0^1}$ similarly. 
The $\ell^p$-norm of a vector or matrix is denoted as $\|\cdot\|_p$, making
$\ell^p(\R^d)=(\R^d,\|\cdot\|_p)$ to be a finite-dimensional Banach space, as well as $\ell^p(\R^{d\times d})=(\R^{d\times d},\|\cdot\|_p)$ endowed with the matrix norm. 
If $\mathcal{V}$ is a Banach space, 
\begin{equation}\label{eq:defBR}
	\bar B_R(\mathcal{V})=\{v\in\mathcal{V}\mid\|v\|_{\mathcal{V}}\le R\}
\end{equation}
represents the closed ball of radius $R>0$ in $\mathcal{V}$. 

\subsection{The Decoder Width on a Fixed Domain}\label{sec:decWidthFD}
We recall the notion of the decoder width initially introduced in the context of the MAD method~\cite{Ye2023MetaAD}. 
Given a fixed domain $\Omega$, $\mathcal{U}=\mathcal{U}(\Omega;\R^{d_u})$ is a fixed Banach space, and we are interested in the approximation of a compact subset $\mathcal{K}\subset\mathcal{U}$. 
MAD constructs the trial manifold by using a decoder mapping $D:Z\to\mathcal{U}$ similar to the autoencoders, but does not involve the encoder mapping $E:\mathcal{K}\to Z$. 
The corresponding \emph{decoder width} would thereby be different from the stable manifold width~\eqref{eq:SManiWidth}, and is defined as
\begin{equation}\label{eq:decWidth}\begin{split}
	d_{n,l}^\mathrm{Deco}(\mathcal{K})&=\inf_{D\ l\text{-Lip}}\sup_{u\in\mathcal{K}}d_{\mathcal{U}}(u,\{D(\pmb{z})\mid\|\pmb{z}\|_2\le 1\})
	\\&=\inf_{D\ l\text{-Lip}}\sup_{u\in\mathcal{K}}\inf_{\pmb{z}\in Z_B}\|u-D(\pmb{z})\|_\mathcal{U}
,\end{split}\end{equation}
where the first infimum is taken over all mappings $D:Z\to\mathcal{U}$ that are $l$-Lipschitz continuous. 
The constraint $\|\pmb{z}\|_2\le 1$ and the Lipschitz continuity condition are required to avoid highly irregular mappings resembling the space-filling curves, which will be further discussed in Section~\ref{sec:conclusion}. 

For the case of a fixed domain, the decoder width is very similar to the Lipschitz width proposed in~\cite{Petrova2022LipschitzW}. 
In the definition of the Lipschitz width, the latent space $Z=\R^n$ is assumed to be a general finite-dimensional Banach space, and an additional infimum over all possible norms $\|\cdot\|_Z$ on $Z$ is taken before the first infimum in~\eqref{eq:decWidth}, 
that is, 
\[d_{n,l}^\mathrm{Lip}(\mathcal{K})=\inf_{\|\cdot\|_Z}\inf_{D\ l\text{-Lip}}\sup_{u\in\mathcal{K}}\inf_{\pmb{z}\in Z_B}\|u-D(\pmb{z})\|_\mathcal{U} .\]
However, the decoder width sticks to the $\ell^2$-norm on $Z$. 
As the $\ell^2$-norm is typically more convenient for numerical optimization, we take the decoder width as the more suitable notion of width related to the MAD method. 
Indeed, it is worth noting that
the Lipschitz width is not greater than the decoder width, and any upper bound derived for the decoder width automatically serves as an upper bound for the Lipschitz width as well. 

In order to give an analysis of the decoder width, we find it useful to introduce the concept of the \emph{restricted decoder width} $d_{n,l}^\text{rDeco}(\mathcal{K})$, which shares the same expression as~\eqref{eq:decWidth} but with the first infimum taken over all $l$-Lipschitz mappings $D:Z\to\mathcal{K}$. 
In other words, it imposes the additional requirement that the image of $D$ should lie inside $\mathcal{K}$, and we have the obvious inequality $d_{n,l}^\text{Deco}(\mathcal{K})\le d_{n,l}^\text{rDeco}(\mathcal{K})$. 

The following properties will be useful for the analysis of the decoder width: 
\begin{prop}\label{prop:decWidthLip}
	Assume $\mathcal{K}=\mathcal{F(K_V)}$, where
	$\mathcal{K}_\mathcal{V}\subset\mathcal{V}$ is a compact subset,
	$\mathcal{V}$ is another Banach space,
	and $\mathcal{F:K_V\to U}$ is an $L_\mathcal{F}$-Lipschitz mapping.
	Then we have
	$d_{n,L_\mathcal{F}l}^\text{rDeco}(\mathcal{K})\le L_\mathcal{F}d_{n,l}^\text{rDeco}(\mathcal{K}_\mathcal{V}) .$
	The obvious corollary is
	$d_{n,L_\mathcal{F}l}^\text{Deco}(\mathcal{K})\le L_\mathcal{F}d_{n,l}^\text{rDeco}(\mathcal{K}_\mathcal{V}) .$
\end{prop}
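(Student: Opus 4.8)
The plan is to prove the inequality by \emph{composing decoders}. First I would fix an arbitrary $l$-Lipschitz map $D_\mathcal{V}\colon Z\to\mathcal{K}_\mathcal{V}$; such maps exist (any constant map into the nonempty set $\mathcal{K}_\mathcal{V}$ qualifies), and they are exactly the competitors appearing in the definition of $d_{n,l}^\text{rDeco}(\mathcal{K}_\mathcal{V})$. The crucial point is that the image of $D_\mathcal{V}$ lies in $\mathcal{K}_\mathcal{V}$, which is precisely the domain on which $\mathcal{F}$ is defined, so the composition $D:=\mathcal{F}\circ D_\mathcal{V}\colon Z\to\mathcal{U}$ makes sense; its image lies in $\mathcal{F}(\mathcal{K}_\mathcal{V})=\mathcal{K}$, and it is $(L_\mathcal{F}l)$-Lipschitz as the composition of an $L_\mathcal{F}$-Lipschitz map with an $l$-Lipschitz map. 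Hence $D$ is an admissible competitor for $d_{n,L_\mathcal{F}l}^\text{rDeco}(\mathcal{K})$. (This is exactly where the \emph{restricted} decoder width is needed on the $\mathcal{V}$-side: a general decoder with image merely in $\mathcal{V}$ could not be post-composed with $\mathcal{F}$.)

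Second, I would carry out the error estimate. Given $u\in\mathcal{K}$, pick $v\in\mathcal{K}_\mathcal{V}$ with $\mathcal{F}(v)=u$. For every $\pmb z\in Z_B$ the Lipschitz bound on $\mathcal{F}$ gives $\|u-D(\pmb z)\|_\mathcal{U}=\|\mathcal{F}(v)-\mathcal{F}(D_\mathcal{V}(\pmb z))\|_\mathcal{U}\le L_\mathcal{F}\|v-D_\mathcal{V}(\pmb z)\|_\mathcal{V}$, and taking the infimum over $\pmb z\in Z_B$ yields $\inf_{\pmb z\in Z_B}\|u-D(\pmb z)\|_\mathcal{U}\le L_\mathcal{F}\inf_{\pmb z\in Z_B}\|v-D_\mathcal{V}(\pmb z)\|_\mathcal{V}$. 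Since every $u\in\mathcal{K}$ is of the form $\mathcal{F}(v)$ with $v\in\mathcal{K}_\mathcal{V}$, taking the supremum over $u\in\mathcal{K}$ (equivalently, over $v\in\mathcal{K}_\mathcal{V}$) gives $\sup_{u\in\mathcal{K}}\inf_{\pmb z\in Z_B}\|u-D(\pmb z)\|_\mathcal{U}\le L_\mathcal{F}\sup_{v\in\mathcal{K}_\mathcal{V}}\inf_{\pmb z\in Z_B}\|v-D_\mathcal{V}(\pmb z)\|_\mathcal{V}$.

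Finally, since $D$ is admissible for $d_{n,L_\mathcal{F}l}^\text{rDeco}(\mathcal{K})$, the left-hand side above dominates $d_{n,L_\mathcal{F}l}^\text{rDeco}(\mathcal{K})$; as the resulting inequality $d_{n,L_\mathcal{F}l}^\text{rDeco}(\mathcal{K})\le L_\mathcal{F}\sup_{v\in\mathcal{K}_\mathcal{V}}\inf_{\pmb z\in Z_B}\|v-D_\mathcal{V}(\pmb z)\|_\mathcal{V}$ holds for every admissible $D_\mathcal{V}$, taking the infimum over all $l$-Lipschitz $D_\mathcal{V}\colon Z\to\mathcal{K}_\mathcal{V}$ produces $d_{n,L_\mathcal{F}l}^\text{rDeco}(\mathcal{K})\le L_\mathcal{F}d_{n,l}^\text{rDeco}(\mathcal{K}_\mathcal{V})$. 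The corollary $d_{n,L_\mathcal{F}l}^\text{Deco}(\mathcal{K})\le L_\mathcal{F}d_{n,l}^\text{rDeco}(\mathcal{K}_\mathcal{V})$ is then immediate from the inequality $d_{n,l}^\text{Deco}(\mathcal{K})\le d_{n,l}^\text{rDeco}(\mathcal{K})$ already recorded above. There is no serious obstacle here; the only point that requires care is the well-definedness of $\mathcal{F}\circ D_\mathcal{V}$ together with keeping the two infima (over $D_\mathcal{V}$ and over $\pmb z$) and the supremum over $\mathcal{K}$ in the correct order, which the argument above handles by working with a fixed but arbitrary $D_\mathcal{V}$ before passing to the infimum.
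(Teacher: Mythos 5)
Your proposal is correct and follows essentially the same route as the paper's proof: you fix an arbitrary admissible $l$-Lipschitz decoder $D_\mathcal{V}\colon Z\to\mathcal{K}_\mathcal{V}$, form the composed decoder $\mathcal{F}\circ D_\mathcal{V}$ (noting that it is $(L_\mathcal{F}l)$-Lipschitz with image in $\mathcal{K}$), push the Lipschitz bound through the $\inf$ over $\pmb z$ and the $\sup$ over $\mathcal{K}=\mathcal{F}(\mathcal{K}_\mathcal{V})$, and finally take the infimum over $D_\mathcal{V}$. Your remarks about why the restricted width is needed on the $\mathcal{V}$-side and about the order of the quantifiers are both accurate, though the paper leaves them implicit.
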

\begin{prop}\label{prop:decoVSsmani}
	Assume the compact subset $\mathcal{K}$ has diameter $\delta_{\mathcal{K}}=\mathrm{diam}(\mathcal{K})=\sup_{u,v\in\mathcal{K}}\|u-v\|_{\mathcal{U}}<+\infty$. Then
	$
	d_{n,\delta_{\mathcal{K}} l^2}^\mathrm{Deco}(\mathcal{K})\le d_{n,l}^\mathrm{SMani}(\mathcal{K}) .
	$
\end{prop}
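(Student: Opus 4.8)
The plan is to start from a near-optimal encoder--decoder pair for the stable manifold width and turn it into an admissible competitor for the decoder width by absorbing the encoder into a rescaled decoder. Fix $\epsilon>0$ and choose $l$-Lipschitz mappings $E:\mathcal{K}\to Z$ and $D:Z\to\mathcal{U}$ with $\sup_{u\in\mathcal{K}}\|u-D(E(u))\|_{\mathcal{U}}\le d_{n,l}^{\mathrm{SMani}}(\mathcal{K})+\epsilon$. The only obstruction to feeding $D$ directly into the definition~\eqref{eq:decWidth} is that $E(\mathcal{K})$ need not be contained in the unit ball $Z_B$; but since $E$ is $l$-Lipschitz and $\mathrm{diam}(\mathcal{K})=\delta_{\mathcal{K}}$, for any fixed $u_0\in\mathcal{K}$ and every $u\in\mathcal{K}$ we have $\|E(u)-E(u_0)\|_2\le l\|u-u_0\|_{\mathcal{U}}\le l\delta_{\mathcal{K}}$, so $E(\mathcal{K})\subseteq\{\pmb{z}\in Z\mid\|\pmb{z}-E(u_0)\|_2\le l\delta_{\mathcal{K}}\}$.

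Next I would compose with an affine rescaling that carries $Z_B$ onto this ball. Define $\psi:Z\to Z$ by $\psi(\pmb{w})=l\delta_{\mathcal{K}}\,\pmb{w}+E(u_0)$; with respect to the $\ell^2$-norm on $Z$ (the norm used in both widths, as fixed in the footnote to~\eqref{eq:SManiWidth}) this $\psi$ is $l\delta_{\mathcal{K}}$-Lipschitz and satisfies $\psi(Z_B)=\{\pmb{z}\in Z\mid\|\pmb{z}-E(u_0)\|_2\le l\delta_{\mathcal{K}}\}\supseteq E(\mathcal{K})$. Set $\tilde{D}=D\circ\psi:Z\to\mathcal{U}$, which is then $l\cdot l\delta_{\mathcal{K}}=\delta_{\mathcal{K}}l^2$-Lipschitz, hence an admissible decoder for $d_{n,\delta_{\mathcal{K}}l^2}^{\mathrm{Deco}}(\mathcal{K})$. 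For each $u\in\mathcal{K}$ the point $\pmb{w}_u:=(E(u)-E(u_0))/(l\delta_{\mathcal{K}})$ lies in $Z_B$ and satisfies $\tilde{D}(\pmb{w}_u)=D(E(u))$, so $\inf_{\pmb{z}\in Z_B}\|u-\tilde{D}(\pmb{z})\|_{\mathcal{U}}\le\|u-D(E(u))\|_{\mathcal{U}}$. Taking the supremum over $u\in\mathcal{K}$ gives $d_{n,\delta_{\mathcal{K}}l^2}^{\mathrm{Deco}}(\mathcal{K})\le d_{n,l}^{\mathrm{SMani}}(\mathcal{K})+\epsilon$, and letting $\epsilon\to0$ closes the argument.

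I do not anticipate a genuine obstacle: the core of the proof is just bookkeeping of Lipschitz constants, and the one point that needs attention is precisely that the latent-space norms in the two widths agree, which is why $\psi$ has Lipschitz constant exactly $l\delta_{\mathcal{K}}$. The remaining care goes into the degenerate cases $l=0$ or $\delta_{\mathcal{K}}=0$, where $\psi$ is not defined; in either case constant maps are forced on both sides and the reconstruction (or $\mathcal{K}$ itself) collapses to a single point, so both widths can be read off directly and the inequality is trivial.
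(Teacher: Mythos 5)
Your proof is correct and follows essentially the same route as the paper's: pick a near-optimal encoder--decoder pair for the stable manifold width, observe that $E(\mathcal{K})$ lies in a ball of radius $l\delta_{\mathcal{K}}$, and compose $D$ with the affine map carrying $Z_B$ onto that ball to get a $\delta_{\mathcal{K}}l^2$-Lipschitz decoder. The only cosmetic differences are that the paper works directly with the infimum rather than an explicit $\epsilon$, and your extra remark about the degenerate cases $l=0$ or $\delta_{\mathcal{K}}=0$ is a small completeness bonus the paper omits.
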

\begin{cor}\label{cor:decoVSentNum}
	If $\mathcal{U}$ is a Hilbert space, and $\mathcal{K}$ has diameter $\delta_{\mathcal{K}}=\mathrm{diam}(\mathcal{K})<+\infty$, then we shall have
	$d^\text{Deco}_{26n,4\delta_{\mathcal{K}}}(\mathcal{K})\le 3\epsilon_n(\mathcal{K}) .$
\end{cor}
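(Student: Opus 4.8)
The plan is to chain together the two results immediately preceding the statement. Since $\mathcal{U}$ is a Hilbert space and $\mathcal{K}$ is compact, the stable manifold width inequality~\eqref{eq:entSMani} from~\cite[Theorem~4.1]{Cohen2022OptimalSN} applies verbatim: for any integer $n>0$ we have $d^{\mathrm{SMani}}_{26n,2}(\mathcal{K})\le 3\epsilon_n(\mathcal{K})$. The idea is then to feed this into Proposition~\ref{prop:decoVSsmani}, which compares the decoder width against the stable manifold width at the cost of inflating the Lipschitz constant by a factor $\delta_{\mathcal{K}}l^2$.

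The one point requiring care is matching the index and the Lipschitz parameters. Proposition~\ref{prop:decoVSsmani} reads $d^{\mathrm{Deco}}_{n,\delta_{\mathcal{K}}l^2}(\mathcal{K})\le d^{\mathrm{SMani}}_{n,l}(\mathcal{K})$, valid for each latent dimension $n$ and each Lipschitz bound $l$. The first step is to specialize this with the latent dimension taken to be $26n$ and with $l=2$, which gives $d^{\mathrm{Deco}}_{26n,\,4\delta_{\mathcal{K}}}(\mathcal{K})\le d^{\mathrm{SMani}}_{26n,2}(\mathcal{K})$, since $\delta_{\mathcal{K}}l^2=\delta_{\mathcal{K}}\cdot 2^2=4\delta_{\mathcal{K}}$. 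The second step is simply to combine this with~\eqref{eq:entSMani} (applied at the same $n$) to conclude $d^{\mathrm{Deco}}_{26n,\,4\delta_{\mathcal{K}}}(\mathcal{K})\le 3\epsilon_n(\mathcal{K})$, which is exactly the claim.

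There is essentially no obstacle here beyond bookkeeping: the corollary is a two-line deduction from Proposition~\ref{prop:decoVSsmani} and the cited Hilbert-space estimate~\eqref{eq:entSMani}. The only thing worth double-checking is that the hypotheses line up — namely that $\mathcal{K}$ having finite diameter $\delta_{\mathcal{K}}$ is precisely what Proposition~\ref{prop:decoVSsmani} requires, and that $\mathcal{U}$ being Hilbert is precisely what~\eqref{eq:entSMani} requires — both of which are among the stated hypotheses of the corollary. If one wanted to be slightly more careful about whether $\epsilon_n$ or $\tilde\epsilon_n$ appears, note that~\eqref{eq:entSMani} is stated with the (outer) entropy number $\epsilon_n$, so no conversion via the inequality $\epsilon_n\le\tilde\epsilon_n\le 2\epsilon_n$ is needed; it can be mentioned only as a remark if the inner entropy number is preferred in applications.
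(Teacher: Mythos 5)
Your proposal is correct and is exactly the argument the paper intends: the text explicitly states that Corollary~\ref{cor:decoVSentNum} follows directly by combining Proposition~\ref{prop:decoVSsmani} with~\eqref{eq:entSMani}, and your specialization to latent dimension $26n$ and $l=2$ (so $\delta_{\mathcal{K}}l^2=4\delta_{\mathcal{K}}$) is the intended bookkeeping.
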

Compared with the stable manifold width~\eqref{eq:SManiWidth}, the decoder width would not take a greater value according to Proposition~\ref{prop:decoVSsmani}. 
Here a strict inequality can appear, as is shown in the following simple example: 
\begin{eg}\label{eg:decoVSsmani1}
	Let $\mathcal{U}=\ell^2(\R^2)$ be the Euclidean plane, and $\mathcal{K}=\{u\in\mathcal{U}\mid\|u\|_2=1\}=S^1$ be the unit circle. 
	Then we have $d_{1,l}^\text{SMani}(\mathcal{K})\ge 1$ for any $l>0$, but $d_{1,\pi}^\text{Deco}(\mathcal{K})=0$. 
	See Figure~\ref{fig:deco_vs_mani} for an illustration. 
\end{eg}
\begin{figure}[tb]
	\centering
	\includegraphics[width=0.35\linewidth]{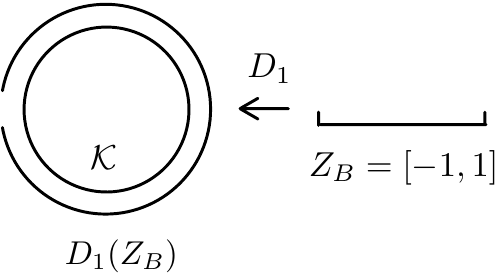}
	\caption{An illustration of Example~\ref{eg:decoVSsmani1}. With an appropriately chosen decoder mapping $D_1:Z\to\mathcal{U}$, $\mathcal{K}$ can be fitted perfectly by the trial manifold $D_1(Z_B)$, and the decoder width is therefore zero. }%
	\label{fig:deco_vs_mani}
\end{figure}
\begin{rmk}
	The Lipschitz width exhibits similar properties to the decoder width. 
	The inequality presented in Proposition~\ref{prop:decoVSsmani} also appears in~\cite[Theorem 5.1]{Petrova2022LipschitzW}, 
	and~\cite[Theorem 5.3]{Petrova2022LipschitzW} generalizes Example~\ref{eg:decoVSsmani1} to the case of all $n\ge 1$. 
	However, for the Lipschitz width, an inequality stronger than Corollary~\ref{cor:decoVSentNum} can be given (see~\cite[Theorem 3.3]{Petrova2022LipschitzW}), and the proof relies on choosing a norm different than $\ell^2$ (more specifically, $\ell^\infty$) on $Z$. 
\end{rmk}
Corollary~\ref{cor:decoVSentNum} can be derived directly by combining Proposition~\ref{prop:decoVSsmani} with~\eqref{eq:entSMani}. 
The proof of the other assertions above will be given in Section~\ref{sec:proofs}. 

\subsection{The Decoder Width on a Variable Domain}\label{sec:decWidthVD}
Another more complicated case of interest would involve a variable domain, rather than a fixed one. 
Now let $\mathcal{K}$ be a set of functions, and each element $u\in\mathcal{K}$ is associated with a domain $\Omega_u\subset\R^d$ such that $u\in\mathcal{U}(\Omega_u;\R^{d_u})$\footnote{Here $d_u>0$ is a fixed integer independent of $u$. }. 
This scenario arises when the shape of the computational domain $\Omega$ is part of the variable parameter in the parametric PDEs. 
In this case, the unique solution to~\eqref{eq:def_pPDE} for any PDE parameter $\eta=(\gamma_1,\gamma_2,\Omega)\in\mathcal{A}$ is denoted as $u^\eta$, and we have $\Omega_{u^\eta}=\Omega$. 
This case poses a challenge, as different elements of $\mathcal{K}$ may lie in different function spaces, and the distance between them is not readily defined. 
The natural resolution to this issue is to modify all these functions to fit into the same function space, which can be achived by using either a reference domain or a master domain. 

In the first case, we assume there is a \emph{reference domain} $\Omega^\text{ref}$, and every function $u\in\mathcal{K}$ is associated with a diffeomorphism $T_u:\Omega^\text{ref}\to\Omega_u$, which satisfies certain regularity conditions (depending on $\mathcal{U}$). 
Making use of this diffeomorphism, we may obtain another function on the reference domain
\begin{equation*}
\bar{u}(\widetilde{\pmb{x}})=u(T_u(\widetilde{\pmb{x}}))\in\mathcal{U}(\Omega^\text{ref};\R^{d_u}) .
\end{equation*}
The modified set of functions $\bar{\mathcal{K}}=\{\bar{u}(\widetilde{\pmb{x}})\mid u\in\mathcal{K}\}$ is now a subset of the function space $\mathcal{U}(\Omega^\text{ref};\R^{d_u})$, 
and all previous definitions of widths (including the decoder width) can be applied. 
This reformulation is convenient for classical mesh-based ROMs~\cite{Quarteroni2015ReducedBM}, but may lack flexibility as discussed in Section~\ref{sec:intro}. 

In the second case, we assume instead there is a \emph{master domain} $\Omega^\text{m}$ containing all possible $\Omega$'s as its subdomain, i.e., $\Omega_u\subseteq\Omega^\text{m}$ for any $u\in\mathcal{K}$. 
Using restrictions instead of domain transformations, 
the decoder width is now defined as
\begin{equation}\label{eq:decWidthVD}
	d_{n,l}^\mathrm{Deco}(\mathcal{K})=\inf_{D\ l\text{-Lip}}\sup_{u\in\mathcal{K}}\inf_{\pmb{z}\in Z_B}
	\Bigl\|u-D(\pmb{z})\vert_{\Omega_u}\Bigr\|_{\mathcal{U}(\Omega_u;\R^{d_u})}
,\end{equation}
where the first infimum is taken over all mappings $D:Z\to\mathcal{U}(\Omega^\text{m};\R^{d_u})$ that are $l$-Lipschitz continuous. 
Conceptually, it aims to approximate the target function $u$ by restricting the decoded function $D(\pmb{z})$ 
to the subdomain $\Omega_u\subseteq\Omega^\text{m}$ on which $u$ is defined. 
This alternative reformulation is prefered in the context of the MAD method, since we take neural networks as the mesh-free ansatz of PDE solutions, which can be evaluated for all $\widetilde{\pmb{x}}\in\R^d$. 

In order to estimate this type of decoder width of the solution set $S(\mathcal{A})$, we may seek to construct an extended solution mapping $\bar{S}:\mathcal{A\to U}(\Omega^\text{m};\R^{d_u}),\eta\mapsto\bar u^\eta$
which satisfies $\bar u^\eta|_\Omega=u^\eta$ for any $\eta=(\gamma_1,\gamma_2,\Omega)\in\mathcal{A}$. 
Let $\bar{\mathcal{K}}=\{\bar u^\eta\mid\eta\in\mathcal{A}\}=\bar{S}(\mathcal{A})$,
it is then easy to see that
$d_{n,l}^\mathrm{Deco}(\mathcal{K})\le d_{n,l}^\mathrm{Deco}(\bar{\mathcal{K}}) $
holds, since we have
\[ 
	\Bigl\|u^\eta-D(\pmb{z})|_{\Omega}\Bigr\|_{\mathcal{U}(\Omega;\R^{d_u})}
	=\Bigl\|(\bar u^\eta-D(\pmb{z}))|_{\Omega}\Bigr\|_{\mathcal{U}(\Omega;\R^{d_u})}
	\le\|\bar u^\eta-D(\pmb{z})\|_{\mathcal{U}(\Omega^\text{m};\R^{d_u})}
.\]
In this way, we have essentially converted the variable domain case to the simpler fixed domain case.

\section{Decoder Width Estimation for Several Parametric PDEs}\label{sec:decWidthEst}
In this section, we shall examine several common parametric PDEs, and estimate the decoder widths of the corresponding solution sets. 
The proofs of these assertions can be found in Section~\ref{sec:proofs}. 

Before diving into the rigorous statements and proofs, we shall first give a brief informal description of the intuition behind. 
For the elliptic and the parabolic equations on a fixed domain, the solution mappings are Lipschitz continuous, i.e. $\|u-\bar u\|\le C\|\eta-\bar\eta\|$ holds. 
By Proposition~\ref{prop:decWidthLip}, this would imply $d_{n,Cl}^\text{Deco}(\{u^\eta\})\le Cd_{n,l}^\text{rDeco}(\{\eta\})$. 
When the specific form of $\mathcal{A}=\{\eta\}$ is given (in the examples), the specific upper bounds for $d_{n,Cl}^\text{Deco}(\{u^\eta\})$ can be derived accordingly. 

In terms of the scenario involving a variable domain, we only present the results for the elliptic equations, and the case of the parabolic equations could be tackled in a similar manner. 
The overall strategy for deriving upper bounds remains consistent with the fixed-domain case, 
but additional efforts are required to establish the Lipschitz continuity of the solution mapping, since the variable PDE parameter $\eta$ now incorporates the shape of the computational domain $\Omega$.  
To address this, we first examine how the solution $u$ is affected by a small perturbation of $\Omega$ (Section~\ref{sec:ellip2domain}). 
A master domain $\Omega^\text{m}$ is predefined, and the perturbed domain $\Omega_2=\beta(\Omega)$ is given by an automorphism $\beta:\Omega^\text{m}\to\Omega^\text{m}$ that is close to the identity. 
Subsequently, we compare solutions on two arbitrary domains by introducing a sequence of intermediate domains, and conclude that the solution mapping is indeed Lipschitz continuous (Proposition~\ref{prop:eVDtransLip}). 

The case appears to be slightly different for the advection equations on a fixed domain, as the solution mapping is Lipschitz only when the function space $\mathcal{U}$ is endowed with the $L^1$-norm. 
If we switch to the $L^2$-norm that is more convenient for numerical computation, 
the solution mapping becomes only $\frac12$-H\"older continuous, i.e. $\|u-\bar u\|\le C\sqrt{\|\eta-\bar\eta\|}$. 
We would then recourse to the entropy number as the alternative tool for analysis, and show $\epsilon_n(\{u^\eta\})\le C\sqrt{\tilde\epsilon_n(\{\eta\})}$ (Lemma~\ref{lem:ent_Holder}). 
Since $d^\text{Deco}_{26n,4\delta}(\{u^\eta\})\le 3\epsilon_n(\{u^\eta\})$ holds according to Corollary~\ref{cor:decoVSentNum}, the specific form of the upper bounds can be established. 
\subsection{Elliptic Equations on a Fixed Domain}
Let $\Omega\subset\R^d$ be a bounded domain with Lipschitz continuous boundary, 
we consider the elliptic equation
\begin{equation}\label{eq:ellipFixDom}\begin{aligned}
	Lu&=f&\text{in }\Omega,
	\\u&=0&\text{on }\partial\Omega
,\end{aligned}\end{equation}
where the differential operator $L$ is defined as
\[
	Lu=-\nabla\cdot(\pmb{a}(\pmb{x})\nabla u(\pmb{x}))+c(\pmb{x})u(\pmb{x})
\]
with variable coefficients $\pmb{a}\in\Sigma^d_{r,R}(\Omega)$, $c\in \mathsf{D}^c$, $f\in \mathsf{D}_R^f$. 
Here
\begin{equation}\label{eq:defSigma_drR}\begin{split}
	\Sigma^d_{r,R}(\Omega)=\{\pmb{a}(\pmb{x})\mid&r\xi^\mathrm{T}\xi\le\xi^\mathrm{T}\pmb{a}(\pmb{x})\xi\le R\xi^\mathrm{T}\xi,
	\\&\pmb{a}(\pmb{x})^\mathrm{T}=\pmb{a}(\pmb{x})\in\R^{d\times d},\forall\xi\in\R^d, \pmb{x}\in\Omega\}
\end{split}\end{equation}
is a subset of $L^\infty(\Omega;\ell^2(\R^{d\times d}))$ with $0<r<R<\infty$, 
\[\mathsf{D}^c=\{c(\pmb{x})\in L^\infty(\Omega)\mid c(\pmb{x})\ge 0,\forall \pmb{x}\in\Omega\} ,\]
and $\mathsf{D}_R^f=\bar B_R(L^2(\Omega))$ using the notation~\eqref{eq:defBR}. 
The weak solution $u\in H^1_0(\Omega)$ to~\eqref{eq:ellipFixDom} should satisfy
\begin{equation}\label{eq:ellipFixDomWeak}
	\int_{\Omega}\left((\nabla u)^\mathrm{T}\pmb{a}\nabla v+cuv\right)\,\mathrm{d}\pmb{x}
	=\int_{\Omega}fv\,\mathrm{d}\pmb{x},\quad\forall v\in H^1_0(\Omega)
.\end{equation}
We have the following proposition:
\begin{prop}\label{prop:ellpLip}
	Let $\bar u\in H_0^1(\Omega)$ be the weak solution to~\eqref{eq:ellipFixDom}
	with alternative coefficients $\pmb{\bar a}\in\Sigma^d_{r,R}(\Omega)$, $\bar c\in \mathsf{D}^c$, $\bar f\in \mathsf{D}_R^f$. 
	Then we shall have
	\[\begin{split}
		\|u-\bar u\|_{H_0^1}
		\le \frac{C_\text{P}}{r^2}\Bigl(
			R\|\pmb{\bar a}-\pmb{a}\|_{L^\infty(\Omega;\ell^2(\R^{d\times d}))}
			+RC_\text{P}^2\|\bar c-c\|_{L^\infty}
		+r\|\bar f-f\|_{L^2} \Bigr)
	,\end{split}\]
	where $C_\text{P}$ is the Poincar\'e constant of $\Omega$. 
\end{prop}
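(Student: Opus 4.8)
The plan is a standard energy (Lax--Milgram-type) stability argument applied to the difference $w = u - \bar u \in H^1_0(\Omega)$, preceded by an a priori bound on $\|\bar u\|_{H^1_0}$; throughout I use that $\|v\|_{H^1_0}=\|\nabla v\|_{L^2}$.

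First I would subtract the weak formulation \eqref{eq:ellipFixDomWeak} satisfied by $\bar u$ (with coefficients $\pmb{\bar a},\bar c,\bar f$) from the one satisfied by $u$, and split the integrand so that the bilinear form of the \emph{unperturbed} coefficients acts on $w$: write $(\nabla u)^\mathrm{T}\pmb{a}\nabla v-(\nabla\bar u)^\mathrm{T}\pmb{\bar a}\nabla v=(\nabla w)^\mathrm{T}\pmb{a}\nabla v+(\nabla\bar u)^\mathrm{T}(\pmb{a}-\pmb{\bar a})\nabla v$ and $cuv-\bar c\bar uv=cwv+(c-\bar c)\bar uv$. This gives, for all $v\in H^1_0(\Omega)$,
\[
\int_\Omega (\nabla w)^\mathrm{T}\pmb{a}\nabla v + cwv\,\mathrm{d}\pmb{x}
= \int_\Omega (f-\bar f)v\,\mathrm{d}\pmb{x}
- \int_\Omega (\nabla\bar u)^\mathrm{T}(\pmb{a}-\pmb{\bar a})\nabla v\,\mathrm{d}\pmb{x}
- \int_\Omega (c-\bar c)\bar u v\,\mathrm{d}\pmb{x}.
\]

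Next I would test this identity with $v=w$. On the left, the lower ellipticity bound in \eqref{eq:defSigma_drR} together with $c\ge 0$ yields $\int_\Omega(\nabla w)^\mathrm{T}\pmb{a}\nabla w+cw^2\,\mathrm{d}\pmb{x}\ge r\|w\|_{H^1_0}^2$. On the right, Cauchy--Schwarz combined with the Poincar\'e inequality $\|v\|_{L^2}\le C_\text{P}\|v\|_{H^1_0}$ and the pointwise matrix bound $|\xi^\mathrm{T}M\zeta|\le\|M\|_{\ell^2(\R^{d\times d})}|\xi|_2|\zeta|_2$ gives respectively $C_\text{P}\|f-\bar f\|_{L^2}\|w\|_{H^1_0}$, $\|\pmb{a}-\pmb{\bar a}\|_{L^\infty(\Omega;\ell^2(\R^{d\times d}))}\|\bar u\|_{H^1_0}\|w\|_{H^1_0}$, and $C_\text{P}^2\|c-\bar c\|_{L^\infty}\|\bar u\|_{H^1_0}\|w\|_{H^1_0}$. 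Dividing by $\|w\|_{H^1_0}$ (the case $w=0$ being trivial) bounds $\|w\|_{H^1_0}$ by $\tfrac1r$ times the sum of the three leading factors.

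Finally I would insert the a priori bound $\|\bar u\|_{H^1_0}\le RC_\text{P}/r$, obtained by testing \eqref{eq:ellipFixDomWeak} for $\bar u$ with $\bar u$ itself: $r\|\bar u\|_{H^1_0}^2\le\int_\Omega\bar f\bar u\,\mathrm{d}\pmb{x}\le\|\bar f\|_{L^2}\|\bar u\|_{L^2}\le RC_\text{P}\|\bar u\|_{H^1_0}$, using $\bar f\in\mathsf{D}^f_R=\bar B_R(L^2(\Omega))$. Substituting and factoring out $C_\text{P}/r^2$ reproduces the stated inequality. There is no real obstacle; the only points needing care are keeping the two matrix-field norms consistent — that $\|\pmb{a}-\pmb{\bar a}\|_{L^\infty(\Omega;\ell^2(\R^{d\times d}))}$ is the essential supremum over $\pmb{x}$ of the $\ell^2\!\to\!\ell^2$ operator norm of $\pmb{a}(\pmb{x})-\pmb{\bar a}(\pmb{x})$ — and noting that the asymmetric split above (leaving $\pmb{a},c$ in the tested bilinear form, attaching the coefficient differences to $\bar u$) is precisely what makes the a priori bound on $\bar u$ rather than on $u$ suffice; the symmetric choice works equally well since $\|u\|_{H^1_0}\le RC_\text{P}/r$ holds too.
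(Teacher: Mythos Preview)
Your proposal is correct and follows essentially the same approach as the paper: subtract the two weak formulations to isolate the unperturbed bilinear form acting on $w=u-\bar u$, test with $v=w$, use the lower ellipticity bound and $c\ge 0$ on the left, Cauchy--Schwarz plus Poincar\'e on the right, and close with the a priori estimate $\|\bar u\|_{H^1_0}\le RC_\text{P}/r$ (which the paper states separately as Lemma~\ref{lem:ellpH01u}). The organization differs only cosmetically.
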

\verB{
If we denote $\mathcal{A}_0=\Sigma^d_{r,R}(\Omega)\times \mathsf{D}^c\times \mathsf{D}_R^f$ with metric%
\footnote{This metric can be viewed as inherited from the norm of a Banach space. Therefore, Proposition~\ref{prop:decWidthLip} is applicable. }
given as
\[d_{\mathcal{A}_0}((\pmb{a},c,f),(\pmb{\bar a},\bar c,\bar f)) =
	R\|\pmb{\bar a}-\pmb{a}\|_{L^\infty(\Omega;\ell^2(\R^{d\times d}))}
	+RC_\text{P}^2\|\bar c-c\|_{L^\infty}
	+r\|\bar f-f\|_{L^2}
,\]
then Proposition~\ref{prop:ellpLip} indicates that
the (extended) solution mapping
$ S_0:\mathcal{A}_0 \to H_0^1(\Omega), \quad(\pmb{a},c,f)\mapsto u $
is $\frac{C_\text{P}}{r^2}$-Lipschitz continuous. 
The following theorem is then direct from Proposition~\ref{prop:decWidthLip}:
\begin{thm}\label{thm:ellipDecoKvsA}
	Let $\mathcal{A}\subset\mathcal{A}_0$ be the set of parameters taken into consideration. 
	Then we shall have
	\[d_{n,C_\text{P}l/r^2}^\text{Deco}(S_0(\mathcal{A}))\le\frac{C_\text{P}}{r^2}d_{n,l}^\text{rDeco}(\mathcal{A}) .\]
	Intuitively speaking, the decoder width of the solution set $S_0(\mathcal{A})$ is controlled by the internal complexity (measured by the restricted decoder width) of the parameter set $\mathcal{A}$. 
\end{thm}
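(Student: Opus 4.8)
The plan is to present this theorem as a one-line consequence of Proposition~\ref{prop:decWidthLip}, once the Lipschitz constant of the solution mapping is in hand. The first step is to make the ambient Banach space explicit: as noted in the footnote to the definition of $d_{\mathcal{A}_0}$, this metric is induced by the norm
\[
\|(\pmb{a},c,f)\|_{\mathcal{V}}
=R\|\pmb{a}\|_{L^\infty(\Omega;\ell^2(\R^{d\times d}))}
+RC_\text{P}^2\|c\|_{L^\infty}
+r\|f\|_{L^2}
\]
on the product space $\mathcal{V}=L^\infty(\Omega;\ell^2(\R^{d\times d}))\times L^\infty(\Omega)\times L^2(\Omega)$. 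Thus $\mathcal{A}_0\subset\mathcal{V}$, and the (compact) parameter set $\mathcal{A}\subset\mathcal{A}_0$ under consideration is a compact subset of $\mathcal{V}$, exactly the kind of object Proposition~\ref{prop:decWidthLip} accepts.

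The second step is to record the Lipschitz continuity. Proposition~\ref{prop:ellpLip} says precisely that for any two triples in $\mathcal{A}_0$,
\[
\|u-\bar u\|_{H_0^1}\le\frac{C_\text{P}}{r^2}\,d_{\mathcal{A}_0}\bigl((\pmb{a},c,f),(\pmb{\bar a},\bar c,\bar f)\bigr),
\]
i.e. the extended solution mapping $S_0:\mathcal{A}_0\to H_0^1(\Omega)$, $(\pmb{a},c,f)\mapsto u$, is $\tfrac{C_\text{P}}{r^2}$-Lipschitz. Restricting it to $\mathcal{A}$ preserves this Lipschitz constant; since $\mathcal{A}$ is compact and $S_0$ is continuous, $S_0(\mathcal{A})$ is a compact subset of $H_0^1(\Omega)$. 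Hence the hypotheses of Proposition~\ref{prop:decWidthLip} hold with $\mathcal{V}$ as above, $\mathcal{K}_\mathcal{V}=\mathcal{A}$, $\mathcal{F}=S_0|_{\mathcal{A}}$, $L_\mathcal{F}=C_\text{P}/r^2$, and $\mathcal{K}=S_0(\mathcal{A})$.

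The last step is to apply the corollary $d_{n,L_\mathcal{F}l}^\text{Deco}(\mathcal{K})\le L_\mathcal{F}\,d_{n,l}^\text{rDeco}(\mathcal{K}_\mathcal{V})$ stated in Proposition~\ref{prop:decWidthLip}. With the identifications above this reads $d_{n,C_\text{P}l/r^2}^\text{Deco}(S_0(\mathcal{A}))\le\frac{C_\text{P}}{r^2}\,d_{n,l}^\text{rDeco}(\mathcal{A})$, which is the claim.

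There is no real obstacle inside this theorem — it is essentially bookkeeping. The only matters deserving a sentence of care are that $S_0$ is genuinely well-defined on all of $\mathcal{A}_0$ (unique solvability of the weak problem~\eqref{eq:ellipFixDomWeak} follows from Lax--Milgram, using $\pmb{a}\ge rI$ and $c\ge0$ together with the Poincar\'e inequality for coercivity, which also enters the constant $C_\text{P}$), and that $d_{\mathcal{A}_0}$ is, as asserted, the distance of a norm so that Proposition~\ref{prop:decWidthLip} is applicable. The substantive work — the perturbation estimate for the weak solution with respect to its coefficients — lies entirely in Proposition~\ref{prop:ellpLip}, not here.
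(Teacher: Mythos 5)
Your proof is correct and follows exactly the route the paper intends: identify the Lipschitz constant $C_\text{P}/r^2$ of $S_0$ from Proposition~\ref{prop:ellpLip}, then invoke Proposition~\ref{prop:decWidthLip} with $\mathcal{K}_\mathcal{V}=\mathcal{A}$ and $\mathcal{F}=S_0|_{\mathcal{A}}$. The paper treats this theorem as an immediate corollary of those two propositions, and your write-up simply makes the bookkeeping explicit.
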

}
One simple special case is given as follows: 
\begin{eg}\label{eg:ellipEg1}
	Let $\Omega=(0,1)^2$ be a square domain, $r=1,R=2$, and fix $c\in \mathsf{D}^c$, $f\in \mathsf{D}_2^f$. 
	Given a matrix $\lambda\in\Lambda=[1,2]^{K\times K}$, we take $\pmb{a}^\lambda=(\sum_{i,j=1}^K\lambda_{ij}\chi_{D_{ij}})I_2$ to be a piecewise constant field, 
	where $D_{ij}=[\frac{i-1}{K},\frac iK)\times[\frac{j-1}{K},\frac jK)$ 
	(see Figure~\ref{fig:grid_field}). 
	Let $u^\lambda\in \mathcal{U}=H_0^1(\Omega)$ be the weak solution to~\eqref{eq:ellipFixDom} with coefficients $(\pmb{a}^\lambda,c,f)$. 
	Then we shall have $d_{K^2,KC_\text{P}}^\text{Deco}(\{u^\lambda|\lambda\in\Lambda\})=0$, 
	which indicates that the solution set can be perfectly fitted using a $K^2$-dimensional latent space. 
\end{eg}
\begin{figure}[tb]
	\centering
	\includegraphics[width=0.2\linewidth]{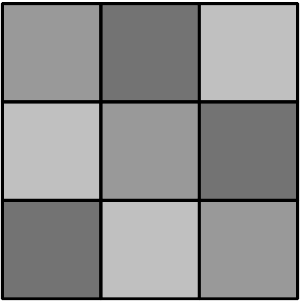}
	\caption{The piecewise constant field $\pmb{a}^\lambda$ used in Example~\ref{eg:ellipEg1}, showing the case $K=3$. Note that $\pmb{a}^\lambda$ is in fact a matrix-valued field. }%
	\label{fig:grid_field}
\end{figure}

\subsection{Parabolic Equations on a Fixed Domain}\label{sec:parabEqn}
Letting $\Omega\subset\R^d$ be a bounded domain with Lipschitz continuous boundary, $0<r<R<\infty$, $T>0$, we consider
the following parabolic equation on $\Omega_T=[0,T]\times\Omega$:
\begin{equation}\label{eq:parabEqOrig}\begin{aligned}
	\partial_tu+Lu &=f\quad&\text{in }\Omega_T
	\\u&=0&\text{on }[0,T]\times\partial\Omega
	\\u&=g&\text{on }\{t=0\}\times\Omega
.\end{aligned}\end{equation}
Here, we define the differential operator $L$ in the divergence form to be
\[
	Lu=-\nabla\cdot(\pmb{a}\nabla u)+\pmb{b}\cdot\nabla u+cu
\]
with variable coefficients
$\pmb{a}\in\Sigma^d_{r,R}(\Omega_T)$, $\pmb{b}\in\bar B_R^d(\Omega_T)$, $c\in\bar B_R^1(\Omega_T)$, where 
\[\begin{split}
	\bar B_R^d(\Omega_T)&=\bar B_R(L^\infty(\Omega_T;\ell^2(\R^d)))
	\\&=\left\{\pmb{b}\in L^\infty(\Omega_T;\R^d)\ \middle|\ \operatorname*{ess\,sup}_{t\in[0,T],\pmb{x}\in\Omega}\|\pmb{b}(t,\pmb{x})\|_2\le R\right\}
\end{split}\]
using the notation given in~\eqref{eq:defBR}, 
and similarly
\[\begin{split}
	\bar B_R^1(\Omega_T)&=\bar B_R(L^\infty(\Omega_T))
	\\&=\left\{c\in L^\infty(\Omega_T)\ \middle|\ \|c\|_{L^\infty}\le R\right\}
.\end{split}\]
Furthermore, we assume
$g\in L^2(\Omega)$, and $f(t,\pmb{x})=\mathbf{f}(t)(\pmb{x})$ for some $\mathbf{f}\in L^2([0,T];H^{-1}(\Omega))$. 
The solution $u$ can also be associated with a mapping $\mathbf{u}$ by $u(t,\pmb{x})=\mathbf{u}(t)(\pmb{x})$. 

We say a function $\mathbf{u}\in L^2([0,T];H_0^1(\Omega))$ with $\mathbf{u}'=\frac{\mathrm{d}}{\mathrm{d}t}\mathbf{u}\in L^2([0,T];H^{-1}(\Omega))$ is a weak solution to~\eqref{eq:parabEqOrig} if
\begin{equation}\label{eq:parabEqWeak}\begin{aligned}
	\langle \mathbf{u}'(t),v\rangle+A[\mathbf{u}(t),v;t]&=\langle \mathbf{f}(t),v\rangle,
	\\\mathbf{u}(0)&=g
\end{aligned}\end{equation}
holds\footnote{According to Theorem~3\ in Section~5.9.2 of~\cite{Evans2010PartialDE}, 
we shall have $\mathbf{u}\in C([0,T];L^2(\Omega))$, and therefore the equality $\mathbf{u}(0)=g$ makes sense. }
for all $v\in H_0^1(\Omega),t\in[0,T]$, 
where the time-dependent bilinear form $A[u,v;t]$ is given by
\[\begin{split}
	A[u,v;t]&=
	\int_\Omega(\nabla v(\pmb{x}))^\mathrm{T}\pmb{a}(t,\pmb{x})(\nabla u(\pmb{x}))+v(\pmb{x})\pmb{b}(t,\pmb{x})^\mathrm{T}\nabla u(\pmb{x})+c(t,\pmb{x})u(\pmb{x})v(\pmb{x})\,\mathrm{d}\pmb{x}  
\end{split}\]
for $u,v\in H_0^1(\Omega)$ and $t\in[0,T]$.
\begin{prop}\label{prop:parabSLip}
	Denote $\mathsf{D}_R^\mathbf{f}=\bar B_R(L^2([0,T];H^{-1}(\Omega)))$, $\mathsf{D}_R^g=\bar B_R(L^2(\Omega))$ using the notation~\eqref{eq:defBR}, and
	\[\mathcal{A}_0=
		\Sigma^d_{r,R}(\Omega_T)\times\bar B_{R}^{d}(\Omega_T)\times\bar B_{R}^1(\Omega_T)
		\times \mathsf{D}_R^\mathbf{f}\times \mathsf{D}_R^g
	.\]
	Then the (extended) solution mapping
	\[\begin{split}
		S_0:\mathcal{A}_0
		\to L^2([0,T];H_0^1(\Omega)),
		\quad(\pmb{a},\pmb{b},c,\mathbf{f},g)\mapsto\mathbf{u}
	\end{split}\]
	is Lipschitz continuous. 
	Specifically, 
	let $(\pmb{a},\pmb{b},c,\mathbf{f},g)$ and $(\pmb{\bar a},\pmb{\bar b},\bar c,\bar{\mathbf{f}},\bar g)$ be two sets of coefficient fields, with the corresponding weak solutions $\mathbf{u}$ and $\bar{\mathbf{u}}$, respectively. 
	Then we should have
	\begin{equation}\label{eq:parabLipThm}\begin{split}
		\|\mathbf{u}-\bar{\mathbf{u}}\|_{L^2([0,T];H_0^1(\Omega))}
		&\le C(\|\pmb{a}-\pmb{\bar a}\|_{L^\infty(\Omega_T,\ell^2(\R^{d\times d}))}
		+\|\pmb{b}-\pmb{\bar b}\|_{L^\infty(\Omega_T,\ell^2(\R^d))}
		\\&\quad+\|c-\bar c\|_{L^\infty(\Omega_T)}
		+\|\mathbf{f}-\bar{\mathbf{f}}\|_{L^2([0,T];H^{-1}(\Omega))}
		\\&\quad+\|g-\bar g\|_{L^2})
	,\end{split}\end{equation}
	where the constant $C>0$ depends only on $r,R,T$ and $\Omega$. 
\end{prop}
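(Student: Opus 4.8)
The plan is to prove~\eqref{eq:parabLipThm} by a standard energy estimate for the difference $w(t):=\mathbf{u}(t)-\bar{\mathbf{u}}(t)\in H_0^1(\Omega)$, followed by Gr\"onwall's inequality and an a priori bound on $\bar{\mathbf{u}}$. Write $A[\cdot,\cdot;t]$ and $\bar A[\cdot,\cdot;t]$ for the time-dependent bilinear forms associated with $(\pmb a,\pmb b,c)$ and $(\pmb{\bar a},\pmb{\bar b},\bar c)$, respectively. Existence and uniqueness of $\mathbf{u},\bar{\mathbf{u}}$ together with the basic energy bound
\[
\|\bar{\mathbf{u}}\|_{L^2([0,T];H_0^1(\Omega))}+\sup_{t\in[0,T]}\|\bar{\mathbf{u}}(t)\|_{L^2(\Omega)}\le C_0\bigl(\|\bar{\mathbf{f}}\|_{L^2([0,T];H^{-1}(\Omega))}+\|\bar g\|_{L^2(\Omega)}\bigr)\le C_0',
\]
follow from the theory in Section~7.1 of~\cite{Evans2010PartialDE}, where the last step uses $\bar{\mathbf{f}}\in\mathsf{D}_R^{\mathbf{f}}$, $\bar g\in\mathsf{D}_R^g$, so $C_0'$ depends only on $r,R,T,\Omega$. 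Subtracting the two weak formulations~\eqref{eq:parabEqWeak} and rearranging, for a.e.\ $t\in[0,T]$ and all $v\in H_0^1(\Omega)$,
\[
\langle w'(t),v\rangle+A[w(t),v;t]=\langle\mathbf{f}(t)-\bar{\mathbf{f}}(t),v\rangle-(A-\bar A)[\bar{\mathbf{u}}(t),v;t],\qquad w(0)=g-\bar g,
\]
with the remainder expanding as $(A-\bar A)[\bar{\mathbf{u}}(t),v;t]=\int_\Omega(\nabla v)^{\mathrm T}(\pmb a-\pmb{\bar a})\nabla\bar{\mathbf{u}}(t)+v(\pmb b-\pmb{\bar b})^{\mathrm T}\nabla\bar{\mathbf{u}}(t)+(c-\bar c)\bar{\mathbf{u}}(t)v\,\mathrm d\pmb x$.

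\textbf{Testing and coercivity.} Next I would test this identity with $v=w(t)$. Using $\langle w'(t),w(t)\rangle=\tfrac12\tfrac{\mathrm d}{\mathrm dt}\|w(t)\|_{L^2}^2$ and the G\aa rding inequality $A[w,w;t]\ge\beta\|w\|_{H_0^1}^2-\gamma\|w\|_{L^2}^2$, which holds with $\beta$ depending only on $r$ and $\gamma$ only on $r,R$ (from uniform ellipticity of $\pmb a$ and the $L^\infty$ bounds on $\pmb b,c$, plus Young's inequality on the first-order term), one obtains
\[
\tfrac12\tfrac{\mathrm d}{\mathrm dt}\|w(t)\|_{L^2}^2+\beta\|w(t)\|_{H_0^1}^2\le\gamma\|w(t)\|_{L^2}^2+\|\mathbf{f}(t)-\bar{\mathbf{f}}(t)\|_{H^{-1}}\|w(t)\|_{H_0^1}+\bigl|(A-\bar A)[\bar{\mathbf{u}}(t),w(t);t]\bigr|.
\]
Estimating the last two terms by Cauchy--Schwarz with the coefficient bounds and the $L^\infty$ coefficient-difference norms, then applying Young's inequality so that each occurrence of $\|w(t)\|_{H_0^1}$ is split and a total of $\tfrac\beta2\|w(t)\|_{H_0^1}^2$ is absorbed into the left-hand side, leaves $\tfrac{\mathrm d}{\mathrm dt}\|w(t)\|_{L^2}^2+\beta\|w(t)\|_{H_0^1}^2\le C_1\|w(t)\|_{L^2}^2+2h(t)$, where
\[
h(t)=C_2\bigl(\|\pmb a-\pmb{\bar a}\|_{L^\infty}^2+\|\pmb b-\pmb{\bar b}\|_{L^\infty}^2+\|c-\bar c\|_{L^\infty}^2\bigr)\bigl(1+\|\bar{\mathbf{u}}(t)\|_{H_0^1}^2\bigr)+C_2\|\mathbf{f}(t)-\bar{\mathbf{f}}(t)\|_{H^{-1}}^2.
\]

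\textbf{Gr\"onwall and collecting terms.} From $\tfrac{\mathrm d}{\mathrm dt}\|w(t)\|_{L^2}^2\le C_1\|w(t)\|_{L^2}^2+2h(t)$ and $w(0)=g-\bar g$, Gr\"onwall's inequality gives $\sup_{t}\|w(t)\|_{L^2}^2\le e^{C_1T}\bigl(\|g-\bar g\|_{L^2}^2+2\int_0^Th(s)\,\mathrm ds\bigr)$. Integrating the differential inequality over $[0,T]$ and inserting this bound then controls $\beta\|\mathbf{u}-\bar{\mathbf{u}}\|_{L^2([0,T];H_0^1)}^2=\beta\int_0^T\|w(t)\|_{H_0^1}^2\,\mathrm dt$ by the same right-hand side. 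Since $\int_0^Th(s)\,\mathrm ds\le C_2\bigl(\|\pmb a-\pmb{\bar a}\|_{L^\infty}^2+\|\pmb b-\pmb{\bar b}\|_{L^\infty}^2+\|c-\bar c\|_{L^\infty}^2\bigr)\bigl(T+\|\bar{\mathbf{u}}\|_{L^2([0,T];H_0^1)}^2\bigr)+C_2\|\mathbf{f}-\bar{\mathbf{f}}\|_{L^2([0,T];H^{-1})}^2$, the a priori bound above replaces $\|\bar{\mathbf{u}}\|_{L^2([0,T];H_0^1)}^2$ by a constant depending only on $r,R,T,\Omega$. Collecting everything yields $\|\mathbf{u}-\bar{\mathbf{u}}\|_{L^2([0,T];H_0^1)}^2\le C\bigl(\|\pmb a-\pmb{\bar a}\|_{L^\infty}^2+\|\pmb b-\pmb{\bar b}\|_{L^\infty}^2+\|c-\bar c\|_{L^\infty}^2+\|\mathbf{f}-\bar{\mathbf{f}}\|_{L^2([0,T];H^{-1})}^2+\|g-\bar g\|_{L^2}^2\bigr)$ with $C=C(r,R,T,\Omega)$; taking square roots and using $\sqrt{\sum_ia_i^2}\le\sum_i|a_i|$ gives~\eqref{eq:parabLipThm}, whence Lipschitz continuity of $S_0$.

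\textbf{Main obstacle.} The step I expect to require the most care is the bookkeeping that upgrades an \emph{a priori} estimate to a \emph{Lipschitz} estimate: one must verify the G\aa rding constants $\beta,\gamma$ are uniform over all admissible coefficient fields (depending only on $r,R,\Omega$, never on the particular $\pmb a,\pmb b,c$), and one must keep every coefficient-difference norm multiplied only by a \emph{constant} bound on $\bar{\mathbf{u}}$ rather than by a norm of a solution --- this is precisely what makes the final estimate linear in the perturbations. The explicit time-dependence of the bilinear form contributes only notational overhead, since all the estimates above are made pointwise in $t$ before integrating.
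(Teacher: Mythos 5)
Your proof is correct and follows essentially the same route as the paper: subtract the two weak formulations to obtain an equation for $w=\mathbf{u}-\bar{\mathbf{u}}$, test with $w(t)$, invoke a G\aa rding inequality with constants uniform in the admissible coefficients (this is the paper's Lemma~\ref{lem:weakBilForm}), absorb the $H_0^1$ portion via Young, close with Gr\"onwall, and use the a priori bound on $\bar{\mathbf{u}}$. The only cosmetic difference is bookkeeping: the paper packages the coefficient-difference terms and $\mathbf{f}-\bar{\mathbf{f}}$ into a single source $\mathbf{f}_1$ and then applies the a priori estimate (Lemma~\ref{lem:parabSolNorm}) as a black box to the equation for $\mathbf{w}$, whereas you re-derive the same Gr\"onwall estimate inline; the underlying energy computation is identical.
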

\verB{
The metric on $\mathcal{A}_0$ is chosen according to the right-hand-side of~\eqref{eq:parabLipThm} (inside the parenthesis). 
Similar to the elliptic equations, the decoder width of the solution set admits an upper bound given by the complexity of the parameter set $\mathcal{A}$, as the following theorem would imply:
\begin{thm}
	Let $\mathcal{A}\subset\mathcal{A}_0$ be the set of parameters taken into consideration. Then 
	$d_{n,Cl}^\text{Deco}(S_0(\mathcal{A}))\le Cd_{n,l}^\text{rDeco}(\mathcal{A}) ,$
	where $C$ is the same as in Proposition~\ref{prop:parabSLip}.
\end{thm}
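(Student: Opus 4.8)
The plan is to derive this theorem as an immediate consequence of Proposition~\ref{prop:parabSLip} and Proposition~\ref{prop:decWidthLip}, in exact parallel with the passage from Proposition~\ref{prop:ellpLip} to Theorem~\ref{thm:ellipDecoKvsA}. First I would make explicit the metric on $\mathcal{A}_0$: define
\[\begin{split}
	d_{\mathcal{A}_0}\bigl((\pmb{a},\pmb{b},c,\mathbf{f},g),(\pmb{\bar a},\pmb{\bar b},\bar c,\bar{\mathbf{f}},\bar g)\bigr)
	&=\|\pmb{a}-\pmb{\bar a}\|_{L^\infty(\Omega_T,\ell^2(\R^{d\times d}))}
	+\|\pmb{b}-\pmb{\bar b}\|_{L^\infty(\Omega_T,\ell^2(\R^d))}
	\\&\quad+\|c-\bar c\|_{L^\infty(\Omega_T)}
	+\|\mathbf{f}-\bar{\mathbf{f}}\|_{L^2([0,T];H^{-1}(\Omega))}
	+\|g-\bar g\|_{L^2},
\end{split}\]
which is precisely the quantity in the parenthesis on the right-hand side of~\eqref{eq:parabLipThm}. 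As noted in the footnote accompanying the elliptic case, this metric is inherited from a norm on the product Banach space $L^\infty(\Omega_T;\ell^2(\R^{d\times d}))\times L^\infty(\Omega_T;\ell^2(\R^d))\times L^\infty(\Omega_T)\times L^2([0,T];H^{-1}(\Omega))\times L^2(\Omega)$, so $\mathcal{A}_0$ sits inside a Banach space and Proposition~\ref{prop:decWidthLip} applies with $\mathcal{V}$ taken to be that product space.

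Next I would observe that, with this metric, the estimate~\eqref{eq:parabLipThm} of Proposition~\ref{prop:parabSLip} says exactly that $S_0:\mathcal{A}_0\to L^2([0,T];H_0^1(\Omega))$ is $C$-Lipschitz continuous, where $C$ is the constant appearing there. Restricting $S_0$ to any compact subset $\mathcal{A}\subset\mathcal{A}_0$, we still have a $C$-Lipschitz map $S_0|_{\mathcal{A}}:\mathcal{A}\to L^2([0,T];H_0^1(\Omega))$ with image $S_0(\mathcal{A})$. Here one should note that $\mathcal{A}$ being compact guarantees $S_0(\mathcal{A})$ is a compact subset of $L^2([0,T];H_0^1(\Omega))$, so the decoder width $d_{n,l}^\text{Deco}(S_0(\mathcal{A}))$ and the restricted decoder width $d_{n,l}^\text{rDeco}(\mathcal{A})$ are both well-defined.

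Finally I would invoke Proposition~\ref{prop:decWidthLip} with $\mathcal{K}_\mathcal{V}=\mathcal{A}$, $\mathcal{F}=S_0|_{\mathcal{A}}$, $L_\mathcal{F}=C$, and $\mathcal{K}=S_0(\mathcal{A})$, whose corollary gives directly
\[d_{n,Cl}^\text{Deco}(S_0(\mathcal{A}))\le C\,d_{n,l}^\text{rDeco}(\mathcal{A}),\]
which is the claim. There is no genuine obstacle here — the theorem is a bookkeeping corollary — so the only point requiring care is that all the analytic content has already been discharged inside Proposition~\ref{prop:parabSLip}; the main work, were it not already done, would be establishing that Lipschitz estimate (energy estimates for the parabolic weak formulation~\eqref{eq:parabEqWeak}, controlling the dependence of $\mathbf{u}$ on each of the five data fields via Gr\"onwall-type arguments and the coercivity furnished by $\pmb{a}\in\Sigma^d_{r,R}(\Omega_T)$). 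Given that, the deduction above is purely formal.
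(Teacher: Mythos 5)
Your proposal is correct and follows exactly the route the paper intends: the paper explicitly states that this theorem "is easily obtained by combining Proposition~\ref{prop:decWidthLip} with the $C$-Lipschitz continuity of $S_0$ (Proposition~\ref{prop:parabSLip})" and omits the proof. Your spelling out of the metric on $\mathcal{A}_0$ as the right-hand side of~\eqref{eq:parabLipThm}, the identification of the ambient product Banach space, and the direct invocation of Proposition~\ref{prop:decWidthLip} with $\mathcal{F}=S_0|_{\mathcal{A}}$ and $L_\mathcal{F}=C$ is precisely the intended bookkeeping.
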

This theorem is easily obtained by combining Proposition~\ref{prop:decWidthLip} with the $C$-Lipschitz continuity of $S_0$ (Proposition~\ref{prop:parabSLip}), and we shall omit the proof. 
}
\subsubsection{Special Cases}
Assume we are given a sequence of functions
$\{(\pmb{\phi}_k^a,\pmb{\phi}_k^b,\phi_k^c,\phi_k^\mathbf{f},\phi_k^g)\}_{k=1}^\infty,$ 
satisfying: 
\begin{enumerate}[(a)]
	\item (boundedness) $\pmb{\phi}_k^a\in\bar B_{(R-r)/2}^d(\Omega_T)$, $\pmb{\phi}_k^b\in\bar B_R^d(\Omega_T)$, $\phi_k^c\in\bar B_R^1(\Omega_T)$. 
	\item (disjoint support) For $k\ne l$, we have $\mathrm{supp}\pmb{\phi}_k^a\cap\mathrm{supp}\pmb{\phi}_l^a=\varnothing$, where $\mathrm{supp}\pmb{\phi}_k^a$ represents the 
		essential support of the function $\pmb{\phi}_k^a:\Omega_T\to\R^d$. 
		Similar assumptions are made for $\pmb{\phi}_k^b$ and $\phi_k^c$. 
	\item $\phi_k^\mathbf{f}\in \mathsf{D}_R^\mathbf{f}$, $\sum_{k=1}^{\infty}\|\phi_k^\mathbf{f}\|_{L^2([0,T];H^{-1}(\Omega))}^2\le R^2$. 
	\item $\phi_k^g\in \mathsf{D}_R^g$, $\sum_{k=1}^{\infty}\|\phi_k^g\|_{L^2}^2\le R^2$. 
\end{enumerate}
Let
$\lambda=\{(\lambda_k^a,\lambda_k^b,\lambda_k^c,\lambda_k^\mathbf{f},\lambda_k^g)\}_{k=1}^\infty$ 
be a sequence of scalars, 
and we denote
\[ \lambda_k^2=(\lambda_k^a)^2+(\lambda_k^b)^2+(\lambda_k^c)^2+(\lambda_k^\mathbf{f})^2+(\lambda_k^g)^2 \]
for simplicity. 
We further assume that the corresponding norm
\[ \|\lambda\|=\sqrt{\sum_{k=1}^{\infty}\lambda_k^2} \]
is finite. 
The mapping $S_\Lambda:\lambda\mapsto(\pmb{a},\pmb{b},c,\mathbf{f},g)$ is defined as
\[\begin{split}
	\pmb{a}&=\frac12(R+r)I_d+\mathrm{diag}\biggl(\sum_{k=1}^{\infty}\lambda_k^a\pmb{\phi}_k^a\biggr),
	\quad \pmb{b}=\sum_{k=1}^{\infty}\lambda_k^b\pmb{\phi}_k^b,
	\\c&=\sum_{k=1}^{\infty}\lambda_k^c\phi_k^c,
	\quad \mathbf{f}=\sum_{k=1}^{\infty}\lambda_k^\mathbf{f}\phi_k^\mathbf{f},
	\quad g=\sum_{k=1}^{\infty}\lambda_k^g\phi_k^g
,\end{split}\]
\verB{and we denote $u^\lambda=S_0(S_\Lambda(\lambda))$ to be the weak solution to~\eqref{eq:parabEqOrig} for specific $\lambda$. }
Two special cases are given as follows:
\begin{eg}\label{eg:parabEg1}
	Let
	$ \Lambda_K=\{\lambda\mid\lambda_k^2=0\text{ for }k>K,\quad \sum_{k=1}^{K}\lambda_k^2\le 1\} ,$
	which essentially consists of sequences of finite length. 
	Then (1) $S_\Lambda(\lambda)\in\mathcal{A}_0$ holds for any $\lambda\in\Lambda_K$,
	(2) the mapping $S_0\circ S_\Lambda:\Lambda_K\to \mathcal{U}=L^2([0,T];H_0^1(\Omega))$ is $5CR$-Lipschitz continuous, where the constant $C$ is the same as in Proposition~\ref{prop:parabSLip}, 
	and (3) $d_{5K,5CR}^\text{Deco}(\mathcal{K}_K)=0$, 
	where $\mathcal{K}_K=\{u^\lambda\mid\lambda\in\Lambda_K\}=S_0\circ S_\Lambda(\Lambda_K)$ is the solution set. 
\end{eg}
\begin{eg}\label{eg:parabEg2}
	Let $\{w_k\}$ be a decreasing sequence with $w_1=1$, $\lim_{k\to\infty}w_k=0$, and let
	$ \Lambda_w=\{\lambda\mid\sum_{k=1}^\infty\lambda_k^2/w_k\le 1\} .$
	Then the solution set $\mathcal{K}_w=\{u^\lambda\mid\lambda\in\Lambda_w\}=S_0\circ S_\Lambda(\Lambda_w)$ satisfies
	$d_{5n,5CR}^\text{Deco}(\mathcal{K}_w)\le 5CR\sqrt{w_n}$, 
	where the constant $C$ is the same as in Proposition~\ref{prop:parabSLip}. 
\end{eg}

\subsection{Elliptic Equations on a Variable Domain}\label{sec:ellpVD}
Assume we are given a fixed \emph{convex} master domain $\Omega^\text{m}$, and the subdomain of interest $\Omega\subset\Omega^\text{m}$ may vary. 
All possible choices of $\Omega$ as well as $\Omega^\text{m}$
have a Lipschitz continuous boundary. 
In this case, the PDE parameter $\eta=(\pmb{a},f,\Omega)$ would involve the shape of the computational domain. We
consider the elliptic equation
\begin{equation}\label{eq:ellipRawDom}\begin{aligned}
	Lu&=f&\text{in }\Omega,
	\\u&=0&\text{on }\partial\Omega
,\end{aligned}\end{equation}
where
\[
	Lu=-\nabla\cdot(\pmb{a}(\pmb{x})\nabla u(\pmb{x}))=-\sum_{i,j=1}^d\partial_j(\pmb{a}^{ij}(\pmb{x})\partial_iu(\pmb{x}))
,\]
and the following assumptions are satisfied: 
\begin{assump}\label{ass:vardomEllip}
	(1) $\pmb{a}:\Omega^\text{m}\to\ell^2(\R^{d\times d})$ is Lipschitz continuous with Lipschitz constant $R$, 
	(2) $\pmb{a}\in\Sigma_{r,R}^d(\Omega^\text{m})$, i.e. for all $\pmb{x}\in\Omega^\text{m}$, $\pmb{a}(\pmb{x})$ is a symmetric matrix satisfying
	\[
		r\xi^\mathrm{T}\xi\le\xi^\mathrm{T}\pmb{a}(\pmb{x})\xi\le R\xi^\mathrm{T}\xi,\quad\forall\xi\in\R^d
	,\]
	and (3) $f\in\bar B_R(L^2(\Omega))$ (using the notation~\eqref{eq:defBR}), i.e. $f\in L^2(\Omega^\text{m})$, $\|f\|_{L^2(\Omega^\text{m})}\le R$. 
	
\end{assump}
In order to estimate the decoder width of the set of weak solutions $\{u^\eta\}$ to~\ref{eq:ellipRawDom}, we consider the following PDEs on the extended domain
\begin{equation}\label{eq:ellipVarDom}\begin{aligned}
	Lu&=f&\text{in }\Omega^\text{m}\setminus\partial\Omega,
	\\u&=0&\text{on }\partial\Omega^\text{m}\cup\partial\Omega
.\end{aligned}\end{equation}
Then, if $\bar u\in H^1_0(\Omega^\text{m}\setminus\partial\Omega)$ is the weak solution to~\eqref{eq:ellipVarDom}, i.e.,
\[
	\int_{\Omega^\text{m}}(\nabla \bar u(\pmb{x}))^\mathrm{T}\pmb{a}(\pmb{x})(\nabla v(\pmb{x}))\,\mathrm{d}\pmb{x}
	=\int_{\Omega^\text{m}}f(\pmb{x})v(\pmb{x})\,\mathrm{d}\pmb{x},\quad\forall v\in H^1_0(\Omega^\text{m}\setminus\partial\Omega)
,\]
it is easy to see that $u=\bar u|_\Omega$ is the weak solution to~\eqref{eq:ellipRawDom}, 
and we only need to examine the set of these extended solutions $\{\bar u^\eta\}$. 
For notational convenience, we do not explicitly distinguish the solutions to~\eqref{eq:ellipRawDom} and~\eqref{eq:ellipVarDom} unless specified otherwise, and denote $u$ to be the weak solution to~\eqref{eq:ellipVarDom}. 

Now we consider a class of domains $\Omega_\lambda\subset\Omega^\text{m}$ parametrized by $\lambda\in\Lambda$, where $\Lambda\subset\R^{d_\lambda}$ is a bounded convex set that contains the origin. 
The following assumptions are made on the parametrized domains:
\begin{assump}\label{ass:vardomTransforms}
	There exist a parametrized class of $C^1$-diffeomorphisms%
	\footnote{Explicit domain transformations are required in the theoretical estimation of the decoder width, but not in the numerical computation to solve the parametric PDEs.
	MAD is still a flexible algorithm in this sense. }
	$T_\lambda:\bar\Omega^\text{m}\to\bar\Omega^\text{m}$ satisfying: 
	\begin{enumerate}[(1)]
		\item $T_0=\mathrm{id}$, $T_\lambda(\Omega_0)=\Omega_\lambda$. 
		\item  $\sigma_{\min}(\frac{\partial T_\lambda(x)}{\partial x})\ge 1/2$ holds for any $x\in\bar\Omega^\text{m},\lambda\in\Lambda$, where $\sigma_{\min}$ denotes the smallest singular value of a matrix. 
		\item Given $x\in\bar\Omega^\text{m}$, both $T_\lambda(x)$ and $\frac{\partial T_\lambda(x)}{\partial x}$ are $1$-Lipschitz continuous with respect to $\lambda$, i.e.,
			\[\|T_{\lambda}(x)-T_{\lambda'}(x)\|_2\le\|\lambda-\lambda'\|_2,\quad\left\lVert \frac{\partial T_{\lambda}(x)}{\partial x}-\frac{\partial T_{\lambda'}(x)}{\partial x}\right\rVert_2\le\|\lambda-\lambda'\|_2 \]
			for any $\lambda,\lambda'\in\Lambda$. 
	\end{enumerate}
\end{assump}

\verB{
\subsubsection{Fixed Parameters}
In the first case, the parameters $\pmb{a},f$ are fixed for simplicity, and
}
the weak solution to~\eqref{eq:ellipVarDom} with domain $\Omega=\Omega_\lambda$ is denoted as $u^\lambda\in{H_0^1(\Omega^\text{m}\setminus\partial\Omega_\lambda)}$. 
We have the following proposition on the corresponding solution mapping:
\begin{prop}\label{prop:eVDtransLip}
	Under Assumption~\ref{ass:vardomEllip} and~\ref{ass:vardomTransforms}, the solution mapping $S:\Lambda\to \mathcal{U}=L^2(\Omega^\text{m})$, $\lambda\mapsto u^\lambda$ is $C$-Lipschitz continuous, 
	where the constant $C$ depends only on $r,R$ and $\Omega^\text{m}$. 
	Specifically, we can take
	\[C=\frac{4RC_\text{P}}{r^2}(3r+(\tfrac38(9+\delta_0^{-1})+1)RC_\text{P}),\]
	where $C_\text{P}$ is the Poincar\'e constant of $\Omega^\text{m}$, 
	and $\delta_0=\min(1-(8/9)^{2/d},1/9)$. 
\end{prop}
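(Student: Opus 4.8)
The plan is to combine a single near-identity domain perturbation estimate (this is essentially the purpose of Section~\ref{sec:ellip2domain}) with a chaining argument that exploits the convexity of the parameter set $\Lambda$. First note the uniform energy bound: testing~\eqref{eq:ellipVarDom} against $u^\lambda$ and using the coercivity constant $r$ of $\pmb a\in\Sigma_{r,R}^d(\Omega^\text{m})$ together with the Poincar\'e inequality on $\Omega^\text{m}$ gives $\|\nabla u^\lambda\|_{L^2(\Omega^\text{m})}\le C_\text{P}R/r$, uniformly in $\lambda$; this is why the final constant depends only on $r,R$ and $\Omega^\text{m}$. For a single perturbation, suppose $\Omega_2=\beta(\Omega_1)$ with $\Omega_1,\Omega_2\subseteq\Omega^\text{m}$ and $\beta\colon\bar\Omega^\text{m}\to\bar\Omega^\text{m}$ a $C^1$-diffeomorphism with $\|\beta-\mathrm{id}\|_{L^\infty}\le\eta$, $\|D\beta-I\|_{L^\infty}\le\eta'$, where $\eta'$ is below the threshold $\delta_0$. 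Pull the second solution back: $\tilde u_2:=u_2\circ\beta$ lies in $H_0^1(\Omega^\text{m}\setminus\partial\Omega_1)$ because $\beta(\partial\Omega^\text{m})=\partial\Omega^\text{m}$ and $\beta(\partial\Omega_1)=\partial\Omega_2$, and the change of variables $x=\beta(y)$ shows that $\tilde u_2$ solves the same weak problem on $\Omega^\text{m}\setminus\partial\Omega_1$ but with the transformed coefficient $\tilde{\pmb a}=|\det D\beta|\,(D\beta)^{-1}(\pmb a\circ\beta)(D\beta)^{-\mathrm T}$ and the transformed right-hand side $\tilde f$ characterised by $\langle\tilde f,v\rangle=\langle f,v\circ\beta^{-1}\rangle$. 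Testing the difference of the two weak formulations against $w:=u_1-\tilde u_2\in H_0^1(\Omega^\text{m}\setminus\partial\Omega_1)$ and keeping the original $\pmb a$ for coercivity yields
\[
r\,\|\nabla w\|_{L^2}\le\|\tilde f-f\|_{H^{-1}(\Omega^\text{m})}+\|\tilde{\pmb a}-\pmb a\|_{L^\infty}\,\|\nabla\tilde u_2\|_{L^2},
\]
which is exactly the energy step in the proof of Proposition~\ref{prop:ellpLip} but carried out with the two coefficients in asymmetric roles (so that no ellipticity assumption on $\tilde{\pmb a}$ is needed), and then $\|u_1-u_2\|_{L^2}\le C_\text{P}\|\nabla w\|_{L^2}+\|u_2\circ\beta-u_2\|_{L^2}$.

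The three quantities on the right are controlled as follows. Since $\pmb a$ is $R$-Lipschitz (Assumption~\ref{ass:vardomEllip}(1)) and $D\beta$, $\det D\beta$, $(D\beta)^{-1}$ are all within $O(\eta')$ of their values at $\beta=\mathrm{id}$, one gets $\|\tilde{\pmb a}-\pmb a\|_{L^\infty}\le c_1R(\eta+\eta')$ and $\|\nabla\tilde u_2\|_{L^2}\le c_2C_\text{P}R/r$ with explicit constants $c_1,c_2$ depending on $\delta_0$. For the right-hand side I use $\langle\tilde f-f,v\rangle=\langle f,v\circ\beta^{-1}-v\rangle$ and the fact that the \emph{test function} $v$ is $H^1$: since $\Omega^\text{m}$ is \emph{convex}, the segment $s\mapsto(1-s)x+s\beta^{-1}(x)$ stays in $\Omega^\text{m}$, its Jacobian $I+s(D\beta^{-1}-I)$ is uniformly invertible, and Cauchy--Schwarz together with a change of variables give $\|v\circ\beta^{-1}-v\|_{L^2}\le\eta(1-\delta_0)^{-d/2}\|\nabla v\|_{L^2}$, whence $\|\tilde f-f\|_{H^{-1}}\le R\eta(1-\delta_0)^{-d/2}$. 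The same interpolation estimate yields $\|u_2\circ\beta-u_2\|_{L^2}\le\eta(1-\delta_0)^{-d/2}\|\nabla u_2\|_{L^2}\le\tfrac98\eta C_\text{P}R/r$, the choice $\delta_0\le1-(8/9)^{2/d}$ being precisely what forces $(1-\delta_0)^{-d/2}\le9/8$, while $\delta_0\le1/9$ keeps $(D\beta)^{\pm1}$ and $\det D\beta$ within the constants used above. Collecting terms turns the single-perturbation estimate into $\|u_1-u_2\|_{L^2}\le C''(\eta+\eta')$ with $C''$ of the form $\tfrac{RC_\text{P}}{r}(a+bRC_\text{P}/r)$ for explicit $a,b$ depending on $\delta_0$.

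For the chaining, given $\lambda,\lambda'\in\Lambda$ the segment $\gamma(t)=(1-t)\lambda+t\lambda'$ lies in $\Lambda$ by convexity. Partition $[0,1]$ into $N$ equal pieces, set $\lambda_i=\gamma(i/N)$, $h=\|\lambda_i-\lambda_{i-1}\|_2=\|\lambda-\lambda'\|_2/N$, and put $\beta_i=T_{\lambda_i}\circ T_{\lambda_{i-1}}^{-1}\colon\bar\Omega^\text{m}\to\bar\Omega^\text{m}$, so that $\beta_i(\Omega_{\lambda_{i-1}})=\Omega_{\lambda_i}$ by Assumption~\ref{ass:vardomTransforms}(1). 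Evaluating Assumption~\ref{ass:vardomTransforms}(3) at $x=T_{\lambda_{i-1}}^{-1}(y)$ gives $\|\beta_i-\mathrm{id}\|_{L^\infty}\le h$, and combining its second inequality with $\sigma_{\min}(DT_{\lambda_{i-1}})\ge\tfrac12$ (Assumption~\ref{ass:vardomTransforms}(2)) gives $\|D\beta_i-I\|_{L^\infty}\le2h$. Choosing $N$ large enough that $2h$ is below the threshold $\delta_0$, the previous two paragraphs apply with $\eta=h$, $\eta'=2h$, giving $\|u^{\lambda_{i-1}}-u^{\lambda_i}\|_{L^2}\le3C''h$; summing over $i=1,\dots,N$ and using the triangle inequality, $\|u^\lambda-u^{\lambda'}\|_{L^2}\le3C''Nh=3C''\|\lambda-\lambda'\|_2$. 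The bound is independent of $N$, so $S$ is Lipschitz, and careful bookkeeping of the constants $a,b,c_1,c_2$ reproduces the stated $C=\tfrac{4RC_\text{P}}{r^2}(3r+(\tfrac38(9+\delta_0^{-1})+1)RC_\text{P})$.

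The main obstacle is the control of the right-hand-side perturbation in the single-step estimate: applying the change of variables directly to $f$ only gives $\tilde f=|\det D\beta|(f\circ\beta)$, and $\|f\circ\beta-f\|_{L^2}$ carries \emph{no} quantitative rate in $\eta$ because $f$ is merely $L^2$. The fix is to phrase this perturbation in $H^{-1}$ and move the increment onto the $H^1$ test function, which is exactly where the convexity of $\Omega^\text{m}$ (needed to keep the interpolating segment inside the domain) and the precise tuning of $\delta_0$ enter. A secondary nuisance is that the pulled-back coefficient $\tilde{\pmb a}$ leaves $\Sigma_{r,R}^d$, so Proposition~\ref{prop:ellpLip} cannot be quoted verbatim and its energy estimate must be re-run with the two coefficients in asymmetric roles; the remainder is the routine $(1\pm\eta')^{\pm d}$ and $\sigma_{\min}\ge\tfrac12$ bookkeeping needed to land on the explicit constant.
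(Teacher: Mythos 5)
Your proof takes essentially the same route as the paper: Section~\ref{sec:ellip2domain} develops exactly the single near-identity perturbation estimate you describe (pullback to a common domain, transformed coefficient $\tilde{\pmb a}$, an $H^{-1}$ treatment of the forcing perturbation exploiting the convexity of $\Omega^\text{m}$, and an asymmetric energy estimate keeping $\pmb a$ for coercivity), and the paper's proof of Proposition~\ref{prop:eVDtransLip} is precisely your chaining along the straight segment in $\Lambda$ via $\beta_i=T_{\lambda_i}\circ T_{\lambda_{i-1}}^{-1}$. The only cosmetic differences are that the paper pushes $u$ forward via $\tilde u=u\circ\beta^{-1}$ rather than pulling $u_2$ back via $u_2\circ\beta$, and tracks constants through a single $\delta_\beta$ rather than separate $\eta,\eta'$, so your bookkeeping is not verified to land exactly on the stated $C$, but the argument is sound.
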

The following theorem then follows from Proposition~\ref{prop:decWidthLip}: 
\begin{thm}\label{thm:ellipVarDomDecWidthLip}
	Under Assumption~\ref{ass:vardomEllip} and~\ref{ass:vardomTransforms}, 
	we have
	$d_{n,Cl}^\text{Deco}(S(\Lambda))\le Cd_{n,l}^\text{rDeco}(\Lambda) ,$
	where the constant $C$ is the same as in Proposition~\ref{prop:eVDtransLip}. 
\end{thm}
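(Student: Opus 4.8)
The plan is to obtain Theorem~\ref{thm:ellipVarDomDecWidthLip} as an immediate consequence of Proposition~\ref{prop:eVDtransLip} combined with Proposition~\ref{prop:decWidthLip}. By Proposition~\ref{prop:eVDtransLip}, the solution mapping $S:\Lambda\to\mathcal{U}=L^2(\Omega^\text{m})$, $\lambda\mapsto u^\lambda$ is $C$-Lipschitz continuous with the explicit constant $C$ recorded there. Hence $S(\Lambda)=\mathcal{F}(\mathcal{K}_\mathcal{V})$ upon choosing $\mathcal{V}=\ell^2(\R^{d_\lambda})$, $\mathcal{K}_\mathcal{V}=\Lambda$ (equipped with the $\ell^2$-metric, which is inherited from the norm of the Banach space $\ell^2(\R^{d_\lambda})$ and is the same metric used to measure $\|\lambda-\lambda'\|_2$ in Assumption~\ref{ass:vardomTransforms}), $\mathcal{F}=S$ and $L_\mathcal{F}=C$.

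First I would check that $\Lambda$ is compact — or, if necessary, pass to its closure $\bar\Lambda$, to which $S$ extends by uniform continuity with the same Lipschitz constant — so that $S(\Lambda)$ is a compact subset of $\mathcal{U}$ and both $d_{n,l}^\text{rDeco}(\Lambda)$ and $d_{n,Cl}^\text{Deco}(S(\Lambda))$ are well defined; this is where the boundedness of $\Lambda$ in Assumption~\ref{ass:vardomTransforms} is used. Then I would invoke the corollary stated in Proposition~\ref{prop:decWidthLip}, namely $d_{n,L_\mathcal{F}l}^\text{Deco}(\mathcal{F}(\mathcal{K}_\mathcal{V}))\le L_\mathcal{F}\,d_{n,l}^\text{rDeco}(\mathcal{K}_\mathcal{V})$, with $L_\mathcal{F}=C$ and $\mathcal{K}_\mathcal{V}=\Lambda$, which gives exactly $d_{n,Cl}^\text{Deco}(S(\Lambda))\le C\,d_{n,l}^\text{rDeco}(\Lambda)$. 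If desired, one may further combine this with the inequality $d_{n,l}^\text{Deco}(\mathcal{K})\le d_{n,l}^\text{Deco}(\bar{\mathcal{K}})$ from Section~\ref{sec:decWidthVD} (taking $\bar{\mathcal{K}}=S(\Lambda)$) to bound the variable-domain decoder width of the original solution set $\{u^\eta|_{\Omega_\lambda}\}$ as well.

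There is essentially no obstacle in this final step: the mathematical substance of the theorem resides entirely in Proposition~\ref{prop:eVDtransLip}, the Lipschitz estimate for the domain-to-solution map, whose proof is the genuinely hard part — it relies on the single-perturbation analysis of Section~\ref{sec:ellip2domain}, the chaining argument through intermediate domains, and the structural hypotheses of Assumption~\ref{ass:vardomTransforms} on the transformations $T_\lambda$. Granting that proposition, what remains is only the bookkeeping of matching the $\ell^2$-metric on $\Lambda$ with the hypotheses of Proposition~\ref{prop:decWidthLip} and propagating the constant $C$ through the composition, so I would keep this proof to a few lines (and indeed the excerpt already presents it as "follows from Proposition~\ref{prop:decWidthLip}").
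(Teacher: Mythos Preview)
Your proposal is correct and follows exactly the paper's approach: the theorem is stated immediately after Proposition~\ref{prop:eVDtransLip} with the remark that it ``follows from Proposition~\ref{prop:decWidthLip},'' i.e., one simply feeds the $C$-Lipschitz solution map $S:\Lambda\to L^2(\Omega^\text{m})$ into Proposition~\ref{prop:decWidthLip} with $\mathcal{K}_\mathcal{V}=\Lambda$ and $L_\mathcal{F}=C$. Your additional remarks on compactness and the variable-domain inequality are reasonable sanity checks but go slightly beyond what the paper records.
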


\begin{eg}[moving disk]\label{eg:eVDmovingDisk}
	We take the master domain to be the two dimensional unit disk $\Omega^\text{m}=\{\pmb{x}\in\R^2\mid\|\pmb{x}\|_2<1\}$. 
	Given $\lambda\in\Lambda=[-\pi,\pi]$, the corresponding domain is
	\[\Omega_\lambda=\{\pmb{x}\in\R^2\mid(x_1-\tfrac13\cos\lambda)^2+(x_2-\tfrac13\sin\lambda)^2<\tfrac19\},\]
	which rotates around the origin as $\lambda$ varies. 
	Then $d_{1,C\pi}^\text{Deco}(\{u^\lambda|\lambda\in\Lambda\})=0$ ($C$ being the same as in Proposition~\ref{prop:eVDtransLip}), indicating that the solution set can be perfectly fitted using a one-dimensional latent space. 
\end{eg}
\begin{eg}[unit disk with a moving hole]\label{eg:eVDmovingHole}
	We take $\Omega^\text{m}=\{\pmb{x}\in\R^2\mid\|\pmb{x}\|_2<1\}$, $\lambda\in\Lambda=[-\pi,\pi]$ as in the previous example, but replace $\Omega_\lambda$ by the interior of its complement, namely $\Omega_\lambda=\{\pmb{x}\in\Omega^\text{m}\mid(x_1-\frac13\cos\lambda)^2+(x_2-\frac13\sin\lambda)^2>\frac19\}$. 
	The resulting domain is the unit disk with a hole inside, and the hole changes its position as $\lambda$ varies. 
	Then we shall have $d_{1,C\pi}^\text{Deco}(\{u^\lambda|\lambda\in\Lambda\})=0$ as in the previous example. 
\end{eg}
\begin{eg}[square with a deformable rectangular hole]\label{eg:eVDdefmHole}
	The master domain is taken to be $\Omega^\text{m}=(-\pi,\pi)^2$, and we obtain $\Omega_\lambda$ by hollowing out a rectangular hole in the form
	\[ \Omega^\text{m}\setminus\Omega_\lambda
		=\left[-\frac\pi2-\lambda_1,\frac\pi2+\lambda_1\right]\times\left[-\frac\pi2-\lambda_2,\frac\pi2+\lambda_2\right]
	,\]
	where $\lambda=(\lambda_1,\lambda_2)\in\Lambda=[-\frac12,\frac12]^2$ is the parameter. 
	Then we shall have $d_{2,C/\sqrt 2}^\text{Deco}(\{u^\lambda|\lambda\in\Lambda\})=0$. 
\end{eg}
\begin{eg}[area under a variable curve]\label{eg:eVDctvc}
	Let $\lambda=(a_1,b_1,a_2,\dots)$ be an infinite sequence of real numbers
	with norm defined as $\|\lambda\|^2=\sum_{k=1}^{\infty}(a_k^2+b_k^2)$, and
	\[\Lambda=\left\{\lambda\ \middle|\ \sum_{k=1}^{\infty}\frac{a_k^2+b_k^2}{w_k}\le\frac9{64}\right\} ,\]
	where $w_k$ is a decreasing sequence with $w_1=1$, $\lim_{k\to\infty}w_k=0$. 
	Letting
	\[\rho_\lambda(x_1)=\sum_{k=1}^{\infty}\frac{\sqrt 6}{k^2\pi}(a_k\cos kx_1+b_k\sin kx_1) ,\]
	we take $\Omega^\text{m}=(-\pi,\pi)\times(0,\pi)$, and
	\[\Omega_\lambda=\left\{\pmb{x}\in\R^2\middle|-\pi<x_1<\pi,0<x_2<\frac\pi 2+\rho_\lambda(x_1)\right\} .\]
	Then $d_{2n,3C/8}^\text{Deco}(\{u^\lambda|\lambda\in\Lambda\})\le 3C\sqrt{w_n}/8$. 
\end{eg}
\begin{figure}[tb!]
	\centering
	\begin{subfigure}[p]{0.3\textwidth}
		\centering
		\includegraphics[width=\textwidth]{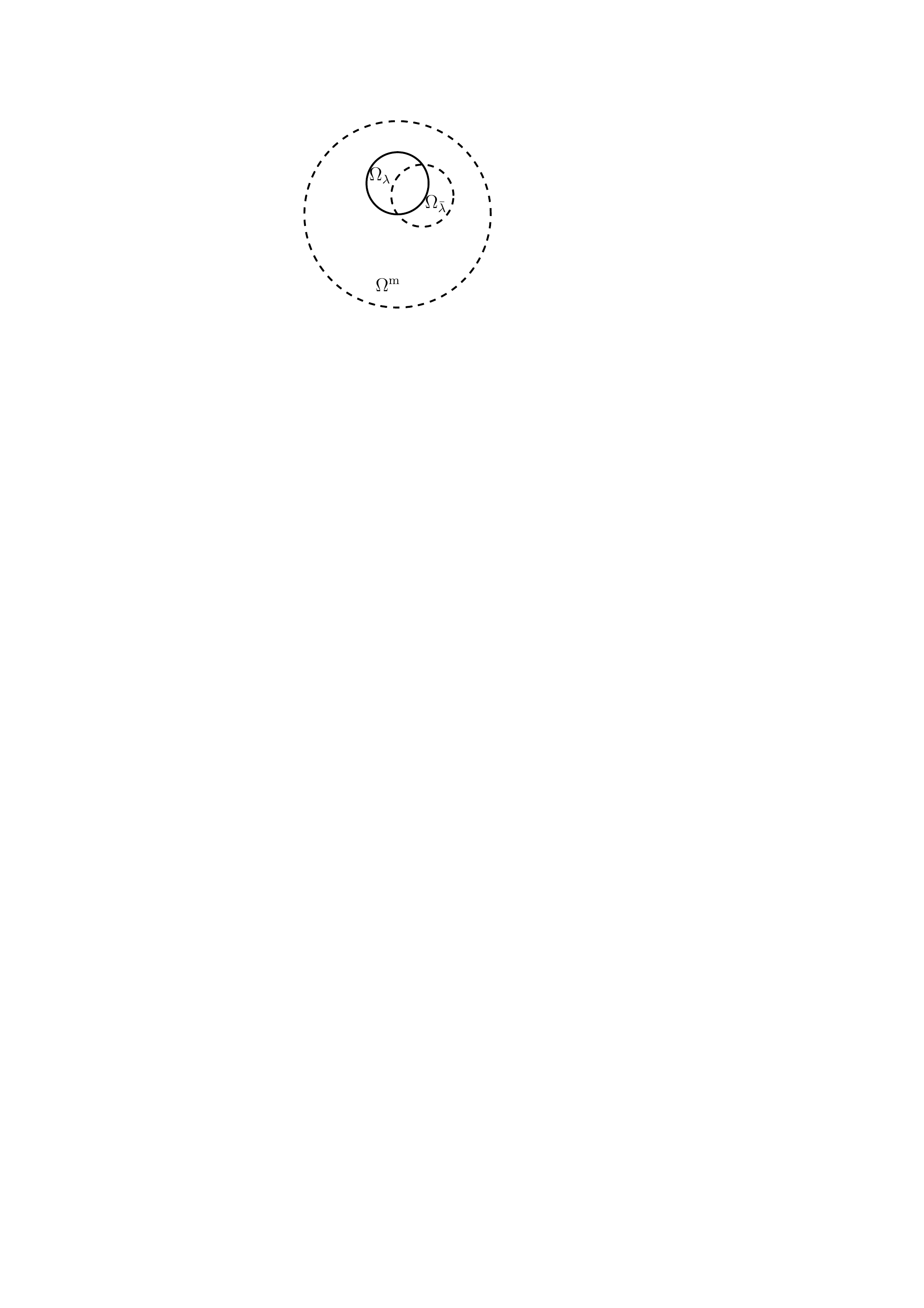}
	\end{subfigure}%
	~ 
	\begin{subfigure}[p]{0.25\textwidth}
		\centering
		\includegraphics[width=\textwidth]{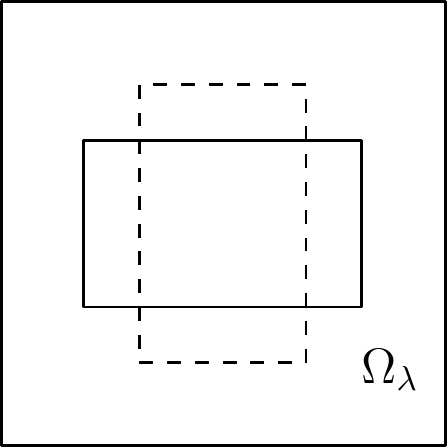}
	\end{subfigure}%
	~ 
	\begin{subfigure}[p]{0.35\textwidth}
		\centering
		\includegraphics[width=\textwidth]{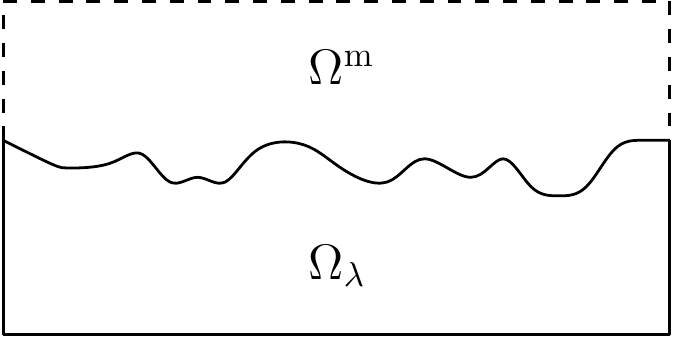}
	\end{subfigure}%
	~ 
	\caption{Illustrative plot of the $\Omega_\lambda$'s in Example~\ref{eg:eVDmovingDisk} (left), \ref{eg:eVDdefmHole} (middle), and \ref{eg:eVDctvc} (right). }%
	\label{figs:ellpVD_eg}
\end{figure}

\subsubsection{Variable Parameters}
Now both $\pmb{a}$ and $f$ in the PDEs~\eqref{eq:ellipVarDom} can vary along with the shape of the solution domain $\Omega$ (or its parameter $\lambda$). 
According to Assumption~\ref{ass:vardomEllip}, we denote $\mathsf{D}_{r,R}^a$ as the set of all possible $\pmb{a}$'s, and $\mathsf{D}_R^f=\bar B_R(L^2(\Omega))$ the set of all possible $f$'s. 
Then we shall have the following proposition: 
\begin{prop}\label{prop:eVDPLip}
	Let $\mathcal{A}_0=\mathsf{D}_{r,R}^a\times \mathsf{D}_R^f\times\Lambda$ with the metric given as
	\begin{equation}\label{eq:eVDPdefA0metric}
		d_{\mathcal{A}_0}((\pmb{a},f,\lambda),(\pmb{\bar a},\bar f,\bar\lambda)) =
		\|\pmb{\bar a}-\pmb{a}\|_{L^\infty(\Omega^\text{m};\ell^2(\R^{d\times d}))}
		+\|\bar f-f\|_{L^2}
		+\|\bar\lambda-\lambda\|_2
	.\end{equation}
	Then, under Assumption~\ref{ass:vardomTransforms}, the (extended) solution mapping $S_0:\mathcal{A}_0\to\mathcal{U}=L^2(\Omega^\text{m})$, $(\pmb{a},f,\lambda)\mapsto u^{(\pmb{a},f,\lambda)}$ is $C_1$-Lipschitz continuous, where
	$C_1=\max\left(\frac{C_\text{P}^2R}{r^2},C\right)$, and $C$ is the same as in Proposition~\ref{prop:eVDtransLip}
\end{prop}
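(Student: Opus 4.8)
The plan is to split the change of parameters into a change of the coefficients on a fixed domain followed by a change of the domain with the coefficients held fixed, and to bound each piece by a result already in hand. For $(\pmb{a},f,\lambda),(\pmb{\bar a},\bar f,\bar\lambda)\in\mathcal{A}_0$ I would insert the intermediate tuple $(\pmb{\bar a},\bar f,\lambda)$ and use the triangle inequality in $\mathcal{U}=L^2(\Omega^\text{m})$,
\begin{align*}
\|u^{(\pmb{a},f,\lambda)}-u^{(\pmb{\bar a},\bar f,\bar\lambda)}\|_{L^2(\Omega^\text{m})}
&\le\|u^{(\pmb{a},f,\lambda)}-u^{(\pmb{\bar a},\bar f,\lambda)}\|_{L^2(\Omega^\text{m})}\\
&\quad+\|u^{(\pmb{\bar a},\bar f,\lambda)}-u^{(\pmb{\bar a},\bar f,\bar\lambda)}\|_{L^2(\Omega^\text{m})}.
\end{align*}
The second term involves only a change of the domain shape with coefficients $\pmb{\bar a},\bar f$ frozen, so Proposition~\ref{prop:eVDtransLip} bounds it by $C\|\lambda-\bar\lambda\|_2$, with $C$ the constant appearing there.

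For the first term the solution domain is the \emph{fixed} set $\Omega^\text{m}\setminus\partial\Omega_\lambda$ and only $\pmb{a},f$ vary, which is precisely the fixed-domain elliptic situation: $u^{(\pmb{a},f,\lambda)}$ and $u^{(\pmb{\bar a},\bar f,\lambda)}$ are the weak solutions in $H_0^1(\Omega^\text{m}\setminus\partial\Omega_\lambda)$ of~\eqref{eq:ellipVarDom} with the two coefficient sets. Applying Proposition~\ref{prop:ellpLip} on the domain $\Omega^\text{m}\setminus\partial\Omega_\lambda$ with $c=\bar c=0$ gives
\[
\|u^{(\pmb{a},f,\lambda)}-u^{(\pmb{\bar a},\bar f,\lambda)}\|_{H_0^1}\le\frac{C_\text{P}'}{r^2}\Bigl(R\|\pmb{\bar a}-\pmb{a}\|_{L^\infty(\Omega^\text{m};\ell^2(\R^{d\times d}))}+r\|\bar f-f\|_{L^2}\Bigr),
\]
where $C_\text{P}'$ is the Poincar\'e constant of $\Omega^\text{m}\setminus\partial\Omega_\lambda$. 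Since extending by zero embeds $H_0^1(\Omega^\text{m}\setminus\partial\Omega_\lambda)$ into $H_0^1(\Omega^\text{m})$, we have $C_\text{P}'\le C_\text{P}$; passing from the $H_0^1$-norm to the $L^2(\Omega^\text{m})$-norm by one more application of Poincar\'e and using $r<R$ to absorb $C_\text{P}^2/r$ into $C_\text{P}^2R/r^2$, the first term is at most $\frac{C_\text{P}^2R}{r^2}\bigl(\|\pmb{\bar a}-\pmb{a}\|_{L^\infty(\Omega^\text{m};\ell^2(\R^{d\times d}))}+\|\bar f-f\|_{L^2}\bigr)$. Adding the two estimates and bounding both prefactors by $C_1=\max(C_\text{P}^2R/r^2,C)$ yields the Lipschitz bound in the metric~\eqref{eq:eVDPdefA0metric}, after which the analogue of Theorem~\ref{thm:ellipVarDomDecWidthLip} would follow from Proposition~\ref{prop:decWidthLip}.

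The routine points are the well-posedness of the extended solutions (Lax--Milgram on $H_0^1(\Omega^\text{m}\setminus\partial\Omega)$, with coercivity constant $r$ and continuity constant $R$ uniformly in all parameters) and the fact that $\Omega^\text{m}\setminus\partial\Omega_\lambda$, although possibly disconnected, still supports the energy estimates underlying Proposition~\ref{prop:ellpLip}. The main obstacle I anticipate is purely a matter of bookkeeping: making sure the constant $C$ of Proposition~\ref{prop:eVDtransLip} is genuinely independent of the now-varying coefficients. If the proof of that proposition fixes a specific $\pmb{a}$ and $f$ at the outset, one must re-check that each step there --- in particular the perturbation comparison through intermediate domains in Section~\ref{sec:ellip2domain} --- uses only $\pmb{a}\in\Sigma_{r,R}^d(\Omega^\text{m})$, the Lipschitz constant $R$ of $\pmb{a}$, and $\|f\|_{L^2}\le R$, and never the particular coefficient values; once this is confirmed, $C$ is uniform over $\mathsf{D}_{r,R}^a\times\mathsf{D}_R^f$ and the argument above goes through verbatim.
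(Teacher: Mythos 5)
Your proof is correct and follows essentially the same strategy as the paper's: split the change of parameters via a triangle inequality with an intermediate tuple, then bound the coefficient-change term with Proposition~\ref{prop:ellpLip} and the domain-change term with Proposition~\ref{prop:eVDtransLip}. The only (cosmetic) difference is your choice of intermediate tuple $(\pmb{\bar a},\bar f,\lambda)$, whereas the paper uses $(\pmb{a},f,\bar\lambda)$; your explicit observation that the Poincar\'e constant of $\Omega^\text{m}\setminus\partial\Omega_\lambda$ is dominated by that of $\Omega^\text{m}$ (via zero-extension) is a helpful remark that the paper leaves implicit.
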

Again, the following theorem is direct according to Proposition~\ref{prop:decWidthLip}: 
\begin{thm}\label{thm:eVDPwidthFromA}
	Let $\mathcal{A}\subset\mathcal{A}_0$ be the set of parameters taken into consideration. 
	Then, under Assumption~\ref{ass:vardomTransforms}, we have
	$d_{n,C_1l}^\text{Deco}(S_0(\mathcal{A}))\le C_1d_{n,l}^\text{rDeco}(\mathcal{A}) ,$
	where the constant $C_1$ is the same as in Proposition~\ref{prop:eVDPLip}. 
\end{thm}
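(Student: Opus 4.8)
The plan is to obtain Theorem~\ref{thm:eVDPwidthFromA} as an immediate consequence of Proposition~\ref{prop:eVDPLip} together with Proposition~\ref{prop:decWidthLip}, exactly mirroring the passage from Proposition~\ref{prop:ellpLip} to Theorem~\ref{thm:ellipDecoKvsA}, and from Proposition~\ref{prop:parabSLip} to the corresponding parabolic theorem. The one preliminary observation I would record is that the metric $d_{\mathcal{A}_0}$ of~\eqref{eq:eVDPdefA0metric} is nothing but the norm of the product Banach space $\mathcal{V}=L^\infty(\Omega^\text{m};\ell^2(\R^{d\times d}))\times L^2(\Omega^\text{m})\times\ell^2(\R^{d_\lambda})$ carrying the $\ell^1$-type product norm $\|(\pmb a,f,\lambda)\|_{\mathcal V}=\|\pmb a\|_{L^\infty(\Omega^\text{m};\ell^2(\R^{d\times d}))}+\|f\|_{L^2}+\|\lambda\|_2$, restricted to the subset $\mathcal{A}_0\subset\mathcal{V}$. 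Consequently any (compact) parameter set $\mathcal{A}\subset\mathcal{A}_0$ is a compact subset of the Banach space $\mathcal{V}$, which is precisely the situation in which Proposition~\ref{prop:decWidthLip} applies; this is the same remark as the footnote attached to Proposition~\ref{prop:ellpLip}.

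With this in hand, I would invoke Proposition~\ref{prop:eVDPLip}, which asserts that the extended solution mapping $S_0:\mathcal{A}_0\to\mathcal{U}=L^2(\Omega^\text{m})$ is $C_1$-Lipschitz continuous with respect to $d_{\mathcal{A}_0}$. Its restriction $S_0|_{\mathcal{A}}:\mathcal{A}\to\mathcal{U}$ is then still $C_1$-Lipschitz, and $S_0(\mathcal{A})$ is compact in $\mathcal{U}$ as the continuous image of a compact set. Applying Proposition~\ref{prop:decWidthLip} with $\mathcal{V}$ as above, $\mathcal{K}_{\mathcal V}=\mathcal{A}$, $\mathcal{F}=S_0|_{\mathcal{A}}$, $L_{\mathcal F}=C_1$ and $\mathcal{K}=\mathcal{F}(\mathcal{K}_{\mathcal V})=S_0(\mathcal{A})$, its corollary yields $d_{n,C_1l}^\text{Deco}(S_0(\mathcal{A}))\le C_1\,d_{n,l}^\text{rDeco}(\mathcal{A})$, which is exactly the claimed estimate.

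I do not expect any real obstacle at this stage: all the analytic work — namely controlling the effect on the weak solution of simultaneously perturbing the diffusion coefficient $\pmb a$, the source $f$, and (through the family of diffeomorphisms $T_\lambda$ of Assumption~\ref{ass:vardomTransforms}) the shape of the computational domain — has already been absorbed into the proof of Proposition~\ref{prop:eVDPLip}. The only items deserving an explicit line are the identification of $d_{\mathcal{A}_0}$ with a genuine Banach-space norm, so that Proposition~\ref{prop:decWidthLip} is literally applicable, and the standing compactness hypothesis on the parameter set $\mathcal{A}$, which guarantees that the widths in the statement are well defined. I would therefore keep the written proof to a couple of sentences, as the authors indicate when they call the theorem ``direct''.
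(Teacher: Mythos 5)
Your proposal is correct and matches the paper's proof exactly: the theorem is obtained by feeding the $C_1$-Lipschitz continuity of $S_0$ from Proposition~\ref{prop:eVDPLip} into Proposition~\ref{prop:decWidthLip}, just as Theorem~\ref{thm:ellipDecoKvsA} was obtained from Proposition~\ref{prop:ellpLip}. Your preliminary remarks — that $d_{\mathcal{A}_0}$ is inherited from a product Banach-space norm and that $\mathcal{A}$ must be compact for the widths to be defined — are sound and align with the footnote already attached to the elliptic fixed-domain case.
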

\begin{eg}\label{eg:eVPD}
	We take the domain to be a square with a deformable rectangular hole, and $\Omega^\text{m},\Omega_\lambda$, $\Lambda$ are the same as in Example~\ref{eg:eVDdefmHole}. 
	Taking $r=1,R=2$, we parametrize the PDE coefficients as $\pmb{a}=(\frac32+\sum_{k=1}^{K}\mu_k^a\phi_k^a)I_2$, $f=\sum_{k=1}^{K}\mu_k^f\phi_k^f$, where
	\begin{enumerate}[(a)]
		\item $\phi_k^a:\Omega^\text{m}\to\R$ are 4-Lipschitz continuous, $\|\phi_k^a\|_{L^\infty}\le 1$, $\phi_k^a$ and $\phi_l^a$ has disjoint supports for $1\le k<l\le K$; 
		\item $\phi_k^f\in L^2(\Omega^\text{m})$, $\sum_{k=1}^{K}\|\phi_k^f\|_{L^2}^2\le 1$; 
		\item $\mu=(\mu_k^a,\mu_k^f)_{k=1}^K\in\Lambda_\mu$, $\Lambda_\mu=\{\mu\mid\sum_{k=1}^{K}(\mu_k^a)^2\le\frac14,\sum_{k=1}^{K}(\mu_k^a)^2\le\frac14\}\subset\R^{2K}$. 
	\end{enumerate}
	Denote the corresponding solution as $u^{\mu,\lambda}$, we shall have
	$d_{2K+2,3C_1}^\text{Deco}(\{u^{\mu,\lambda}|\mu\in\Lambda_\mu,\lambda\in\Lambda\})=0,$ 
	with $C_1$ given as in Proposition~\ref{prop:eVDPLip}. 
\end{eg}

\subsection{Advection Equations on a Fixed Domain}\label{sec:advVC}
Consider the equation
\begin{equation}\label{eq:advEq}
	u_t+c(x)u_x=0
\end{equation}
on the spatial-temporal domain $(t,x)\in[0,1]\times[-1,1]$, with initial condition $u(0,x)=\chi_{[-1,0]}(x)$ and boundary condition $u(t,-1)=1$. 
Here, the variable coefficient is $c\in \mathcal{A}_0$, where 
\[\mathcal{A}_0=\bigl\{c\in C^1([-1,1])\mid r\le c(x)\le R,\forall x\in[-1,1]\bigr\} ,\]
and $0<r<R<\infty$ holds. 
We shall view $\mathcal{A}_0$ as a subset of the Banach space $L^1([-1,1])$ (inheriting its metric), and denote the (extended) solution mapping to be $S_0:c\mapsto u$. 
By the method of characteristics, it is easy to show that the weak solution to the PDEs is given by
\[u(t,x)=\chi_{\{t\le\int_0^x\frac{\mathrm{d}s}{c(s)}\}} =
	\begin{cases}
		1,&t\le\int_0^x\frac{\mathrm{d}s}{c(s)}
		\\0,&t>\int_0^x\frac{\mathrm{d}s}{c(s)}
	\end{cases} .\]
\begin{prop}\label{prop:advEg1Holder}
	Let $\bar u$ be the weak solution to~\eqref{eq:advEq} with alternative coefficient $\bar c\in\mathcal{A}_0$, i.e. $\bar u=S_0(\bar c)$. 
	Viewing $u,\bar u\in L^p([0,1]\times[-1,1])$ for $p\in[1,+\infty)$, we have
	\[\|u-\bar u\|_{L^p}\le
	r^{-2/p}\|c-\bar c\|_{L^1([-1,1])}^{1/p} .\]
\end{prop}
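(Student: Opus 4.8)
The plan is to compute the $L^p$-distance between the two characteristic-function solutions explicitly and then bound it using the hypothesis $r\le c,\bar c\le R$. Write $g(x)=\int_0^x\frac{\mathrm{d}s}{c(s)}$ and $\bar g(x)=\int_0^x\frac{\mathrm{d}s}{\bar c(s)}$; these are the ``breaking curves'' in the $(t,x)$-plane, and $u(t,x)=\chi_{\{t\le g(x)\}}$, $\bar u(t,x)=\chi_{\{t\le\bar g(x)\}}$. For a fixed $x$, the functions $t\mapsto u(t,x)$ and $t\mapsto\bar u(t,x)$ are both indicators of initial segments of $[0,1]$, so they differ exactly on the interval between $g(x)$ and $\bar g(x)$ (intersected with $[0,1]$), and there $|u-\bar u|=1$. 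Hence $\int_0^1|u(t,x)-\bar u(t,x)|^p\,\mathrm{d}t\le|g(x)-\bar g(x)|$, and integrating over $x\in[-1,1]$ gives $\|u-\bar u\|_{L^p}^p\le\int_{-1}^1|g(x)-\bar g(x)|\,\mathrm{d}x$.

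Next I would bound $|g(x)-\bar g(x)|$ pointwise. We have
\[
|g(x)-\bar g(x)|=\left|\int_0^x\Bigl(\frac1{c(s)}-\frac1{\bar c(s)}\Bigr)\,\mathrm{d}s\right|
\le\int_{-1}^1\frac{|\bar c(s)-c(s)|}{c(s)\bar c(s)}\,\mathrm{d}s
\le\frac1{r^2}\|c-\bar c\|_{L^1([-1,1])},
\]
using $c,\bar c\ge r$. This is uniform in $x$, so $\int_{-1}^1|g(x)-\bar g(x)|\,\mathrm{d}x\le\frac{2}{r^2}\|c-\bar c\|_{L^1}$. Combining, $\|u-\bar u\|_{L^p}^p\le\frac{2}{r^2}\|c-\bar c\|_{L^1}$, and taking $p$-th roots yields $\|u-\bar u\|_{L^p}\le 2^{1/p}r^{-2/p}\|c-\bar c\|_{L^1}^{1/p}$. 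To match the claimed constant $r^{-2/p}$ without the factor $2^{1/p}$, I would tighten the first step: since the measure of $\{x:g(x)\ne\bar g(x)\}$ could be small, a cleaner route is to note $\int_0^1|u(t,x)-\bar u(t,x)|^p\,\mathrm{d}t=|\min(g(x),1)-\min(\bar g(x),1)|\le|g(x)-\bar g(x)|$ but then integrate the pointwise bound only where it is genuinely attained; alternatively the intended statement may use the normalized domain so that $\int_{-1}^1\mathrm{d}x$ contributes a factor absorbed into the $L^1$-norm convention. I would check the paper's normalization of $\|\cdot\|_{L^1([-1,1])}$; if it is the averaged ($L^1$ divided by length $2$) norm, the factor $2$ disappears and the stated bound follows verbatim.

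The only real obstacle is bookkeeping of constants and domain normalizations — the core estimate is the elementary inequality $\bigl|\frac1c-\frac1{\bar c}\bigr|\le\frac1{r^2}|c-\bar c|$ together with the observation that two nested indicator functions differ on a set whose measure equals the gap between their thresholds. No compactness, no PDE theory beyond the explicit method-of-characteristics formula already given in the excerpt is needed. Once the constant is pinned down, Proposition~\ref{prop:advEg1Holder} shows $S_0$ is $\tfrac12$-Hölder from $(\mathcal{A}_0,\|\cdot\|_{L^1})$ into $L^p$, which is exactly the input needed for the subsequent entropy-number argument (Lemma~\ref{lem:ent_Holder} and Corollary~\ref{cor:decoVSentNum}).
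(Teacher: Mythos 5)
Your computation follows the same route as the paper's (write $u$, $\bar u$ as explicit indicator functions via the characteristic formula, bound the inner $t$-integral pointwise in $x$ by the threshold gap $|g(x)-\bar g(x)|$, then integrate in $x$ and use $|1/c-1/\bar c|\le r^{-2}|c-\bar c|$). The spurious factor $2^{1/p}$, however, is not a normalization convention — the paper uses the ordinary $L^1$-norm on $[-1,1]$ — and it cannot be recovered by arguing that $\{x:g(x)\neq\bar g(x)\}$ has small measure, since that set is generically of full measure. The observation you are missing is that the inner $t$-integral vanishes identically for $x<0$: since $c,\bar c>0$, one has $g(x)=\int_0^x\frac{\mathrm{d}s}{c(s)}<0$ and likewise $\bar g(x)<0$ whenever $x<0$, so for every $t\in[0,1]$ both indicators $\chi_{\{t\le g(x)\}}$ and $\chi_{\{t\le\bar g(x)\}}$ take the same constant value and $u(t,x)=\bar u(t,x)$. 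The outer $x$-integral therefore effectively runs over $[0,1]$, of length one, not over $[-1,1]$.

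With that correction,
\[
\|u-\bar u\|_{L^p}^p
=\int_{0}^1\int_0^1|u(t,x)-\bar u(t,x)|^p\,\mathrm{d}t\,\mathrm{d}x
\le\int_0^1|g(x)-\bar g(x)|\,\mathrm{d}x
\le\sup_{x\in[0,1]}|g(x)-\bar g(x)|
\le r^{-2}\|c-\bar c\|_{L^1([-1,1])},
\]
and the stated bound follows by taking $p$-th roots. This is precisely what the paper's proof does: after using that the $x<0$ contribution is zero, it replaces $\int_{-1}^1\mathrm{d}x$ by $\int_0^1\mathrm{d}x$ before estimating; the remaining steps of your argument (the reduction of the $t$-integral to the gap between thresholds, and the elementary bound on $|1/c-1/\bar c|$) match the paper's.
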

Due to the particularity of the advection equations, the decoder widths using $L^1$-norm ($p=1$) and $L^2$-norm ($p=2$) will be estimated separately. 
The decoder widths can achieve zero for finite $n$ in the former case, while in the latter case, only an exponential decay is guaranteed, as the Lipschitz continuity of the solution mapping is no longer valid. 
\subsubsection{The Case of Using $L^1$-norm}
Taking $p=1$ in Proposition~\ref{prop:advEg1Holder}, we apply Proposition~\ref{prop:decWidthLip} to get the following theorem:
\begin{thm}\label{thm:advL1DecKvsA}
	Let $\mathcal{A}\subset\mathcal{A}_0$ be the set of parameters taken into consideration, and $\mathcal{U}=L^1([0,1]\times[-1,1])$. 
	Then we shall have
	$d_{n,l/r^2}^\text{Deco}(S_0(\mathcal{A}))_{L^1}\le r^{-2}d_{n,l}^\text{rDeco}(\mathcal{A}) .$
\end{thm}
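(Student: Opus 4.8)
The plan is to combine the $\frac{1}{p}$-Hölder estimate of Proposition~\ref{prop:advEg1Holder} (in the case $p=1$, where it reduces to a genuine Lipschitz bound) with the general comparison principle of Proposition~\ref{prop:decWidthLip}. Set $\mathcal{V}=L^1([-1,1])$ with its norm, and view $\mathcal{A}\subset\mathcal{A}_0\subset\mathcal{V}$ as a compact metric space inheriting the $L^1$-metric. Take $\mathcal{F}=S_0|_{\mathcal{A}}:\mathcal{A}\to\mathcal{U}=L^1([0,1]\times[-1,1])$ to be the (extended) solution mapping restricted to $\mathcal{A}$, and $\mathcal{K}=\mathcal{F}(\mathcal{A})=S_0(\mathcal{A})$.

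First I would check that Proposition~\ref{prop:advEg1Holder} with $p=1$ says exactly $\|u-\bar u\|_{L^1}\le r^{-2}\|c-\bar c\|_{L^1([-1,1])}$, i.e. $\mathcal{F}$ is $L_\mathcal{F}$-Lipschitz with $L_\mathcal{F}=r^{-2}$. Next I would verify the compactness hypothesis of Proposition~\ref{prop:decWidthLip}: one needs $\mathcal{A}$ (equivalently $\mathcal{K}_\mathcal{V}$ in the notation there) to be a compact subset of $\mathcal{V}$; this holds whenever $\mathcal{A}$ is taken to be compact, which is implicit in speaking of its widths at all, and compactness of $\mathcal{K}=\mathcal{F}(\mathcal{A})$ then follows from continuity of $\mathcal{F}$. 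With these in place, Proposition~\ref{prop:decWidthLip} applied with this $\mathcal{F}$, $L_\mathcal{F}=r^{-2}$, and Lipschitz parameter $l$ yields directly
\[
d_{n,\,r^{-2}l}^\text{Deco}(\mathcal{K})\;\le\;d_{n,\,r^{-2}l}^\text{rDeco}(\mathcal{K})\;\le\;r^{-2}\,d_{n,l}^\text{rDeco}(\mathcal{A}),
\]
which is precisely the claimed inequality $d_{n,l/r^2}^\text{Deco}(S_0(\mathcal{A}))_{L^1}\le r^{-2}d_{n,l}^\text{rDeco}(\mathcal{A})$ once one notes $S_0(\mathcal{A})=\mathcal{K}$ and that the $\mathcal{U}$-norm here is the $L^1$-norm by the choice $\mathcal{U}=L^1([0,1]\times[-1,1])$.

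There is essentially no hard part: the entire content has been front-loaded into Proposition~\ref{prop:advEg1Holder} (the characteristics computation and the measure estimate of the symmetric difference of the two characteristic regions) and Proposition~\ref{prop:decWidthLip} (the construction that pushes an $l$-Lipschitz decoder for $\mathcal{A}$ forward through $\mathcal{F}$ to an $L_\mathcal{F}l$-Lipschitz decoder for $\mathcal{K}$ whose image stays inside $\mathcal{K}$). The only point requiring a sentence of care is the footnote-style remark that the $L^1$-metric on $\mathcal{A}_0$ is inherited from the Banach space $L^1([-1,1])$, so that Proposition~\ref{prop:decWidthLip} — stated for subsets of a Banach space $\mathcal{V}$ — does apply; and the observation that the Lipschitz constant $r^{-2}$ from Proposition~\ref{prop:advEg1Holder} is exactly the $L_\mathcal{F}$ needed to make the subscripts match. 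I would therefore present the proof as a two-line deduction, citing Proposition~\ref{prop:advEg1Holder} (with $p=1$) to obtain $L_\mathcal{F}=r^{-2}$ and then invoking Proposition~\ref{prop:decWidthLip}.
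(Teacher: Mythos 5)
Your proposal is correct and follows exactly the route the paper takes: apply Proposition~\ref{prop:advEg1Holder} with $p=1$ to obtain that $S_0$ is $r^{-2}$-Lipschitz from $L^1([-1,1])$ to $L^1([0,1]\times[-1,1])$, then invoke Proposition~\ref{prop:decWidthLip} with $L_{\mathcal{F}}=r^{-2}$. The small remarks you add about compactness and about the $L^1$-metric on $\mathcal{A}_0$ being inherited from a Banach space are correct hygiene checks that the paper leaves implicit.
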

\begin{eg}\label{eg:advEg1_L1}
	Assume the velocity field $c(x)=c^\lambda(x)$ is parametrized as
	\[c^\lambda(x)=\phi_0(x)+\sum_{k=1}^{K}\lambda_k\phi_k(x),\quad\lambda\in\Lambda=[-1,1]^K ,\]
	where $\phi_0,\dots,\phi_K$ are all continuous functions, and are chosen appropriately such that $c^\lambda(x)\in C([-1,1];[r,R])$ holds for any $\lambda\in\Lambda$. 
	Let $u^\lambda=S_0(c^\lambda)\in\mathcal{U}$ be the weak solution corresponding to $\lambda$. 
	Then, the set of weak solutions $\mathcal{K}=\{u^\lambda\mid\lambda\in\Lambda\}$ satisfies
	$ d_{K,l/r^2}^\mathrm{Deco}(\mathcal{K})_{L^1}=0 ,$
	where $l=\sqrt{K\sum_{k=1}^{K}\|\phi_k\|_{L^1([-1,1])}^2}$ is a constant. 
\end{eg}
\subsubsection{The Case of Using $L^2$-norm}
When $p=2$, Proposition~\ref{prop:advEg1Holder} only indicates the $\frac12$-H\"older continuity of the (extended) solution mapping $S_0:c\mapsto u$, 
and an analogy of Theorem~\ref{thm:advL1DecKvsA} is not available. 
In this case, 
the decoder width can be estimated using the following theorem:
\begin{thm}\label{thm:advL2EntKvsA}
	Let $\mathcal{A}\subset\mathcal{A}_0$ be the set of parameters taken into consideration, and take $\mathcal{U}=L^2([0,1]\times[-1,1])$. 
	Then we shall have
	$d_{26n,8}^\text{Deco}(S_0(\mathcal{A}))_{L^2}\le 3r^{-1}\sqrt{\tilde\epsilon_n(\mathcal{A})} .$
\end{thm}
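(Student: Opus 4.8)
The plan is to chain together three ingredients: the $\frac12$-Hölder estimate of Proposition~\ref{prop:advEg1Holder} (with $p=2$), a general lemma translating Hölder continuity of a mapping into a bound on entropy numbers of the image (this is Lemma~\ref{lem:ent_Holder}, already announced in the introduction to Section~\ref{sec:decWidthEst}), and Corollary~\ref{cor:decoVSentNum} which converts an entropy number bound into a decoder width bound. First I would record that by Proposition~\ref{prop:advEg1Holder} the solution map $S_0:\mathcal{A}_0\to L^2([0,1]\times[-1,1])$ satisfies $\|S_0(c)-S_0(\bar c)\|_{L^2}\le r^{-1}\|c-\bar c\|_{L^1([-1,1])}^{1/2}$, so restricted to $\mathcal{A}$ it is $\frac12$-Hölder with constant $r^{-1}$ from $(\mathcal{A},\|\cdot\|_{L^1})$ into the Hilbert space $L^2$.

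Next I would invoke the covering argument behind Lemma~\ref{lem:ent_Holder}: if $\mathcal{A}$ is covered by $2^n$ balls (in the $L^1$ metric) of radius $\tilde\epsilon_n(\mathcal{A})$ whose centers lie in $\mathcal{A}$ — this is exactly the definition of the inner entropy number — then their images under $S_0$ form $2^n$ sets each of $L^2$-diameter at most $2r^{-1}\sqrt{\tilde\epsilon_n(\mathcal{A})}$, hence $S_0(\mathcal{A})$ is covered by $2^n$ balls of radius $r^{-1}\sqrt{\tilde\epsilon_n(\mathcal{A})}$. Therefore $\epsilon_n(S_0(\mathcal{A}))\le r^{-1}\sqrt{\tilde\epsilon_n(\mathcal{A})}$. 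At the same time the diameter $\delta$ of $S_0(\mathcal{A})$ is at most $2$, since every weak solution $u$ is an indicator function bounded by $1$ on a domain of area $2$, so $\|u-\bar u\|_{L^2}\le\|u\|_{L^2}+\|\bar u\|_{L^2}\le 2\sqrt2\cdot\tfrac{1}{\sqrt2}$... more crudely $\delta\le 2$, giving $4\delta\le 8$.

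Finally I would apply Corollary~\ref{cor:decoVSentNum} with $\mathcal{K}=S_0(\mathcal{A})$: since $L^2$ is a Hilbert space and $\delta_{\mathcal{K}}\le 2$, we get
\[
d^\text{Deco}_{26n,\,4\delta_{\mathcal{K}}}(S_0(\mathcal{A}))\le 3\epsilon_n(S_0(\mathcal{A}))\le 3r^{-1}\sqrt{\tilde\epsilon_n(\mathcal{A})},
\]
and because the decoder width is monotone non-increasing in the Lipschitz bound $l$ (a larger class of admissible decoders), $4\delta_{\mathcal{K}}\le 8$ yields $d^\text{Deco}_{26n,8}(S_0(\mathcal{A}))\le d^\text{Deco}_{26n,4\delta_{\mathcal{K}}}(S_0(\mathcal{A}))\le 3r^{-1}\sqrt{\tilde\epsilon_n(\mathcal{A})}$, which is the claim.

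The main obstacle I anticipate is not any single hard estimate but getting the entropy-number bookkeeping exactly right — in particular being careful that it is the \emph{inner} entropy number $\tilde\epsilon_n$ of $\mathcal{A}$ that appears (so that the covering centers lie in $\mathcal{A}$ and their images are genuine elements of $S_0(\mathcal{A})$, keeping the $\sqrt{\cdot}$ meaningful and avoiding an extra factor), and that the square root of a sum-of-radii type bound behaves correctly: the image of a ball of radius $\rho$ has diameter $\le 2r^{-1}\rho^{1/2}$, i.e. sits in a ball of radius $r^{-1}\rho^{1/2}$, not $r^{-1}(2\rho)^{1/2}$. One also needs the monotonicity of $d_{n,l}^\text{Deco}$ in $l$, which is immediate from the definition~\eqref{eq:decWidth} since enlarging $l$ only enlarges the set of admissible decoders $D$. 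Everything else is a direct substitution of the already-established Hölder constant $r^{-1}$ and the diameter bound $\delta_{\mathcal{K}}\le 2$.
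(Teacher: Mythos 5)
Your proposal follows the paper's proof step for step: $p=2$ in Proposition~\ref{prop:advEg1Holder} to get the $\tfrac12$-H\"older constant $r^{-1}$, Lemma~\ref{lem:ent_Holder} (which you re-derive rather than cite) to get $\epsilon_n(S_0(\mathcal{A}))\le r^{-1}\sqrt{\tilde\epsilon_n(\mathcal{A})}$, a diameter bound $\le 2$, Corollary~\ref{cor:decoVSentNum}, and monotonicity of $d_{n,l}^{\mathrm{Deco}}$ in $l$ to pass from $4\delta_{\mathcal{K}}$ to $8$. The one real slip is the diameter step: the triangle inequality gives only $\|u-\bar u\|_{L^2}\le\|u\|_{L^2}+\|\bar u\|_{L^2}\le 2\sqrt2\approx 2.83$, which is \emph{not} $\le 2$, and the intermediate expression ``$2\sqrt2\cdot\tfrac1{\sqrt2}$'' does not follow from anything you wrote. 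The paper instead uses the pointwise bound $|u-\bar u|\le 1$ (a difference of two $\{0,1\}$-valued indicators) so that $\|u-\bar u\|_{L^2}^2\le\lvert[0,1]\times[-1,1]\rvert=2$, hence $\mathrm{diam}\le\sqrt2<2$; you need this sharper argument, since the crude triangle-inequality bound alone would force $4\delta_{\mathcal{K}}$ up to $8\sqrt2$, not $8$.
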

\begin{eg}\label{eg:advEg1}
	We consider the same solution set $\mathcal{K}$ as in Proposition~\ref{eg:advEg1_L1}, but within a different function space $\mathcal{U}=L^2([0,1]\times[-1,1])$. 
	Then
	$d_{26n,8}^\mathrm{Deco}(\mathcal{K})_{L^2}\le C\cdot 2^{-n/2K}$
	for sufficiently large $n$, where
	$C=6r^{-1}\sqrt K\left(\prod_{k=1}^{K}\|\phi_k\|_{L^1([-1,1])}\right)^{1/2K}$ 
	is a constant.
\end{eg}

\section{Proofs}\label{sec:proofs}
\subsection{Basic Properties of the Decoder Width}
\begin{proof}[Proof of Proposition~\ref{prop:decWidthLip}]
	Given an $l$-Lipschitz mapping $D:Z\to\mathcal{K_V}$, the mapping $\mathcal{F}\circ D:Z\to\mathcal{K}$ is $L_\mathcal{F}l$-Lipschitz. 
	The definition of the restricted decoder width gives
	\[\begin{split}
		d_{n,L_\mathcal{F}l}^\text{rDeco}(\mathcal{F(K_V)})
		&\le\sup_{v\in\mathcal{K_V}}\inf_{\pmb{z}\in Z_B}\|\mathcal{F}(v)-\mathcal{F}\circ D(\pmb{z})\|_\mathcal{U}
		\\&\le L_\mathcal{F}\sup_{v\in\mathcal{K_V}}\inf_{\pmb{z}\in Z_B}\|v-D(\pmb{z})\|_\mathcal{V}
	.\end{split}\]
	We conclude the proof by taking infimum over all such mappings $D$. 
\end{proof}
\begin{proof}[Proof of Proposition~\ref{prop:decoVSsmani}]
	Let $E:\mathcal{K}\to Z$ and $D:Z\to\mathcal{U}$ be two $l$-Lipschitz continuous mappings, 
	and fix an arbitrary element $v\in\mathcal{K}$. 
	We define a new mapping $\bar D:Z\to\mathcal{U}$ by $\bar D(\pmb{z})=D(l\delta_{\mathcal{K}}\pmb{z}+E(v))$. 
	Then $\bar D$ is $l^2\delta_{\mathcal{K}}$-Lipschitz continuous. 
	For given $u\in\mathcal{K}$, $\bar {\pmb{z}}=\frac1{l\delta_{\mathcal{K}}}(E(u)-E(v))$ satisfies
	\begin{equation*}
		\|\bar {\pmb{z}}\|_2=\frac1{l\delta_{\mathcal{K}}}\|E(u)-E(v)\|_2
		\le\frac1{\delta_{\mathcal{K}}}\|u-v\|_{\mathcal{U}}
		\le 1
	,\end{equation*}
	and
	\begin{equation*}
		\bar D(\bar {\pmb{z}})=D(l\delta_{\mathcal{K}}\bar {\pmb{z}}+E(v))
		=D(E(u)-E(v)+E(v))=D(E(u))
		.
	\end{equation*}
	This gives
	\begin{equation*}
		\inf_{\pmb{z}\in Z_B}\|u-\bar D(\pmb{z})\|_{\mathcal{U}}
		\le\|u-\bar D(\bar {\pmb{z}})\|_{\mathcal{U}}
		=\|u-D(E(u))\|_{\mathcal{U}}
		.
	\end{equation*}
	Taking supremum over all $u\in\mathcal{K}$, and then infimum over all possible mappings $E,D,\bar D$ would then give
	\begin{equation*}
		\begin{split}
			d_{n,l^2\delta_{\mathcal{K}}}^\mathrm{Deco}(\mathcal{K})&=\inf_{\bar D\ l^2\delta_{\mathcal{K}}\text{-Lip}}\sup_{u\in\mathcal{K}}\inf_{\pmb{z}\in Z_B}\|u-\bar D(\pmb{z})\|_\mathcal{U}
			\\&\le\inf_{E,D\ l\text{-Lip}}\sup_{u\in\mathcal{K}}\|u-D(E(u))\|_{\mathcal{U}}
			\\&=d_{n,l}^{\mathrm{SMani}}(\mathcal{K})
		.\qedhere\end{split}
	\end{equation*}
\end{proof}
\begin{proof}[Proof of the assertion in Example~\ref{eg:decoVSsmani1}]
	For any pair of $l$-Lipschitz continuous mappings $E:\mathcal{U}\to Z=\R$, $D:\R\to\mathcal{U}$, 
	the Borsuk-Ulam theorem asserts that there exists a point $v\in\mathcal{K}=S^1$ such that $E(v)=E(-v)$, and
	\[\begin{split}
		\sup_{u\in\mathcal{K}}\|u-D(E(u))\|_2&\ge\max(\|v-D(E(v))\|_2,\|-v-D(E(-v))\|_2)
		\\&\ge\frac12(\|v-D(E(v))\|_2+\|-v-D(E(-v))\|_2)
		\\&=\frac12(\|v-D(E(v))\|_2+\|v+D(E(v))\|_2)
		\\&\ge\frac12\|v-D(E(v))+v+D(E(v))\|_2
		\\&=\|v\|_2=1
	.\end{split}\]
	We conclude that $d_{n,l}^{\mathrm{SMani}}(\mathcal{K})\ge 1$ by taking infimum over all possible pairs $(E,D)$. 
	As for the decoder width, take $\bar D:\R\to\mathcal{U}$ as $z\mapsto(\cos\pi z,\sin\pi z)$. 
	Then $\bar D$ is $\pi$-Lipschitz continuous, and $\bar D(Z_B)=\bar D([-1,1])=\mathcal{K}$ holds, which implies $d_{1,\pi}^{\mathrm{Deco}}(\mathcal{K})=0$, and the first infimum in~\eqref{eq:decWidth} is achieved at $D=\bar D$. 
\end{proof}

\subsection{Elliptic Equations on a Fixed Domain}
\begin{lem}\label{lem:ellpH01u}
	The weak solution to~\eqref{eq:ellipFixDom} satisfies
	$\|u\|_{H_0^1}\le C_\text{P}R/r$, where $C_\text{P}$ is the Poincar\'e constant of $\Omega$. 
\end{lem}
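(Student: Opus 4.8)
The plan is to use the weak formulation~\eqref{eq:ellipFixDomWeak} with the test function $v = u$ itself, exploiting coercivity on the left-hand side and the Poincar\'e inequality on the right. First I would take $v=u$ in~\eqref{eq:ellipFixDomWeak}, which yields
\[
	\int_{\Omega}\bigl((\nabla u)^\mathrm{T}\pmb{a}\nabla u + c u^2\bigr)\,\mathrm{d}\pmb{x}
	= \int_{\Omega} f u\,\mathrm{d}\pmb{x}.
\]
On the left, since $\pmb{a}\in\Sigma^d_{r,R}(\Omega)$ gives $(\nabla u)^\mathrm{T}\pmb{a}\nabla u \ge r\,|\nabla u|^2$ pointwise, and since $c\ge 0$ makes $\int_\Omega c u^2 \ge 0$, the left-hand side is bounded below by $r\|u\|_{H_0^1}^2$ (using that $\|u\|_{H_0^1}^2 = \int_\Omega |\nabla u|^2\,\mathrm{d}\pmb{x}$ is the norm on $H_0^1(\Omega)$). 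On the right, Cauchy--Schwarz in $L^2(\Omega)$ followed by the Poincar\'e inequality $\|u\|_{L^2}\le C_\text{P}\|u\|_{H_0^1}$ gives $\int_\Omega f u\,\mathrm{d}\pmb{x} \le \|f\|_{L^2}\|u\|_{L^2}\le C_\text{P}\|f\|_{L^2}\|u\|_{H_0^1}$.

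Combining the two bounds, $r\|u\|_{H_0^1}^2 \le C_\text{P}\|f\|_{L^2}\|u\|_{H_0^1}$, and dividing by $\|u\|_{H_0^1}$ (the case $u=0$ being trivial) gives $\|u\|_{H_0^1}\le C_\text{P}\|f\|_{L^2}/r$. Finally, since $f\in\mathsf{D}_R^f = \bar B_R(L^2(\Omega))$ we have $\|f\|_{L^2}\le R$, which yields $\|u\|_{H_0^1}\le C_\text{P}R/r$ as claimed.

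There is no real obstacle here; this is the standard a priori energy estimate for a symmetric coercive elliptic problem. The only minor points to be careful about are the precise normalization of the $H_0^1$-norm (taking it to be the $L^2$-norm of the gradient, which is equivalent to the full $H^1$-norm on $H_0^1(\Omega)$ precisely by Poincar\'e) and the sign condition $c\ge 0$, which is exactly what lets us discard the zeroth-order term rather than having to absorb it.
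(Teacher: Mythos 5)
Your proof is correct and is essentially identical to the paper's: both take $v=u$ in the weak formulation, bound below by coercivity ($\pmb{a}\in\Sigma^d_{r,R}$ and $c\ge 0$), bound above by Cauchy--Schwarz plus Poincar\'e, and finish using $\|f\|_{L^2}\le R$.
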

\begin{proof}
	Take the test function to be $v=u\in H^1_0(\Omega)$, then
	\[\begin{split}
		\int_{\Omega}(\nabla u)^\mathrm{T}\pmb{a}\nabla u+cu^2\,\mathrm{d}\pmb{x}
		=\int_{\Omega}fu\,\mathrm{d}\pmb{x}
		\le\|f\|_{L^2}\|u\|_{L^2}
		\le RC_\text{P}\|u\|_{H_0^1}
	,\end{split}\]
	where we used $\|f\|_{L^2}\le R$ by the assumption $f\in\mathsf{D}_R^f$. 
	In the mean time, 
	\[\begin{split}
		\int_{\Omega}(\nabla u)^\mathrm{T}\pmb{a}\nabla u+cu^2\,\mathrm{d}\pmb{x}
		\ge\int_{\Omega}(\nabla u)^\mathrm{T}\pmb{a}\nabla u\,\mathrm{d}\pmb{x}
		\ge \int_{\Omega}r(\nabla u)^\mathrm{T}\nabla u\,\mathrm{d}\pmb{x}
		=r\|u\|_{H_0^1}^2
	,\end{split}\]
	since $\pmb{a}\in\Sigma^d_{r,R}(\Omega)$ and $c(\pmb{x})\ge 0$. 
	Combining these two inequalities would give $\|u\|_{H_0^1}\le C_\text{P}R/r$. 
\end{proof}
\begin{proof}[Proof of Proposition~\ref{prop:ellpLip}]
	Similar to~\eqref{eq:ellipFixDomWeak}, the alternative weak solution $\bar u\in H_0^1(\Omega)$ should satisfy
	\begin{equation}\label{eq:ellipFixDomWeakBarU}
		\int_{\Omega}\left((\nabla \bar u)^\mathrm{T}\pmb{\bar a}\nabla v+\bar c\bar uv\right)\,\mathrm{d}\pmb{x}
		=\int_{\Omega}\bar fv\,\mathrm{d}\pmb{x}
	.\end{equation}
	for any $v\in H_0^1(\Omega)$. Subtracting~\eqref{eq:ellipFixDomWeakBarU} from~\eqref{eq:ellipFixDomWeak} would give
	\[\begin{split}
		&\quad\int_{\Omega}(\nabla u-\nabla\bar u)^\mathrm{T}\pmb{a}\nabla v+c(u-\bar u)v\,\mathrm{d}\pmb{x}
		\\&=\int_{\Omega}(\nabla\bar u)^\mathrm{T}(\pmb{\bar a}-\pmb{a})\nabla v+(\bar c-c)\bar uv+(f-\bar f)v\,\mathrm{d}\pmb{x}
	.\end{split}\]
	Taking $v=w=u-\bar u$, we have
	\begin{equation}\label{eq:n6kf0t}\begin{split}
		&\quad\int_{\Omega}(\nabla\bar u)^\mathrm{T}(\pmb{\bar a}-\pmb{a})\nabla w\,\mathrm{d}\pmb{x}
		\\&\le\int_{\Omega}\|\nabla\bar u\|_2\cdot\|\pmb{\bar a}(x)-\pmb{a}(x)\|_2\cdot\|\nabla w\|_2\,\mathrm{d}\pmb{x}
		\\&\le\|\pmb{\bar a}-\pmb{a}\|_{L^\infty(\Omega;\ell^2(\R^{d\times d}))}\int_{\Omega}\|\nabla\bar u\|_2\cdot\|\nabla w\|_2\,\mathrm{d}\pmb{x}
		\\&\le\|\pmb{\bar a}-\pmb{a}\|_{L^\infty(\Omega;\ell^2(\R^{d\times d}))}\|\bar u\|_{H_0^1}\|w\|_{H_0^1}
		\\&\le\frac{C_\text{P}R}{r}\|\pmb{\bar a}-\pmb{a}\|_{L^\infty(\Omega;\ell^2(\R^{d\times d}))}\|w\|_{H_0^1}
		\quad\text{by Prop.\ref{lem:ellpH01u}}
	,\end{split}\end{equation}
	\[\begin{split}
		&\quad\int_{\Omega}(\bar c-c)\bar uw+(f-\bar f)w\,\mathrm{d}\pmb{x}
		\\&\le\|\bar c-c\|_{L^\infty}\|\bar u\|_{L^2}\|w\|_{L^2}
			+\|\bar f-f\|_{L^2}\|w\|_{L^2}
		\\&\le C_\text{P}^2\|\bar c-c\|_{L^\infty}\|\bar u\|_{H_0^1}\|w\|_{H_0^1}
			+C_\text{P}\|\bar f-f\|_{L^2}\|w\|_{H_0^1}
		\\&\le \frac{C_\text{P}}r\Bigl(
			RC_\text{P}^2\|\bar c-c\|_{L^\infty}
			+r\|\bar f-f\|_{L^2}
			\Bigr)\|w\|_{H_0^1}
	,\end{split}\]
	and thus
	\[\begin{split}
		&\quad\int_{\Omega}(\nabla w)^\mathrm{T}\pmb{a}\nabla w+cw^2\,\mathrm{d}\pmb{x}
		\\&\le \frac{C_\text{P}}r\Bigl(
			R\|\pmb{\bar a}-\pmb{a}\|_{L^\infty(\Omega;\ell^2(\R^{d\times d}))}
			+RC_\text{P}^2\|\bar c-c\|_{L^\infty}
		\\&\hspace{4em}+r\|\bar f-f\|_{L^2}
			\Bigr)\|w\|_{H_0^1}
	.\end{split}\]
	The condition $\pmb{a}\in\Sigma^d_{r,R}(\Omega)$ and $c(\pmb{x})\ge 0$ further guarantees
	\[
		\int_{\Omega}(\nabla w)^\mathrm{T}\pmb{a}\nabla w+cw^2\,\mathrm{d}\pmb{x}
		\ge r\int_{\Omega}(\nabla w)^\mathrm{T}\nabla w\,\mathrm{d}\pmb{x}
		=r\|w\|_{H_0^1}^2
	,\]
	and we conclude the proof by combining the two inequalities. 
\end{proof}
If $Z_\Lambda$ is a convex compact subset of $Z=\R^n$ (with $\ell^2$-norm by default), the metric projection $P_{Z_\Lambda}:Z\to Z_\Lambda$ is given by 
\begin{equation}\label{eq:metricProjPK}
	P_{Z_\Lambda}(z)=\operatorname*{arg\,min}_{\bar z\in Z_\Lambda}\|\bar z-z\|_2
,\end{equation}
which is 1-Lipschitz continuous. 
\begin{proof}[Proof of the assertion in Example~\ref{eg:ellipEg1}]
	We first show that $d_{K^2,K/2}^\text{rDeco}(\Lambda)=0$ holds. 
	Let $Z=\R^{K^2}$, $Z_\Lambda=[-1/K,1/K]^{K^2}$. 
	The mapping
	\[D_1:Z_\Lambda\to(\R^{K\times K},\|\cdot\|_F),\quad z\mapsto\lambda\]
	given as
	$\lambda_{ij}=\frac32+\frac K2z_{Ki+j-K}$
	is $\frac K2$-Lipschitz continuous, and 
	$D_1(Z_\Lambda)=\Lambda$ holds. 
	Take $P_{Z_\Lambda}:Z\to Z_\Lambda$ to be the metric projection given by~\ref{eq:metricProjPK}. 
	Then $D=D_1\circ P_{Z_\Lambda}$ is also $\frac K2$-Lipschitz continuous, and $D(Z_B)=\Lambda$ since $Z_\Lambda\subset Z_B$. 
	The assertion $d_{K^2,K/2}^\text{rDeco}(\Lambda)=0$ then follows from the definition of the restricted decoder width. 

	Now we consider the variable coefficients of the PDEs. 
	It is direct to check that $\pmb{a}^\lambda\in\Sigma_{1,2}^2(\Omega)$ holds for any $\lambda\in\Lambda$. 
	Given $\lambda,\bar\lambda\in\Lambda$, we have
	\[\begin{split}
		\|\pmb{a}^\lambda-\pmb{a}^{\bar\lambda}\|_{L^\infty(\Omega;\ell^2(\R^{2\times 2}))}
		&=\operatorname*{ess\,sup}_{\pmb{x}\in\Omega}\left\lVert\biggl(\sum_{i,j=1}^{K}(\lambda_{ij}-\bar\lambda_{ij})\chi_{D_{ij}}(\pmb{x})\biggr)I_2\right\rVert_2
		\\&=\operatorname*{ess\,sup}_{\pmb{x}\in\Omega}\left\lvert\sum_{i,j=1}^{K}(\lambda_{ij}-\bar\lambda_{ij})\chi_{D_{ij}}(\pmb{x})\right\rvert
		\\&=\max_{i,j}|\lambda_{ij}-\bar\lambda_{ij}|\quad\text{(as }D_{ij}\text{'s are disjoint)}
		\\&\le\|\lambda-\bar\lambda\|_F
	,\end{split}\]
	and the mapping $S_\Lambda:\Lambda\to\mathcal{A}_0, \lambda\mapsto(\pmb{a}^\lambda,c,f)$ is therefore $R$-Lipschitz if we endow $\Lambda$ with the matrix Frobenius norm $\|\cdot\|_F$. 
	By Proposition~\ref{prop:decWidthLip}, the set of possible parameters $\mathcal{A}=S_\Lambda(\Lambda)$ satisfies $d_{K^2,KR/2}^\text{rDeco}(\mathcal{A})=0$. 
	We then apply Theorem~\ref{thm:ellipDecoKvsA}, and insert $r=1,R=2$ to conclude the proof. 
\end{proof}

\subsection{Parabolic Equations on a Fixed Domain}
\begin{lem}\label{lem:weakBilForm}
	For all $u\in H_0^1(\Omega)$, $t\in[0,T]$,
	\[
		\frac r2\|u\|_{H_0^1(\Omega)}^2\le A[u,u;t]+C_B\|u\|_{L^2}^2
	\]
	holds, where
	$C_B=R+\frac{R^2}{2r}  .$
\end{lem}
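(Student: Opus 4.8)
The plan is to establish Lemma~\ref{lem:weakBilForm} as a standard G\r{a}rding-type inequality for the time-dependent bilinear form. First I would write out $A[u,u;t]$ from its definition and split it into three pieces: the principal (second-order) term $\int_\Omega(\nabla u)^{\mathrm T}\pmb{a}(t,\pmb{x})\nabla u\,\mathrm d\pmb{x}$, the first-order term $\int_\Omega u\,\pmb{b}(t,\pmb{x})^{\mathrm T}\nabla u\,\mathrm d\pmb{x}$, and the zeroth-order term $\int_\Omega c(t,\pmb{x})u^2\,\mathrm d\pmb{x}$.

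For the principal term, the uniform ellipticity $\pmb{a}\in\Sigma^d_{r,R}(\Omega_T)$ gives the pointwise bound $(\nabla u)^{\mathrm T}\pmb{a}\nabla u\ge r\|\nabla u\|_2^2$, so this term is at least $r\|u\|_{H_0^1}^2$ (recalling that here $\|\cdot\|_{H_0^1}$ denotes the gradient seminorm, as used in the proof of Lemma~\ref{lem:ellpH01u}). For the first-order term, I would apply the pointwise Cauchy--Schwarz bound $|u\,\pmb{b}^{\mathrm T}\nabla u|\le\|\pmb{b}(t,\pmb{x})\|_2\,|u|\,\|\nabla u\|_2\le R\,|u|\,\|\nabla u\|_2$ using $\pmb{b}\in\bar B_R^d(\Omega_T)$, followed by Young's inequality $R\,|u|\,\|\nabla u\|_2\le\frac r2\|\nabla u\|_2^2+\frac{R^2}{2r}u^2$, chosen so that exactly half of the coercive term is available to absorb it. For the zeroth-order term, $\|c\|_{L^\infty(\Omega_T)}\le R$ immediately gives $\bigl|\int_\Omega cu^2\,\mathrm d\pmb{x}\bigr|\le R\|u\|_{L^2}^2$.

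Combining the three estimates yields
\[
A[u,u;t]\ \ge\ r\|u\|_{H_0^1}^2-\Bigl(\tfrac r2\|u\|_{H_0^1}^2+\tfrac{R^2}{2r}\|u\|_{L^2}^2\Bigr)-R\|u\|_{L^2}^2
=\tfrac r2\|u\|_{H_0^1}^2-\Bigl(R+\tfrac{R^2}{2r}\Bigr)\|u\|_{L^2}^2,
\]
which is exactly the asserted inequality with $C_B=R+\frac{R^2}{2r}$ after rearranging. I do not expect any genuine obstacle here: the argument is a textbook coercivity estimate, and the only points requiring care are tuning the weight in Young's inequality so that precisely $\frac r2\|\nabla u\|_{L^2}^2$ survives, and being consistent about the convention $\|u\|_{H_0^1}^2=\|\nabla u\|_{L^2}^2$ in force throughout the paper.
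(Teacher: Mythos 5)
Your proposal is correct and follows essentially the same route as the paper's proof: split $A[u,u;t]$ into principal, first-order, and zeroth-order pieces, use uniform ellipticity to lower-bound the principal term by $r\|u\|_{H_0^1}^2$, absorb the first-order term via Cauchy--Schwarz and Young's inequality with weight chosen so that exactly $\tfrac r2\|\nabla u\|_{L^2}^2$ survives, and bound the zeroth-order term by $R\|u\|_{L^2}^2$. The only cosmetic difference is that the paper organizes the argument starting from $r\|u\|_{H_0^1}^2$ and moving terms across, whereas you start from $A[u,u;t]$ and bound below, but the estimates and constants are identical.
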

\begin{proof}
	Omitting the time $t$ for simplicity, we have
	\[\begin{split}
		r\|u\|_{H_0^1(\Omega)}^2 &=\int_\Omega r\|\nabla u\|_2^2\,\mathrm{d}x
		\\&\le\int_\Omega(\nabla u)^\mathrm{T}\pmb{a}(\nabla u)\,\mathrm{d}\pmb{x}
		\\&=A[u,u]-\int_\Omega(u\pmb{b}^\mathrm{T}(\nabla u)+cu^2)\,\mathrm{d}\pmb{x}
	.\end{split}\]
	The second term in the last equation above can be estimated as
	\[\begin{split}
		-\int_\Omega u\pmb{b}^\mathrm{T}(\nabla u)\,\mathrm{d}\pmb{x}
		&\le\int_\Omega\|b\|_2\cdot\|\nabla u\|_2\cdot|u|\,\mathrm{d}\pmb{x}
		\\&\le R\int_\Omega\|\nabla u\|_2\cdot|u|\,\mathrm{d}\pmb{x}
		\\&\le\frac r2\int_\Omega \|\nabla u\|_2^2\,\mathrm{d}\pmb{x}
		+\frac{R^2}{2r}\int_\Omega u^2\,\mathrm{d}\pmb{x}
		\\&=\frac r2\|u\|_{H_0^1(\Omega)}^2
		+\frac{R^2}{2r}\|u\|_{L^2}^2
	,\end{split}\]
	and the third term as
	\[\begin{split}
		-\int_\Omega cu^2\,\mathrm{d}\pmb{x}
		&\le\|c\|_{L^\infty}\int_\Omega u^2\,\mathrm{d}\pmb{x}
		\\&\le R\|u\|_{L^2}^2
	.\end{split}\]
	Combining the inequalities above, the lemma is proved. 
\end{proof}

\begin{lem}\label{lem:parabSolNorm}
	The weak solution to~\eqref{eq:parabEqOrig} satisfies
	\[
		\|\mathbf{u}\|_{L^2([0,T];H_0^1(\Omega))}
		\le C_1(\|\mathbf{f}\|_{L^2([0,T];H^{-1}(\Omega))}
		+\|g\|_{L^2})
	,\]
	where $C_1>0$ is a constant depending only on $r,R$ and $T$. 
\end{lem}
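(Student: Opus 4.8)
The plan is to establish the standard parabolic energy estimate: test the weak formulation~\eqref{eq:parabEqWeak} against $v=\mathbf{u}(t)$ and close the estimate with Gr\"onwall's inequality. As a preliminary I would invoke the regularity statement cited in the footnote to~\eqref{eq:parabEqWeak} (Theorem~3 in Section~5.9.2 of~\cite{Evans2010PartialDE}): under the stated hypotheses on $\mathbf{u}$ one has $\mathbf{u}\in C([0,T];L^2(\Omega))$, the function $t\mapsto\|\mathbf{u}(t)\|_{L^2}^2$ is absolutely continuous, and $\frac{\mathrm{d}}{\mathrm{d}t}\|\mathbf{u}(t)\|_{L^2}^2=2\langle\mathbf{u}'(t),\mathbf{u}(t)\rangle$ for a.e.\ $t\in[0,T]$. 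This is the only non-elementary ingredient, and it is exactly the tool already referenced in the statement of the weak formulation, so no new work is required here.

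Substituting $v=\mathbf{u}(t)$ into~\eqref{eq:parabEqWeak} then gives, for a.e.\ $t$,
\[
	\frac12\frac{\mathrm{d}}{\mathrm{d}t}\|\mathbf{u}(t)\|_{L^2}^2+A[\mathbf{u}(t),\mathbf{u}(t);t]=\langle\mathbf{f}(t),\mathbf{u}(t)\rangle .
\]
I would bound the right-hand side using the duality pairing and Young's inequality, $\langle\mathbf{f}(t),\mathbf{u}(t)\rangle\le\|\mathbf{f}(t)\|_{H^{-1}}\|\mathbf{u}(t)\|_{H_0^1}\le\frac r4\|\mathbf{u}(t)\|_{H_0^1}^2+\frac1r\|\mathbf{f}(t)\|_{H^{-1}}^2$, and bound the bilinear form from below by Lemma~\ref{lem:weakBilForm}, $A[\mathbf{u}(t),\mathbf{u}(t);t]\ge\frac r2\|\mathbf{u}(t)\|_{H_0^1}^2-C_B\|\mathbf{u}(t)\|_{L^2}^2$ with $C_B=R+\frac{R^2}{2r}$. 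Rearranging yields the differential inequality
\[
	\frac{\mathrm{d}}{\mathrm{d}t}\|\mathbf{u}(t)\|_{L^2}^2+\frac r2\|\mathbf{u}(t)\|_{H_0^1}^2\le 2C_B\|\mathbf{u}(t)\|_{L^2}^2+\frac2r\|\mathbf{f}(t)\|_{H^{-1}}^2 .
\]

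From here I would make two extractions. Dropping the nonnegative term $\frac r2\|\mathbf{u}(t)\|_{H_0^1}^2$ and applying Gr\"onwall's inequality with the initial value $\mathbf{u}(0)=g$ gives $\sup_{t\in[0,T]}\|\mathbf{u}(t)\|_{L^2}^2\le e^{2C_BT}\bigl(\|g\|_{L^2}^2+\frac2r\|\mathbf{f}\|_{L^2([0,T];H^{-1}(\Omega))}^2\bigr)$. Then, integrating the displayed differential inequality over $[0,T]$, discarding $\|\mathbf{u}(T)\|_{L^2}^2\ge0$, and inserting this $L^\infty_tL^2_x$ bound into the term $2C_B\int_0^T\|\mathbf{u}(t)\|_{L^2}^2\,\mathrm{d}t$ produces an estimate of the form $\frac r2\|\mathbf{u}\|_{L^2([0,T];H_0^1(\Omega))}^2\le C'\bigl(\|g\|_{L^2}^2+\|\mathbf{f}\|_{L^2([0,T];H^{-1}(\Omega))}^2\bigr)$ with $C'$ depending only on $r$, $C_B$, $T$, hence only on $r,R,T$. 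Taking square roots and using $\sqrt{a^2+b^2}\le a+b$ gives the claimed bound with a suitable $C_1=C_1(r,R,T)$. I do not expect any genuine obstacle beyond bookkeeping of constants; the one place to be slightly careful is that each manipulation above is legitimate precisely because of the absolute continuity of $t\mapsto\|\mathbf{u}(t)\|_{L^2}^2$ recalled at the outset.
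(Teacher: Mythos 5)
Your proposal is correct and follows essentially the same route as the paper's own proof: testing the weak formulation against $v=\mathbf{u}(t)$, invoking the absolute continuity of $t\mapsto\|\mathbf{u}(t)\|_{L^2}^2$, applying Lemma~\ref{lem:weakBilForm} and Young's inequality to obtain the differential inequality $\frac{\mathrm{d}}{\mathrm{d}t}\|\mathbf{u}(t)\|_{L^2}^2+\frac r2\|\mathbf{u}(t)\|_{H_0^1}^2\le 2C_B\|\mathbf{u}(t)\|_{L^2}^2+\frac2r\|\mathbf{f}(t)\|_{H^{-1}}^2$, then closing with Gr\"onwall to bound $\sup_t\|\mathbf{u}(t)\|_{L^2}^2$ and finally integrating over $[0,T]$ to recover the $L^2_tH^1_0$ norm. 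The two-step extraction you describe is exactly the paper's argument, with only a cosmetic difference in where the factor of $2$ is carried through the Young inequality.
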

\begin{proof}
	Taking $v=\mathbf{u}(t)$ in~\eqref{eq:parabEqWeak}, we have
	\[
		\langle \mathbf{u}'(t),\mathbf{u}(t)\rangle+A[\mathbf{u}(t),\mathbf{u}(t);t]=\langle \mathbf{f}(t),\mathbf{u}(t)\rangle
	.\]
	This gives
	\begin{equation}\label{eq:parab_dtuL2}\begin{split}
		\frac{\mathrm{d}}{\mathrm{d}t}\bigl(\|\mathbf{u}(t)\|_{L^2}^2\bigr) &=2\langle\mathbf{u}'(t),\mathbf{u}(t)\rangle
		\\&=2\langle \mathbf{f}(t),\mathbf{u}(t)\rangle-2A[\mathbf{u}(t),\mathbf{u}(t);t]
		\\&\le 2\|\mathbf{f}(t)\|_{H^{-1}(\Omega)}\|\mathbf{u}(t)\|_{H_0^1(\Omega)}
		-2A[\mathbf{u}(t),\mathbf{u}(t);t]
		\\&\le\frac r2\|\mathbf{u}(t)\|_{H_0^1(\Omega)}^2
		+\frac2r\|\mathbf{f}(t)\|_{H^{-1}(\Omega)}^2
		-2A[\mathbf{u}(t),\mathbf{u}(t);t]
		\\&\le-\frac r2\|\mathbf{u}(t)\|_{H_0^1(\Omega)}^2
		+\frac2r\|\mathbf{f}(t)\|_{H^{-1}(\Omega)}^2
		+2C_B\|\mathbf{u}(t)\|_{L^2}^2
	.\end{split}\end{equation}
	The rest part of the proof has much in common with that of~\cite[Theorem~2, Section~7.1.2]{Evans2010PartialDE}, and we include the details for conveniece to the readers. 
	We denote $\alpha(t)=\|\mathbf{u}(t)\|_{L^2}^2$, $\beta(t)=\frac2r\|\mathbf{f}(t)\|_{H^{-1}(\Omega)}^2$. 
	Then~\eqref{eq:parab_dtuL2} implies $\alpha'(t)\le 2C_B\alpha(t)+\beta(t)$, and Gronwall's inequality yields the estimate
	\[\alpha(t)\le\exp(2C_Bt)\left(\alpha(0)+\int_0^t\beta(s)\,\mathrm{d}s\right) .\]
	As $\alpha(0)=\|\mathbf{u}(0)\|_{L^2}^2=\|g\|_{L^2}^2$, we have
	\[\alpha(t)\le\exp(2C_BT)\left(\|g\|_{L^2}^2+\frac2r\|\mathbf{f}\|_{L^2([0,T];H^{-1}(\Omega))}^2\right) \]
	for any $t\in[0,T]$. 
	We integrate~\eqref{eq:parab_dtuL2} from $0$ to $T$, and employ the inequality derived above to get
	\[
		\|\mathbf{u}\|_{L^2([0,T];H_0^1(\Omega))}^2
		\le C_1^2(\|\mathbf{f}\|_{L^2([0,T];H^{-1}(\Omega))}^2
		+\|g\|_{L^2}^2)
	,\]
	which directly implies the assertion of the lemma. 
\end{proof}
\begin{proof}[Proof of Proposition~\ref{prop:parabSLip}]
	We denote the bilinear forms corresponding to the two sets of coefficient fields as $A[\cdot,\cdot;t]$ and $\bar A[\cdot,\cdot;t]$, respectively. 
	Now for any $v\in H_0^1(\Omega)$, we let $\mathbf{w}=\mathbf{u}-\bar{\mathbf{u}}$, and substract the two equations
	\[\begin{split}
		\langle \mathbf{u}'(t),v\rangle+A[\mathbf{u}(t),v;t]&=\langle \mathbf{f}(t),v\rangle,
		\\\langle \bar {\mathbf{u}}'(t),v\rangle+\bar A[\bar {\mathbf{u}}(t),v;t]&=\langle \bar {\mathbf{f}}(t),v\rangle
	\end{split}\]
	to get
	\begin{equation}\label{eq:parab_w_eqn}
		\langle \mathbf{w}'(t),v\rangle+A[\mathbf{w}(t),v;t]=\langle \mathbf{f}_1(t),v\rangle
	,\end{equation}
	where $\mathbf{f}_1:[0,T]\to H^{-1}(\Omega)$ is given by
	\begin{equation}\label{eq:parab_f1def}
		\langle \mathbf{f}_1(t),v\rangle=\bar A[\bar {\mathbf{u}}(t),v;t]-A[\bar {\mathbf{u}}(t),v;t]
		+\langle(\mathbf{f}-\bar{\mathbf{f}})(t),v\rangle
	\end{equation}
	for any $v\in H_0^1(\Omega)$. 
	Now
	\[\begin{split}
		&\quad\bar A[u,v;t]-A[u,v;t]
		\\&=\int_\Omega((\nabla v)^\mathrm{T}(\pmb{a}-\pmb{\bar a})(\nabla u)
		\\&\quad+v(\pmb{\bar b}-\pmb{b})^\mathrm{T}(\nabla u)+(\bar c-c)uv)\,\mathrm{d}\pmb{x}
		\\&\le\int_\Omega\|\pmb{\bar a}-\pmb{a}\|_2\|\nabla u\|_2\|\nabla v\|_2\,\mathrm{d}\pmb{x}
		\\&\quad+\int_\Omega\bigl(\|\pmb{\bar b}-\pmb{b}\|_2\|\nabla u\|_2|v|+|\bar c-c|\cdot|u|\cdot|v|\bigr)\,\mathrm{d}\pmb{x}
		\\&\le\|\pmb{\bar a}-\pmb{a}\|_{L^\infty(\Omega_T,\ell^2(\R^{d\times d}))}\|u\|_{H_0^1(\Omega)}\|v\|_{H_0^1(\Omega)}
		\\&\quad+\|\pmb{\bar b}-\pmb{b}\|_{L^\infty(\Omega_T,\ell^2(\R^d))}\|u\|_{H_0^1(\Omega)}\|v\|_{L^2}
		\\&\quad+\|\bar c-c\|_{L^\infty(\Omega_T)}\|u\|_{L^2}\|v\|_{L^2}
		\\&\le \delta_{abc}\|u\|_{H_0^1(\Omega)}\|v\|_{H_0^1(\Omega)}
	,\end{split}\]
	where
	\begin{equation}\label{eq:parab_dabc_def}\begin{split}
		\delta_{abc}&=\|\pmb{\bar a}-\pmb{a}\|_{L^\infty(\Omega_T,\ell^2(\R^{d\times d}))}+C_\text{P}\|\pmb{\bar b}-\pmb{b}\|_{L^\infty(\Omega_T,\ell^2(\R^d))}
		\\&\quad+C_\text{P}^2\|\bar c-c\|_{L^\infty(\Omega_T)}
	,\end{split}\end{equation}
	and $C_\text{P}>0$ is the Poincar\'e constant of the domain $\Omega$. 
	By~\eqref{eq:parab_f1def}, we have
	\[\begin{split}
		\|\mathbf{f}_1(t)\|_{H^{-1}(\Omega)} &=\sup_{\|v\|_{H_0^1(\Omega)}=1}\langle\mathbf{f}_1(t),v\rangle
		\\&\le \delta_{abc}\|\bar{\mathbf{u}}(t)\|_{H_0^1(\Omega)}+\|\mathbf{f}(t)-\bar{\mathbf{f}}(t)\|_{H^{-1}(\Omega)}
	,\end{split}\]
	which holds for all $t\in[0,T]$. 
	These are non-negative functions with respect to $t$, and taking $L^2([0,T])$-norm gives
	\begin{equation}\label{eq:parab_f1est}
		\|\mathbf{f}_1\|_{L^2([0,T];H^{-1}(\Omega))}
		\le \delta_{abc}\|\bar{\mathbf{u}}\|_{L^2([0,T];H_0^1(\Omega))}+\|\mathbf{f}-\bar{\mathbf{f}}\|_{L^2([0,T];H^{-1}(\Omega))}
	.\end{equation}
	By Lemma~\ref{lem:parabSolNorm} and the condition
		$\|\bar{\mathbf{f}}\|_{L^2([0,T];H^{-1}(\Omega))}\le R$,
		$\|\bar g\|_{L^2}\le R$,
		we know
		\begin{equation}\label{eq:parab_wu_est}
		\|\bar{\mathbf{u}}\|_{L^2([0,T];H_0^1(\Omega))}\le 2C_1R
		.\end{equation}
	Applying Lemma~\ref{lem:parabSolNorm} to~\eqref{eq:parab_w_eqn} with initial condition $\mathbf{w}(0)=\mathbf{u}(0)-\bar{\mathbf{u}}(0)=g-\bar g$ would give
	\[\begin{split}
		&\quad\|\mathbf{u}-\bar{\mathbf{u}}\|_{L^2([0,T];H_0^1(\Omega))}
		\\&=\|\mathbf{w}\|_{L^2([0,T];H_0^1(\Omega))}
		\\&\le C_1(\|\mathbf{f}_1\|_{L^2([0,T];H^{-1}(\Omega))} +\|g-\bar g\|_{L^2})
		\\&\le C_1\delta_{abc}\|\bar{\mathbf{u}}\|_{L^2([0,T];H_0^1(\Omega))}+C_1\|\mathbf{f}-\bar{\mathbf{f}}\|_{L^2([0,T];H^{-1}(\Omega))}
		\\&\quad+C_1\|g-\bar g\|_{L^2}\hspace{6em}\text{(by~\eqref{eq:parab_f1est})}
		\\&\le 2RC_1^2\|\pmb{a}-\pmb{\bar a}\|_{L^\infty(\Omega_T,\ell^2(\R^{d\times d}))}
		+2RC_1^2C_\text{P}\|\pmb{b}-\pmb{\bar b}\|_{L^\infty(\Omega_T,\ell^2(\R^d))}
		\\&\quad+2RC_1^2C_\text{P}^2\|c-\bar c\|_{L^\infty(\Omega_T)}
		+C_1\|\mathbf{f}-\bar{\mathbf{f}}\|_{L^2([0,T];H^{-1}(\Omega))}
		\\&\quad+C_1\|g-\bar g\|_{L^2}\hspace{5em}\text{(by~\eqref{eq:parab_wu_est} and~\eqref{eq:parab_dabc_def})}
	.\end{split}\]
	The proof is completed by taking $C=\max(2RC_1^2,2RC_1^2C_\text{P}^2,C_1)$. 
\end{proof}
\begin{proof}[Proof of the assertion in Example~\ref{eg:parabEg1}]
	(1) 
	Given $\lambda\in\Lambda_K$, we denote $S_\Lambda(\lambda)=(\pmb{a},\pmb{b},c,\mathbf{f},g)$. 
	Since $\pmb{\phi}_k^b$, $k=1,\dots,K$ have disjoint essential supports, $|\lambda_k^b|\le\sqrt{\lambda_k^2}\le 1$, and $\pmb{\phi}_k^b\in B_R^d(\Omega_T)$, we have
	\begin{equation}\label{eq:parabEg1BnormEst1}\begin{split}
		\|\pmb{b}(t,\pmb{x})\|_2
		&=\left\lVert \sum_{k=1}^K\lambda_k^b\pmb{\phi}_k^b(t,\pmb{x})\right\rVert_2
		=\max_{1\le k\le K}\left\lVert\lambda_k^b\pmb{\phi}_k^b(t,\pmb{x})\right\rVert_2
		\\&\le\max_{1\le k\le K}\left\lVert\pmb{\phi}_k^b(t,\pmb{x})\right\rVert_2
		\le R
	\end{split}\end{equation}
	for a.e. $(t,x)\in\Omega_T$,
	and therefore
	\begin{equation}\label{eq:parabEg1BnormEst2}
		\|\pmb{b}\|_{L^\infty(\Omega_T;\ell^2(\R^d))}=
		\operatorname*{ess\,sup}_{t\in[0,T],\pmb{x}\in\Omega}\|\pmb{b}(t,\pmb{x})\|_2\le R
	.\end{equation}
	Similar arguments show $\|c\|_{L^\infty(\Omega_T)}\le R$, and 
	\[|\pmb{a}^{ii}(t,\pmb{x})-\tfrac{R+r}2|
		\le\max_{1\le k\le K}|\pmb{\phi}_k^a(t,\pmb{x})_i|
		\le\max_{1\le k\le K}\|\pmb{\phi}_k^a(t,\pmb{x})\|_2
	\le\tfrac{R-r}2 \]
	for $i=1,\dots,d$, a.e. $(t,\pmb{x})\in\Omega_T$. 
	Note that the latter conclusion would imply $\pmb{a}\in\Sigma_{r,R}^d(\Omega_T)$ since its off-diagonal entries are zero. 
	Furthermore, by Cauchy's inequality we have
	\begin{equation}\label{eq:parabEg1FnormEst}\begin{split}
		\|\mathbf{f}\|_{L^2([0,T];H^{-1}(\Omega))}^2
		&=\left\lVert \sum_{k=1}^K\lambda_k^\mathbf{f}\phi_k^\mathbf{f}(t)\right\rVert_{L^2([0,T];H^{-1}(\Omega))}^2
		\\&\le\biggl(\sum_{k=1}^K|\lambda_k^\mathbf{f}|\cdot\|\phi_k^\mathbf{f}(t)\|_{L^2([0,T];H^{-1}(\Omega))}\biggr)^2
		\\&\le\biggl(\sum_{k=1}^K(\lambda_k^\mathbf{f})^2\biggr)\biggl(\sum_{k=1}^K\left\lVert \phi_k^\mathbf{f}(t)\right\rVert_{L^2([0,T];H^{-1}(\Omega))}^2\biggr)
		\\&\le\biggl(\sum_{k=1}^K\lambda_k^2\biggr)R^2
		\\&\le R^2
	,\end{split}\end{equation}
	and similarly $\|g\|_{L^2(\Omega)}\le R$. 
	We conclude that $(\pmb{a},\pmb{b},c,\mathbf{f},g)\in\mathcal{A}_0$. 

	(2) 
	Let $\lambda,\bar\lambda\in\Lambda_K$, $S_\Lambda(\lambda)=(\pmb{a},\pmb{b},c,\mathbf{f},g)$ and $S_\Lambda(\bar\lambda)=(\pmb{\bar a},\pmb{\bar b},\bar c,\bar{\mathbf{f}},\bar g)$. 
	Similar to~\eqref{eq:parabEg1BnormEst1} and~\eqref{eq:parabEg1BnormEst2}, we have
	\begin{equation}\label{eq:parabDiff_b}\begin{split}
		\|\pmb{b}-\pmb{\bar b}\|_{L^\infty(\Omega_T;\ell^2(\R^d))}
		&=\max_{1\le k\le K}\left\lVert(\lambda_k^b-\bar\lambda_k^b)\pmb{\phi}_k^b(t,\pmb{x})\right\rVert_{L^\infty(\Omega_T;\ell^2(\R^d))}
		\\&\le\max_{1\le k\le K}\|\lambda-\bar\lambda\|\cdot\left\lVert\pmb{\phi}_k^b(t,\pmb{x})\right\rVert_{L^\infty(\Omega_T;\ell^2(\R^d))}
		\\&\le R\|\lambda-\bar\lambda\|
	,\end{split}\end{equation}
	and $\|c-\bar c\|_{L^2(\Omega_T)}\le R\|\lambda-\bar\lambda\|$. 
	The relation
	\[|\pmb{a}^{ii}(t,\pmb{x})-\pmb{\bar a}^{ii}(t,\pmb{x})|
		\le\tfrac{R-r}2\|\lambda-\bar\lambda\|
		\le R\|\lambda-\bar\lambda\|
	\]
	holds for $i=1,\dots,d$, a.e. $(t,\pmb{x})\in\Omega_T$, which implies $\|\pmb{a}-\pmb{\bar a}\|_{L^\infty(\Omega_T,\ell^2(\R^{d\times d}))}\le R\|\lambda-\bar\lambda\|$ since $\pmb{a}$ and $\pmb{\bar a}$ are diagonal matrices. 
	Furthermore, arguments analogous to~\eqref{eq:parabEg1FnormEst} would show
	$\|\mathbf{f}-\bar{\mathbf{f}}\|_{L^2([0,T];H^{-1}(\Omega))}^2\le R\|\lambda-\bar\lambda\|$ and $\|g-\bar g\|_{L^2(\Omega)}\le R\|\lambda-\bar\lambda\|$. 
	Combining the inequalities above, Proposition~\ref{prop:parabSLip} indicates
	\[
		\|\mathbf{u}-\bar{\mathbf{u}}\|_{L^2([0,T];H_0^1(\Omega))}
		\le 5CR\|\lambda-\bar\lambda\|
	,\]
	where $\mathbf{u}=S_0(S_\Lambda(\lambda))$ and $\bar{\mathbf{u}}=S_0(S_\Lambda(\bar\lambda))$. 
	We conclude that the mapping $S_0\circ S_\Lambda$ is Lipschitz continuous with Lipschitz constant $5CR$. 

	(3) According to the definition of the decoder width, it is easy to see that $d_{5K,1}^\text{rDeco}(\Lambda_K)=0$. 
	In fact, we may view $\Lambda_K$ as the closed unit ball of $Z=\R^{5K}$, and take $D=P_{\Lambda_K}:Z\to\Lambda_K$ to be the metric projection~\ref{eq:metricProjPK}. 
	Proposition~\ref{prop:decWidthLip} then implies $d_{5K,5CR}^\text{Deco}(S_0\circ S_\Lambda(\Lambda_K))=0$. 
\end{proof}
\begin{proof}[Proof of the assertion in Example~\ref{eg:parabEg2}]
	We have $d_{5n,1}^\text{rDeco}(\Lambda_w)\le\sqrt{w_n}$. 
	In fact, given $n>0$, we may take $D_1:Z=\R^{5n}\to\{\lambda\}$ to be an inclusion that fills the entries $\lambda_k^a,\dots,\lambda_k^g$ for $k=1,\dots,n$. 
	Denote $Z_w=D_1^{-1}(\Lambda_w)$ to be the preimage of $\Lambda_w$. 
	It is easy to check that $Z_w$ is a convex subset of $Z$. 
	$Z_w\subseteq Z_B$ holds, since for any $z\in Z_w$, $\lambda=D_1(z)\in\Lambda_w$ satisfies $\lambda_k^2=0$ for $k>n$, and we have
	\[
		\|z\|_2^2=\sum_{k=1}^{n}\lambda_k^2
		\le\sum_{k=1}^{n}\frac{\lambda_k^2}{w_k}
		=\sum_{k=1}^\infty\frac{\lambda_k^2}{w_k}
		\le 1
	.\]
	For any $\lambda\in\Lambda_w$, we can find $z\in Z_w=P_{Z_w}(Z_B)$ satisfying
	\[
		\|\lambda-D_1(z)\|^2=\sum_{k>n}\lambda_k^2
		\le w_n\sum_{k>n}\frac{\lambda_k^2}{w_k}
		\le w_n\sum_{k=1}^\infty\frac{\lambda_k^2}{w_k}
		\le w_n
	.\]
	Then $D=D_1\circ P_{Z_w}:Z\to\Lambda_w$ can be chosen as the specific decoder mapping, 
	and $d_{5n,1}^\text{rDeco}(\Lambda_w)\le\sqrt{w_n}$ follows from the definition of the restricted decoder width. 

	Similar to Example~\ref{eg:parabEg1}, 
	it can be shown that the mapping $S_0\circ S_\Lambda:\Lambda_w\to L^2([0,T];H_0^1(\Omega))$ is $5CR$-Lipschitz continuous. 
	Proposition~\ref{prop:decWidthLip} then implies
	\[d_{5n,5CR}^\text{Deco}(S_0\circ S_\Lambda(\Lambda_w))\le 5CR\sqrt{w_n} .\qedhere\] 
\end{proof}

\subsection{Elliptic Equations on a Variable Domain}\label{sec:ellipVDproof}
\subsubsection{Preparation: Solutions on Two Domains}\label{sec:ellip2domain}
We shall first focus on the simpler case where only two possible domains $\Omega$ and $\Omega_2$ are considered, and estimate the distance between the corresponding solutions $u,u_2$. 
This analysis forms the basis of the subsequent proofs in Section~\ref{sec:ellipVDproof}, where we will investigate the more general case involving a whole class of domains.

Assume $\Omega\subseteq\Omega^\text{m}$ is a fixed subdomain, $\beta$ is a $C^1$-diffeomorphism from $\bar\Omega^\text{m}$ onto itself. 
Denote $\Omega_2=\beta(\Omega)$, 
\[
	\delta_\beta=\|\beta-\mathrm{id}\|_{C^1(\bar\Omega^\text{m};\R^d)}
	=\max_{\pmb{x}\in\bar\Omega^\text{m}}\max\left(\|\beta(\pmb{x})-\pmb{x}\|_2,\left\|\frac{\partial \beta(\pmb{x})}{\partial \pmb{x}}-I_d\right\|_2\right)
,\]
where $\mathrm{id}$ is the identity mapping from $\Omega^\text{m}$ to itself. 
The weak solution to~\eqref{eq:ellipVarDom} is denoted as $u\in H^1_0(\Omega^\text{m}\setminus\partial\Omega)$, 
and the solution with the inner region $\Omega$ replaced by $\Omega_2$ is denoted as $u_2\in H^1_0(\Omega^\text{m}\setminus\partial\Omega_2)$. 
We state the main theorem for this section (Section~\ref{sec:ellip2domain}) as follows: 
\begin{thm}\label{thm:vardomEllip}
	Under Assumption~\ref{ass:vardomEllip}, we have
	$
		\|u-u_2\|_{L^2(\Omega^\text{m})}\le C\delta_\beta/2
	$
	provided $\delta_\beta\le\delta_0\triangleq\min(1-(8/9)^{2/d},1/9)$, 
	where the constant $C$ is the same as in Proposition~\ref{prop:eVDtransLip}
\end{thm}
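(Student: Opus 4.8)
The plan is to transport $u_2$ back onto the domain of $u$ by precomposing with $\beta$, reduce the comparison to a fixed-domain energy estimate of the type already used for Proposition~\ref{prop:ellpLip}, and then separately pay for the error introduced by the transport. Write $J(\pmb x)=\partial\beta(\pmb x)/\partial\pmb x$ and set $\tilde u_2:=u_2\circ\beta$. Since $\beta$ is an automorphism of $\bar\Omega^\text{m}$ with $\beta(\partial\Omega)=\partial\Omega_2$, we have $\tilde u_2\in H^1_0(\Omega^\text{m}\setminus\partial\Omega)$, and---a point I will use throughout---any member of $H^1_0(\Omega^\text{m}\setminus\partial\Omega)$ extends across $\partial\Omega$ to a member of $H^1_0(\Omega^\text{m})$, because it has zero trace on $\partial\Omega$ from both sides. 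Changing variables $\pmb y=\beta(\pmb x)$ in the weak formulation of $u_2$ shows that $\tilde u_2$ is the weak solution of~\eqref{eq:ellipVarDom} on $\Omega^\text{m}\setminus\partial\Omega$ with the perturbed data
\[
	\tilde{\pmb a}(\pmb x)=|\det J(\pmb x)|\,J(\pmb x)^{-1}\,\pmb a(\beta(\pmb x))\,J(\pmb x)^{-\mathrm T},
	\qquad
	\tilde f(\pmb x)=|\det J(\pmb x)|\,f(\beta(\pmb x)).
\]

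Next I would record the smallness of this perturbation. Because $\delta_\beta\le\delta_0\le 1/9$, the singular values of $J$ lie in $[1-\delta_\beta,1+\delta_\beta]\subseteq[8/9,10/9]$, so $\|J^{-1}\|_2$, $|\det J-1|$ and $\big\||\det J|\,J^{-1}J^{-\mathrm T}-I_d\big\|_2$ are all bounded by explicit multiples of $\delta_\beta$, and $\tilde{\pmb a}$ remains uniformly elliptic with constants controlled by $r,R,\delta_0$; the precise form $\delta_0=\min(1-(8/9)^{2/d},1/9)$ is exactly what keeps these constants clean. Combined with the $R$-Lipschitz continuity of $\pmb a$ and the bound $\pmb a\preceq RI_d$ from Assumption~\ref{ass:vardomEllip}, together with $\|\beta-\mathrm{id}\|_{C^0}\le\delta_\beta$, this gives $\|\tilde{\pmb a}-\pmb a\|_{L^\infty(\Omega^\text{m};\ell^2(\R^{d\times d}))}\le c_1R\delta_\beta$. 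I would also bound $\|\tilde u_2\|_{H^1_0(\Omega^\text{m})}$ by a constant depending only on $r,R$ and $\Omega^\text{m}$, either from the $c\equiv 0$ analogue of Lemma~\ref{lem:ellpH01u} applied to $\tilde u_2$, or from $\|u_2\|_{H^1_0}\le C_\text{P}R/r$ and the bounded distortion of $\beta$.

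Then I would run the energy comparison: subtracting the weak formulations of $u$ and $\tilde u_2$ and testing with $w=u-\tilde u_2\in H^1_0(\Omega^\text{m})$ gives
\[
	\int_{\Omega^\text{m}}(\nabla w)^\mathrm T\pmb a\,\nabla w\,\mathrm d\pmb x
	=\int_{\Omega^\text{m}}(\nabla\tilde u_2)^\mathrm T(\tilde{\pmb a}-\pmb a)\nabla w\,\mathrm d\pmb x
	+\int_{\Omega^\text{m}}(f-\tilde f)\,w\,\mathrm d\pmb x.
\]
The first term on the right is $\le\|\tilde{\pmb a}-\pmb a\|_{L^\infty}\|\tilde u_2\|_{H^1_0}\|w\|_{H^1_0}=O(\delta_\beta\|w\|_{H^1_0})$. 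The second term is the delicate one, because $f$ is only in $L^2$ and there is no hope of estimating $\|f\circ\beta-f\|_{L^2}$ directly; instead I would change variables $\pmb y=\beta(\pmb x)$ back, rewriting it as $\int_{\Omega^\text{m}}f\,(w-w\circ\beta^{-1})\,\mathrm d\pmb y$, which moves the displacement onto the $H^1$-regular factor. Since $\Omega^\text{m}$ is convex and $w\in H^1_0(\Omega^\text{m})$, integrating $\nabla w$ along the segment from $\beta^{-1}(\pmb y)$ to $\pmb y$ (and using that the associated Jacobians are bounded below, since $\|\partial\beta^{-1}/\partial\pmb y-I_d\|_2\le\delta_\beta/(1-\delta_0)<1/2$) yields $\|w-w\circ\beta^{-1}\|_{L^2}\le c_2\delta_\beta\|\nabla w\|_{L^2}$, so the second term is $\le Rc_2\delta_\beta\|w\|_{H^1_0}$. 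Coercivity $\int(\nabla w)^\mathrm T\pmb a\nabla w\ge r\|w\|_{H^1_0}^2$ then forces $\|w\|_{H^1_0}\le c_3\delta_\beta$, hence $\|u-\tilde u_2\|_{L^2(\Omega^\text{m})}\le C_\text{P}c_3\delta_\beta$ by Poincar\'e.

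Finally, the triangle inequality $\|u-u_2\|_{L^2(\Omega^\text{m})}\le\|u-\tilde u_2\|_{L^2}+\|u_2\circ\beta-u_2\|_{L^2}$ together with the same segment estimate applied to $u_2\in H^1_0(\Omega^\text{m})$ (bounding the last term by $c_2\delta_\beta\|\nabla u_2\|_{L^2}\le c_2\delta_\beta C_\text{P}R/r$) yields $\|u-u_2\|_{L^2(\Omega^\text{m})}\le C\delta_\beta/2$ once the constants $c_1,c_2,c_3$ are chosen to reproduce the explicit $C$ of Proposition~\ref{prop:eVDtransLip}. The main obstacle, and the reason $\Omega^\text{m}$ is assumed convex, is precisely the low regularity of $f$: it rules out a naive estimate of the right-hand-side term and forces the change of variables that transfers the transport onto an $H^1$-function, after which only careful but routine bookkeeping of the dimensional constants under the hypothesis $\delta_\beta\le\delta_0$ remains.
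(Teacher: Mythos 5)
Your proposal is correct and, modulo a mirror-symmetric choice of direction, takes the same route as the paper: the paper pushes $u$ forward to $\Omega_2$ via $\tilde u=u\circ\beta^{-1}$ and compares with $u_2$ there, whereas you pull $u_2$ back to $\Omega$ via $\tilde u_2=u_2\circ\beta$ and compare with $u$; the triangle-inequality split (transport error plus fixed-domain energy comparison), the coefficient-perturbation bounds from the singular-value control of $J$, and in particular the duality change of variables that moves the displacement from $f$ onto the $H^1$ test function (the paper's Proposition~\ref{prop:df_tildef}) are all the same ideas. The only thing you leave unverified is that the bookkeeping reproduces the exact constant $C\delta_\beta/2$, which the paper does track explicitly through Corollary~\ref{cor:du_tildeu}, Propositions~\ref{prop:est_tildeuH01}, \ref{prop:estiJx}, \ref{prop:da_tildea}, and \ref{prop:df_tildef}.
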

We let $\tilde u(\pmb{x})=u(\beta^{-1}(\pmb{x}))$, and it is easy to verify that $\tilde u\in H^1_0(\Omega^\text{m}\setminus\partial\Omega_2)$ holds. 
To prove Theorem~\ref{thm:vardomEllip}, we shall estimate $\|u-\tilde u\|_{L^2(\Omega^\text{m})}$ and $\|\tilde u-u_2\|_{L^2(\Omega^\text{m})}$ seperately. 
For the former term, we construct a continuous transition between $u$ and $\tilde u$ in $L^2(\Omega^\text{m})$ by using an interpolation between $\beta$ and the identity mapping on $\bar\Omega^\text{m}$ (see Proposition~\ref{prop:dv_beta_L2} and Corollary~\ref{cor:du_tildeu}). 
For the latter term, we derive the PDE that $\tilde u$ satisfies (Lemma~\ref{lem:eqn4tildeu}), 
and then the upper bound can be given
as in the fixed domain case (Proposition~\ref{prop:ellpLip}). 

The detailed proof will be presented in the following part of this section. 
We assume Assumption~\ref{ass:vardomEllip} 
holds unless otherwise specified. 

\begin{lem}\label{lem:linalgDet}
	Let $Q\in\R^{d\times d}$ be a matrix. Then $\det(I+Q)\le(1+\alpha)^d$, where $\alpha=\|Q\|_2$. 
	If $\alpha<1$, we further have $\det(I+Q)\ge(1-\alpha)^d$. 
\end{lem}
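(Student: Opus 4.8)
The plan is to reduce the statement to elementary estimates on the eigenvalues (equivalently, singular values) of $Q$ and $I+Q$, using that $\|Q\|_2$ is the spectral norm, i.e. the operator norm induced by the Euclidean norm on $\R^d$. First I would record the standard fact that every (possibly complex) eigenvalue $\lambda$ of $Q$ satisfies $|\lambda|\le\|Q\|_2=\alpha$: if $Qv=\lambda v$ with $v\neq 0$, then $|\lambda|\,\|v\|_2=\|Qv\|_2\le\alpha\|v\|_2$. Writing $\lambda_1,\dots,\lambda_d$ for the eigenvalues of $Q$ with algebraic multiplicity, the eigenvalues of $I+Q$ are $1+\lambda_1,\dots,1+\lambda_d$, so that
\[
|\det(I+Q)|=\prod_{i=1}^d|1+\lambda_i|\le\prod_{i=1}^d(1+|\lambda_i|)\le(1+\alpha)^d .
\]
For the upper bound this already suffices: if $\det(I+Q)\le 0$ the asserted inequality is immediate since $(1+\alpha)^d>0$, and if $\det(I+Q)\ge 0$ then $\det(I+Q)=|\det(I+Q)|\le(1+\alpha)^d$. (If one prefers to avoid complex eigenvalues, the same bound follows from $|\det(I+Q)|=\prod_i\sigma_i(I+Q)$ together with $\sigma_i(I+Q)\le\|I+Q\|_2\le 1+\|Q\|_2=1+\alpha$.)

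For the lower bound, assume $\alpha<1$. Then $|\lambda_i|\le\alpha<1$ for every $i$, hence $|1+\lambda_i|\ge 1-|\lambda_i|\ge 1-\alpha>0$ and
\[
|\det(I+Q)|=\prod_{i=1}^d|1+\lambda_i|\ge(1-\alpha)^d>0 .
\]
In particular $\det(I+Q)\neq 0$, so it remains only to verify that the sign is positive. I would do this by a homotopy argument: for $t\in[0,1]$ the matrix $I+tQ$ has $\|tQ\|_2=t\alpha\le\alpha<1$, so by the eigenvalue bound $-1$ is not an eigenvalue of $tQ$ and $\det(I+tQ)\neq 0$; thus $t\mapsto\det(I+tQ)$ is a continuous real-valued function on $[0,1]$ that never vanishes and equals $1$ at $t=0$, so by the intermediate value theorem it stays positive, and in particular $\det(I+Q)>0$. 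Combining this with the previous display yields $\det(I+Q)=|\det(I+Q)|\ge(1-\alpha)^d$.

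The only point that is not entirely routine is precisely this last piece of sign bookkeeping: the eigenvalue and singular-value estimates directly control $|\det(I+Q)|$ rather than $\det(I+Q)$ itself, so the main obstacle is establishing $\det(I+Q)>0$ in the regime $\alpha<1$, which I resolve via the connectedness of the segment $\{I+tQ:t\in[0,1]\}$ inside the set of invertible matrices. Everything else — the eigenvalue bound and the two product estimates — is elementary.
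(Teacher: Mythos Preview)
Your proof is correct but follows a genuinely different path from the paper's. The paper first treats the symmetric case by orthogonal diagonalization, then reduces the general upper bound to the symmetric case via the identity $\det(I+Q)^2=\det(I+Q+Q^{\mathrm T}+QQ^{\mathrm T})$; for the lower bound it writes $(I+Q)^{-1}=I+Q_2$ using the Neumann series, bounds $\|Q_2\|_2\le\alpha/(1-\alpha)$, and applies the already-established upper bound to $I+Q_2$ to get $\det(I+Q)=1/\det(I+Q_2)\ge(1-\alpha)^d$. Your argument instead bounds the (possibly complex) eigenvalues of $Q$ by $\alpha$ and reads both inequalities directly off the factorization $\det(I+Q)=\prod_i(1+\lambda_i)$, with a short homotopy $t\mapsto\det(I+tQ)$ to fix the sign in the lower bound. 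Your route is more direct and makes the eigenvalue mechanism transparent; the paper's route stays entirely within real linear algebra and avoids complex eigenvectors, at the cost of the slightly indirect symmetrization and inversion steps.
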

\begin{proof}
	If $Q$ is symmetric, we may let $Q=P^\mathrm{T}\Lambda P$, where $P$ is orthogonal and $\Lambda$ is diagonal. 
	Then $\alpha=\|P^\mathrm{T}\Lambda P\|_2=\|\Lambda\|_2$. 
	By the definition of $\ell^2$-norm, we have $\|\Lambda\|_2=\max_i|\Lambda_{ii}|$ since it is diagonal, and
	\[\begin{split}
		\det(I+Q)&=\det(I+P^\mathrm{T}\Lambda P)=\det(P^\mathrm{T}(I+\Lambda)P)
		\\&= \det(I+\Lambda)=\prod_{i=1}^d(1+\Lambda_{ii})
		\\&\le \prod_{i=1}^d|1+\Lambda_{ii}| \le(\max_i|1+\Lambda_{ii}|)^d
		\\&\le (1+\max_i|\Lambda_{ii}|)^d=(1+\|\Lambda\|_2)^d
		\\&= (1+\alpha)^d
	.\end{split}\]
	For non-symmetric $Q$, 
	\[
		\det(I+Q)^2=\det(I+Q)\det(I+Q^\mathrm{T})=\det(I+Q+Q^\mathrm{T}+QQ^\mathrm{T})
	.\]
	The matrix $Q_1=Q+Q^\mathrm{T}+QQ^\mathrm{T}$ is symmetric, and
	\[ \|Q_1\|_2\le\|Q\|_2+\|Q^\mathrm{T}\|_2+\|QQ^\mathrm{T}\|_2\le 2\|Q\|_2+\|Q\|_2^2=2\alpha+\alpha^2 .\]
	Then $\det(I+Q_1)\le(1+2\alpha+\alpha^2)^d=(1+\alpha)^{2d}$ by what we have proved. 
	This shows $\det(I+Q)\le(1+\alpha)^d$. 

	When $\alpha<1$, it is easy to verify that the sum
	\[ I-Q+Q^2-Q^3+\cdots=(I+Q)^{-1} \]
	is convergent. We let $Q_2=(I+Q)^{-1}-I$, and
	\[\begin{split}
		\|Q_2\|_2&=\left\lVert -Q+Q^2-Q^3+\cdots\right\rVert_2
		\\&\le\|Q\|_2+\|Q^2\|_2+\|Q^3\|_2+\cdots
		\\&\le\alpha+\alpha^2+\alpha^3+\cdots
		=\frac{\alpha}{1-\alpha}
	,\end{split}\]
	\[ \det(I+Q_2)\le(1+\|Q_2\|_2)^d\le\left(1+\frac\alpha{1-\alpha}\right)^d=\frac1{(1-\alpha)^d} .\]
	The conclusion follows from $\det(I+Q)=1/\det(I+Q_2)$. 
\end{proof}

\begin{cor}\label{cor:detJ}
	Let $J(\pmb{x})=\frac{\partial \beta(\pmb{x})}{\partial \pmb{x}}$ be the Jacobian matrix of $\beta$ at $\pmb{x}\in\Omega^\text{m}$. 
	When $\delta_\beta\le\delta_0$, we have
	\[(1-\delta_\beta)^d\le\det J(\pmb{x})\le(1+\delta_\beta)^d .\]
\end{cor}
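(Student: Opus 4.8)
The plan is to apply Lemma~\ref{lem:linalgDet} directly to the matrix $Q(\pmb{x}) = J(\pmb{x}) - I_d = \frac{\partial\beta(\pmb{x})}{\partial\pmb{x}} - I_d$. First I would note that, by the very definition of $\delta_\beta$, for every $\pmb{x}\in\Omega^\text{m}$ one has $\|Q(\pmb{x})\|_2 = \bigl\|\frac{\partial\beta(\pmb{x})}{\partial\pmb{x}} - I_d\bigr\|_2 \le \delta_\beta$. Writing $\alpha = \|Q(\pmb{x})\|_2$, the hypothesis $\delta_\beta \le \delta_0$ together with $\delta_0 \le 1/9$ gives $\alpha \le \delta_\beta < 1$, so both halves of Lemma~\ref{lem:linalgDet} are applicable.

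Lemma~\ref{lem:linalgDet} then yields $(1-\alpha)^d \le \det(I + Q(\pmb{x})) \le (1+\alpha)^d$, and since $I + Q(\pmb{x}) = J(\pmb{x})$ this reads $(1-\alpha)^d \le \det J(\pmb{x}) \le (1+\alpha)^d$. To finish I would pass from $\alpha$ to $\delta_\beta$ by monotonicity of $t \mapsto t^d$ on $[0,\infty)$: from $0 \le 1-\delta_\beta \le 1-\alpha$ we get $(1-\delta_\beta)^d \le (1-\alpha)^d$, and from $1+\alpha \le 1+\delta_\beta$ we get $(1+\alpha)^d \le (1+\delta_\beta)^d$. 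Chaining these inequalities gives $(1-\delta_\beta)^d \le \det J(\pmb{x}) \le (1+\delta_\beta)^d$, which is the assertion.

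There is no real obstacle here: the only point worth checking is that the hypothesis $\delta_\beta \le \delta_0$ does force $\delta_\beta < 1$, which is what makes the lower bound in Lemma~\ref{lem:linalgDet} available; this is immediate since $\delta_0 = \min\bigl(1-(8/9)^{2/d},\, 1/9\bigr) \le 1/9$. The sharper value of $\delta_0$ (in particular the $1-(8/9)^{2/d}$ term) is not needed for this corollary and will instead be used later to obtain a quantitative lower bound on $\det J(\pmb{x})$.
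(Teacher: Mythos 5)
Your proof is correct and follows exactly the paper's route: apply Lemma~\ref{lem:linalgDet} to $Q(\pmb{x}) = J(\pmb{x}) - I_d$, observe $\|Q(\pmb{x})\|_2 \le \delta_\beta < 1$ from the definition of $\delta_\beta$ and the bound $\delta_0 \le 1/9$, and conclude by monotonicity of $t\mapsto t^d$. The paper is terser (it explicitly writes only the upper bound and remarks the lower bound is analogous), but there is no substantive difference.
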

\begin{proof}
	Note that $\|J(\pmb{x})-I_d\|_2=\left\|\frac{\partial \beta(\pmb{x})}{\partial \pmb{x}}-I_d\right\|_2\le\delta_\beta$. 
	By Lemma~\ref{lem:linalgDet}, we have
	\[ \det J(\pmb{x})=\det(I_d+(J(\pmb{x})-I_d))\le(1+\|J(\pmb{x})-I_d\|_2)^d\le(1+\delta_\beta)^d
	.\]
	Since $\delta_\beta\le\delta_0<1$, the other side of the inequality can be derived from a similar argument. 
\end{proof}

\begin{prop}\label{prop:est_tildeuH01}
	We have $\|u\|_{H_0^1(\Omega^\text{m})}\le C_\text{P}R/r$ under Assumption~\ref{ass:vardomEllip}, where $C_\text{P}$ is the Poincar\'e constant of $\Omega^\text{m}$. 
	When $\delta_\beta\le\delta_0$, we further have $\|\tilde u\|_{H_0^1(\Omega^\text{m})}\le 2C_\text{P}R/r$. 
\end{prop}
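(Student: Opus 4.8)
The plan is to prove the two estimates separately; the first is essentially a carbon copy of Lemma~\ref{lem:ellpH01u}, and the second is obtained from it by a change of variables.

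For the bound on $u$, I would test the weak formulation of~\eqref{eq:ellipVarDom} with $v=u\in H_0^1(\Omega^\text{m}\setminus\partial\Omega)$, giving $\int_{\Omega^\text{m}}(\nabla u)^\mathrm{T}\pmb{a}\,\nabla u\,\mathrm{d}\pmb{x}=\int_{\Omega^\text{m}}fu\,\mathrm{d}\pmb{x}$. The left side is at least $r\|u\|_{H_0^1(\Omega^\text{m})}^2$ by the ellipticity lower bound in Assumption~\ref{ass:vardomEllip}, and the right side is at most $\|f\|_{L^2}\|u\|_{L^2}\le RC_\text{P}\|u\|_{H_0^1(\Omega^\text{m})}$ by Cauchy--Schwarz, the Poincar\'e inequality on $\Omega^\text{m}$, and $\|f\|_{L^2}\le R$; dividing yields $\|u\|_{H_0^1(\Omega^\text{m})}\le C_\text{P}R/r$. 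This is even slightly simpler than Lemma~\ref{lem:ellpH01u}, since~\eqref{eq:ellipVarDom} has no zeroth-order term.

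For the bound on $\tilde u=u\circ\beta^{-1}$, I would use the chain rule: writing $J(\pmb{y})=\partial\beta(\pmb{y})/\partial\pmb{y}$, at $\pmb{x}=\beta(\pmb{y})$ we have $(\nabla\tilde u)(\beta(\pmb{y}))=(J(\pmb{y})^{-1})^\mathrm{T}\nabla u(\pmb{y})$. Since $\beta$ is a diffeomorphism of $\bar\Omega^\text{m}$ onto itself it maps $\Omega^\text{m}$ onto $\Omega^\text{m}$, so changing variables $\pmb{x}=\beta(\pmb{y})$ (with $\mathrm{d}\pmb{x}=\det J(\pmb{y})\,\mathrm{d}\pmb{y}$) in $\|\tilde u\|_{H_0^1(\Omega^\text{m})}^2=\int_{\Omega^\text{m}}\|\nabla\tilde u(\pmb{x})\|_2^2\,\mathrm{d}\pmb{x}$ gives
\[
\|\tilde u\|_{H_0^1(\Omega^\text{m})}^2\le\int_{\Omega^\text{m}}\|J(\pmb{y})^{-1}\|_2^2\,\|\nabla u(\pmb{y})\|_2^2\,\det J(\pmb{y})\,\mathrm{d}\pmb{y}.
\]
Because $\|J(\pmb{y})-I_d\|_2\le\delta_\beta<1$, the smallest singular value of $J(\pmb{y})$ is at least $1-\delta_\beta$, hence $\|J(\pmb{y})^{-1}\|_2\le(1-\delta_\beta)^{-1}$, and Corollary~\ref{cor:detJ} gives $\det J(\pmb{y})\le(1+\delta_\beta)^d$; therefore $\|\tilde u\|_{H_0^1(\Omega^\text{m})}^2\le\frac{(1+\delta_\beta)^d}{(1-\delta_\beta)^2}\|u\|_{H_0^1(\Omega^\text{m})}^2$. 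It then remains to check that the prefactor is at most $4$ when $\delta_\beta\le\delta_0=\min(1-(8/9)^{2/d},1/9)$: from $\delta_\beta\le1-(8/9)^{2/d}$ we get $(1-\delta_\beta)^d\ge(8/9)^2$, and combined with $(1+\delta_\beta)(1-\delta_\beta)\le1$ this gives $(1+\delta_\beta)^d\le(1-\delta_\beta)^{-d}\le 81/64$; from $\delta_\beta\le1/9$ we get $(1-\delta_\beta)^{-2}\le 81/64$; multiplying, the prefactor is at most $(81/64)^2<4$, so $\|\tilde u\|_{H_0^1(\Omega^\text{m})}\le 2C_\text{P}R/r$ by the first bound.

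I do not expect a genuine obstacle here. The two places requiring some care are the transpose/inverse bookkeeping in the chain rule and change of variables, and---more pointedly---recognizing that the apparently ad hoc definition of $\delta_0$ is precisely what keeps both the determinant growth $(1+\delta_\beta)^d$ and the inverse-Jacobian blow-up $(1-\delta_\beta)^{-2}$ bounded by $81/64$; of these the control of $(1+\delta_\beta)^d$ is the subtler point and relies on Lemma~\ref{lem:linalgDet}/Corollary~\ref{cor:detJ} together with the elementary inequality $(1+\delta_\beta)(1-\delta_\beta)\le1$.
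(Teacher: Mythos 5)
Your proof is correct and reaches the same bound $\|\tilde u\|_{H_0^1}\le(81/64)\|u\|_{H_0^1}\le 2C_\text{P}R/r$ as the paper, via essentially the same mechanism: the paper's Lemma~\ref{lem:linalgDet}/Corollary~\ref{cor:detJ} for the determinant, and the fact that $\|J-I\|_2\le\delta_\beta$ controls the spectrum of $J$. The only difference is directional: the paper writes the chain rule as $\nabla u(\pmb{x})=J(\pmb{x})^\mathrm{T}\nabla\tilde u(\beta(\pmb{x}))$ and derives the pointwise lower bound $\|\nabla u(\pmb{x})\|_2\ge(1-\delta_\beta)\|\nabla\tilde u(\beta(\pmb{x}))\|_2$ by the reverse triangle inequality, then changes variables $\pmb{y}=\beta(\pmb{x})$ and uses $\det J^{-1}\ge(1+\delta_\beta)^{-d}$; you instead write $\nabla\tilde u\circ\beta=(J^{-1})^\mathrm{T}\nabla u$, invoke the explicit operator-norm bound $\|J^{-1}\|_2\le(1-\delta_\beta)^{-1}$ (from $\sigma_{\min}(J)\ge 1-\delta_\beta$), and change variables $\pmb{x}=\beta(\pmb{y})$ with $\det J\le(1+\delta_\beta)^d$. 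These are algebraically equivalent and yield the identical prefactor $(1+\delta_\beta)^{d/2}(1-\delta_\beta)^{-1}$; the paper's version is marginally more elementary in that it never needs to pass to $J^{-1}$, but nothing substantive hinges on the choice. Your numerical closure using $\delta_0=\min(1-(8/9)^{2/d},1/9)$ and $(1+\delta_\beta)(1-\delta_\beta)\le 1$ matches the paper's, including the final constant.
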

\begin{proof}
	The proof of $\|u\|_{H_0^1(\Omega^\text{m})}\le C_\text{P}R/r$ is the same as Lemma~\ref{lem:ellpH01u} for the case of a fixed domain, in which we may take $c(\pmb{x})\equiv 0$. 
	Now assume $\delta_\beta\le\delta_0$. For $\pmb{x}\in\Omega^\text{m}$, we have $u(\pmb{x})=\tilde u(\beta(\pmb{x}))$, and $\nabla u(\pmb{x})=J(\pmb{x})^\mathrm{T}\nabla\tilde u(\beta(\pmb{x}))$. This gives
	\[\begin{split}
		\|\nabla u(\pmb{x})\|_2&\ge\|\nabla\tilde u(\beta(\pmb{x}))\|_2-\|(I_d-J(\pmb{x}))^\mathrm{T}\nabla\tilde u(\beta(\pmb{x}))\|_2
		\\&\ge\|\nabla\tilde u(\beta(\pmb{x}))\|_2-\|J(\pmb{x})-I_d\|_2\|\nabla\tilde u(\beta(\pmb{x}))\|_2
		\\&\ge\|\nabla\tilde u(\beta(\pmb{x}))\|_2-\delta_\beta\|\nabla\tilde u(\beta(\pmb{x}))\|_2
		\\&=(1-\delta_\beta)\|\nabla\tilde u(\beta(\pmb{x}))\|_2
	,\end{split}\]
	and
	\[\begin{split}
		(1-\delta_\beta)^{-2}\|u\|_{H_0^1(\Omega^\text{m})}^2
		&=(1-\delta_\beta)^{-2}\int_{\Omega^\text{m}}\|\nabla u(\pmb{x})\|_2^2\,\mathrm{d}\pmb{x}
		\\&\ge\int_{\Omega^\text{m}}\|\nabla\tilde u(\beta(\pmb{x}))\|_2^2\,\mathrm{d}\pmb{x}
		\\&=\int_{\Omega^\text{m}}\|\nabla\tilde u(\pmb{y})\|_2^2\det J(\beta^{-1}(\pmb{y}))^{-1}\,\mathrm{d}\pmb{y}
		\\&\ge(1+\delta_\beta)^{-d}\int_{\Omega^\text{m}}\|\nabla\tilde u(\pmb{y})\|_2^2\,\mathrm{d}\pmb{y}
		\\&=(1+\delta_\beta)^{-d}\|\tilde u\|_{H_0^1(\Omega^\text{m})}^2
	\end{split}\]
	by Corollary~\ref{cor:detJ}. Then
	\[\begin{split}
		\|\tilde u\|_{H_0^1(\Omega^\text{m})}
		&\le(1+\delta_\beta)^{d/2}(1-\delta_\beta)^{-1}\|u\|_{H_0^1(\Omega^\text{m})}
		\\&\le \frac98\cdot\left(1-\frac19\right)^{-1}\frac{C_\text{P}R}r
		\\&\le \frac{2C_\text{P}R}r
	.\end{split}\]
	Here we used the fact that $\delta_\beta\le\delta_0\le 1-(8/9)^{2/d}$ implies $(1+\delta_\beta)^{d/2}\le(1-\delta_\beta)^{-d/2}\le 9/8$. 
\end{proof}

\begin{prop}\label{prop:dv_beta_L2}
	Assume $\delta_\beta\le\delta_0$. Then for $v\in H_0^1(\Omega^\text{m})$, we have
	\[\|v\circ\beta-v\|_{L^2(\Omega^\text{m})}\le\|v\|_{H_0^1(\Omega^\text{m})}(1-\delta_\beta)^{-d/2}\delta_\beta
		\le 2\|v\|_{H_0^1(\Omega^\text{m})}\delta_\beta
	.\]
\end{prop}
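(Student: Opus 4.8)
The plan is to prove the inequality first for $v\in C_c^\infty(\Omega^\text{m})$ and then pass to general $v\in H_0^1(\Omega^\text{m})$ by density (recall $C_c^\infty(\Omega^\text{m})$ is dense in $H_0^1(\Omega^\text{m})$). So fix a smooth, compactly supported $v$, extended by zero to all of $\R^d$, which keeps it in $C_c^\infty(\R^d)$. For $\pmb{x}\in\bar\Omega^\text{m}$ and $s\in[0,1]$ set $\phi_s(\pmb{x})=(1-s)\pmb{x}+s\beta(\pmb{x})$; since $\Omega^\text{m}$ is convex the whole segment $[\pmb{x},\beta(\pmb{x})]$ lies in $\bar\Omega^\text{m}$, and the fundamental theorem of calculus gives
\[ v(\beta(\pmb{x}))-v(\pmb{x})=\int_0^1\nabla v(\phi_s(\pmb{x}))^\mathrm{T}\bigl(\beta(\pmb{x})-\pmb{x}\bigr)\,\mathrm{d}s. \]
Using $\|\beta(\pmb{x})-\pmb{x}\|_2\le\delta_\beta$ together with the Cauchy--Schwarz inequality (applied first in the $s$-integral, then to the dot product in $\R^d$) gives $|v(\beta(\pmb{x}))-v(\pmb{x})|^2\le\delta_\beta^2\int_0^1\|\nabla v(\phi_s(\pmb{x}))\|_2^2\,\mathrm{d}s$. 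Integrating over $\pmb{x}\in\Omega^\text{m}$ and applying Fubini, the whole problem reduces to bounding $\int_{\Omega^\text{m}}\|\nabla v(\phi_s(\pmb{x}))\|_2^2\,\mathrm{d}\pmb{x}$ uniformly in $s\in[0,1]$.

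For this reduced task I would change variables via $\phi_s$. Its Jacobian is $D\phi_s(\pmb{x})=I_d+s\bigl(J(\pmb{x})-I_d\bigr)$ with $J(\pmb{x})=\partial\beta(\pmb{x})/\partial\pmb{x}$, and $\|s(J(\pmb{x})-I_d)\|_2\le s\delta_\beta\le\delta_\beta<1$ because $\delta_\beta\le\delta_0\le1/9$. By Lemma~\ref{lem:linalgDet} this gives $\det D\phi_s(\pmb{x})\ge(1-\delta_\beta)^d>0$; and the same estimate $\|D\phi_s-I_d\|_2\le\delta_\beta<1$ on the convex set $\bar\Omega^\text{m}$ forces $\|\phi_s(\pmb{x})-\phi_s(\pmb{x}')\|_2\ge(1-\delta_\beta)\|\pmb{x}-\pmb{x}'\|_2$, so $\phi_s$ is injective and hence a $C^1$-diffeomorphism of $\Omega^\text{m}$ onto the open set $\phi_s(\Omega^\text{m})$. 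Therefore
\[\begin{aligned}
\int_{\Omega^\text{m}}\|\nabla v(\phi_s(\pmb{x}))\|_2^2\,\mathrm{d}\pmb{x}
&\le(1-\delta_\beta)^{-d}\int_{\Omega^\text{m}}\|\nabla v(\phi_s(\pmb{x}))\|_2^2\det D\phi_s(\pmb{x})\,\mathrm{d}\pmb{x}\\
&=(1-\delta_\beta)^{-d}\int_{\phi_s(\Omega^\text{m})}\|\nabla v(\pmb{y})\|_2^2\,\mathrm{d}\pmb{y}\le(1-\delta_\beta)^{-d}\|v\|_{H_0^1(\Omega^\text{m})}^2,
\end{aligned}\]
the last step because $\nabla v$ vanishes outside $\Omega^\text{m}$. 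Combining with the previous paragraph and $\int_0^1\mathrm{d}s=1$ yields $\|v\circ\beta-v\|_{L^2(\Omega^\text{m})}\le\delta_\beta(1-\delta_\beta)^{-d/2}\|v\|_{H_0^1(\Omega^\text{m})}$, and $(1-\delta_\beta)^{-d/2}\le 9/8\le2$ since $\delta_\beta\le\delta_0\le1-(8/9)^{2/d}$, exactly as observed in the proof of Proposition~\ref{prop:est_tildeuH01}. This settles the smooth case.

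Finally I would remove the smoothness assumption: choose $v_k\in C_c^\infty(\Omega^\text{m})$ with $v_k\to v$ in $H_0^1(\Omega^\text{m})$. Since $\beta$ is a $C^1$-diffeomorphism of the compact set $\bar\Omega^\text{m}$, the quantity $|\det D\beta^{-1}|$ is bounded, so $w\mapsto w\circ\beta$ is a bounded operator on $L^2(\Omega^\text{m})$; hence $v_k\circ\beta\to v\circ\beta$ in $L^2(\Omega^\text{m})$ while $\|v_k\|_{H_0^1(\Omega^\text{m})}\to\|v\|_{H_0^1(\Omega^\text{m})}$, and passing to the limit in the inequality for the $v_k$ finishes the proof. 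The one genuinely delicate point is the change-of-variables step, namely the global injectivity of $\phi_s$ on $\Omega^\text{m}$: this is exactly where the convexity hypothesis on the master domain enters essentially; everything else is routine estimation.
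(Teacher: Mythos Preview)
Your proof is correct and follows essentially the same route as the paper's: interpolate along the straight-line homotopy $\phi_s=(1-s)\mathrm{id}+s\beta$, apply Cauchy--Schwarz in $s$, bound the Jacobian determinant below by $(1-\delta_\beta)^d$ via Lemma~\ref{lem:linalgDet}, change variables, and pass to general $v$ by density. The only cosmetic differences are that the paper works with $C_0^1$ rather than $C_c^\infty$ and establishes injectivity of $\phi_s$ directly from the $\delta_\beta$-Lipschitz property of $\beta-\mathrm{id}$ rather than via your mean-value estimate, but the arguments are interchangeable.
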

\begin{proof}
	Since $C_0^1(\Omega^\text{m})$ is dense in $H_0^1(\Omega^\text{m})$, we may assume without loss of generality that $v\in C_0^1(\Omega^\text{m})$. 
	Define $\beta_t(\pmb{x})=\pmb{x}+t(\beta(\pmb{x})-\pmb{x})$ for $t\in[0,1]$. 
	Then $\beta_0=\mathrm{id}$, $\beta_1=\beta$, $\beta_t(\pmb{x})\in\Omega^\text{m}$ by the convexity of $\Omega^\text{m}$, and
	\[\begin{split}
		\frac{\mathrm{d}}{\mathrm{d}t}v(\beta_t(\pmb{x}))&=\nabla v(\beta_t(\pmb{x}))\cdot(\beta(\pmb{x})-\pmb{x}),
		\\\left\lvert \frac{\mathrm{d}}{\mathrm{d}t}v(\beta_t(\pmb{x}))\right\rvert
		&\le|\nabla v(\beta_t(\pmb{x}))|\cdot\|\beta-\mathrm{id}\|_{L^\infty(\Omega^\text{m};\R^d)}
		\\&\le\delta_\beta|\nabla v(\beta_t(\pmb{x}))|
		,\\(v(\beta(\pmb{x}))-v(\pmb{x}))^2&=\left(\int_0^1\frac{\mathrm{d}}{\mathrm{d}t}v(\beta_t(\pmb{x}))\,\mathrm{d}t\right)^2
		\\&\le\int_0^1\left(\frac{\mathrm{d}}{\mathrm{d}t}v(\beta_t(\pmb{x}))\right)^2\,\mathrm{d}t\cdot\int_0^11^2\,\mathrm{d}t
		\\&\le\delta_\beta^2\int_0^1|\nabla v(\beta_t(\pmb{x}))|^2\,\mathrm{d}t
	.\end{split}\]
	As $\|\frac{\partial \beta(\pmb{x})}{\partial \pmb{x}}-I_d\|_2\le\delta_\beta$ for any $\pmb{x}\in\Omega^\text{m}$, the mapping $\pmb{x}\mapsto\beta(\pmb{x})-\pmb{x}$ is $\delta_\beta$-Lipschitz continuous, and $\beta_t(\pmb{x})$ is an injection. 
	In fact, if $\beta_t(\pmb{x}_1)=\beta_t(\pmb{x}_2)$ for some $t\in(0,1)$, then $\pmb{x}_2-\pmb{x}_1=t(\beta(\pmb{x}_1)-\pmb{x}_1)-t(\beta(\pmb{x}_2)-\pmb{x}_2)$, and $|\pmb{x}_1-\pmb{x}_2|\le t\delta_\beta|\pmb{x}_1-\pmb{x}_2|$, thereby $|\pmb{x}_1-\pmb{x}_2|=0$ since $t\delta_\beta<\delta_\beta\le 1$. 
	\par Now for any $\pmb{x}\in\Omega^\text{m}$, $\|\frac{\partial \beta_t(\pmb{x})}{\partial \pmb{x}}-I_d\|_2=\|t(\frac{\partial \beta(\pmb{x})}{\partial \pmb{x}}-I_d)\|_2\le t\delta_\beta$, and Lemma~\ref{lem:linalgDet} would imply $\det\frac{\partial \beta_t(\pmb{x})}{\partial \pmb{x}}\ge(1-t\delta_\beta)^d\ge(1-\delta_\beta)^d$. 
	We have
	\[\begin{split}
		\int_{\Omega^\text{m}}|\nabla v(\beta_t(\pmb{x}))|^2\,\mathrm{d}\pmb{x}
		&=\int_{\beta_t(\Omega^\text{m})}|\nabla v(\pmb{y})|^2\left(\det\frac{\partial \beta_t}{\partial \pmb{x}}\biggm|_{\pmb{x}=\beta_t^{-1}(\pmb{y})}\right)^{-1}\,\mathrm{d}\pmb{y}
		\\&\le\int_{\Omega^\text{m}}|\nabla v(\pmb{y})|^2(1-\delta_\beta)^{-d}\,\mathrm{d}\pmb{y}
		\\&=(1-\delta_\beta)^{-d}\|v\|_{H_0^1(\Omega^\text{m})}^2
		,\\\|v\circ\beta-v\|_{L^2(\Omega^\text{m})}^2&=\int_{\Omega^\text{m}}(v(\beta(\pmb{x}))-v(\pmb{x}))^2\,\mathrm{d}\pmb{x}
		\\&\le\int_{\Omega^\text{m}}\delta_\beta^2\int_0^1|\nabla v(\beta_t(\pmb{x}))|^2\,\mathrm{d}t\,\mathrm{d}\pmb{x}
		\\&\le\delta_\beta^2\int_0^1(1-\delta_\beta)^{-d}\|v\|_{H_0^1(\Omega^\text{m})}^2\,\mathrm{d}t
		\\&=\delta_\beta^2(1-\delta_\beta)^{-d}\|v\|_{H_0^1(\Omega^\text{m})}^2
	.\end{split}\]
	We use $(1-\delta_\beta)^{-d/2}\le 9/8< 2$ to conclude the proof. 
\end{proof}

Taking $v=\tilde u$ in Proposition~\ref{prop:dv_beta_L2}, a direct combination with Proposition~\ref{prop:est_tildeuH01} gives
\begin{cor}\label{cor:du_tildeu}
	Assuming $\delta_\beta\le\delta_0$ and Assumption~\ref{ass:vardomEllip} holds. Then $\|u-\tilde u\|_{L^2(\Omega^\text{m})}\le\frac4rC_\text{P}R\delta_\beta$. 
\end{cor}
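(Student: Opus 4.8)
The plan is to notice that the quantity $u-\tilde u$ is literally of the form ``$v\circ\beta-v$'' with $v=\tilde u$, so the corollary follows by feeding $v=\tilde u$ into Proposition~\ref{prop:dv_beta_L2} and then controlling the factor $\|\tilde u\|_{H_0^1(\Omega^\text{m})}$ by Proposition~\ref{prop:est_tildeuH01}. No new estimate is needed; the work is purely a composition of two results already in hand.

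First I would unwind the definition $\tilde u(\pmb{x})=u(\beta^{-1}(\pmb{x}))$: composing with $\beta$ gives $\tilde u(\beta(\pmb{x}))=u(\pmb{x})$ for every $\pmb{x}\in\bar\Omega^\text{m}$, i.e. $u=\tilde u\circ\beta$ as elements of $L^2(\Omega^\text{m})$. Next I would record that $\tilde u\in H_0^1(\Omega^\text{m}\setminus\partial\Omega_2)\subset H_0^1(\Omega^\text{m})$, so that Proposition~\ref{prop:dv_beta_L2} is applicable to $v=\tilde u$. Since $\delta_\beta\le\delta_0$ by assumption, that proposition yields
\[
\|u-\tilde u\|_{L^2(\Omega^\text{m})}=\|\tilde u\circ\beta-\tilde u\|_{L^2(\Omega^\text{m})}\le 2\|\tilde u\|_{H_0^1(\Omega^\text{m})}\,\delta_\beta .
\]
Finally, Proposition~\ref{prop:est_tildeuH01} (again using $\delta_\beta\le\delta_0$) gives $\|\tilde u\|_{H_0^1(\Omega^\text{m})}\le 2C_\text{P}R/r$, and substituting this in produces the claimed bound $\|u-\tilde u\|_{L^2(\Omega^\text{m})}\le \tfrac{4}{r}C_\text{P}R\,\delta_\beta$.

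The only thing to be careful about — and it is minor rather than a genuine obstacle — is the bookkeeping that $\tilde u$ actually lies in $H_0^1(\Omega^\text{m})$ and that both auxiliary propositions hold under exactly the hypothesis $\delta_\beta\le\delta_0$; once that is checked, the proof is a two-line chain of inequalities. I do not anticipate any analytic difficulty here, since the substantive estimates (the change-of-variables Jacobian bounds, the interpolation $\beta_t$ between $\beta$ and the identity, and the a priori $H_0^1$ bound on $u$ and $\tilde u$) have all been carried out in Lemma~\ref{lem:linalgDet}, Corollary~\ref{cor:detJ}, Proposition~\ref{prop:est_tildeuH01}, and Proposition~\ref{prop:dv_beta_L2}.
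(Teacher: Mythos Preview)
Your proposal is correct and matches the paper's own argument exactly: the paper states the corollary as an immediate consequence of ``taking $v=\tilde u$ in Proposition~\ref{prop:dv_beta_L2}, a direct combination with Proposition~\ref{prop:est_tildeuH01}.'' Your observation that $u=\tilde u\circ\beta$ (so that $u-\tilde u=\tilde u\circ\beta-\tilde u$) is the right way to make this explicit, and the remaining chain of inequalities is precisely what the paper intends.
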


\begin{lem}\label{lem:eqn4tildeu}
	$\tilde u\in H^1_0(\Omega^\text{m}\setminus\partial\Omega_2)$ is the weak solution to $\tilde L\tilde u=\tilde f$ in $\Omega^\text{m}\setminus\partial\Omega_2$, where
	\[\begin{split}
		\tilde Lu(\pmb{y})&=-\nabla_y\cdot(\pmb{\tilde a}(\pmb{y})\nabla_yu(\pmb{y}))
		,\\\pmb{\tilde a}(\pmb{y})&=\pmb{\hat a}(\beta^{-1}(\pmb{y}))
		,\\\pmb{\hat a}(\pmb{x})&=\frac{J(\pmb{x})\pmb{a}(\pmb{x})J(\pmb{x})^\mathrm{T}}{\det J(\pmb{x})}
		,\\\tilde f(\pmb{y})&=\frac{f(\beta^{-1}(\pmb{y}))}{\det J(\beta^{-1}(\pmb{y}))}
	\end{split}\]
	for all $\pmb{y}\in\Omega^\text{m}$, $\pmb{x}=\beta^{-1}(\pmb{y})$. 
\end{lem}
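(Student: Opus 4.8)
The plan is to derive the equation for $\tilde u$ by a direct change of variables in the weak formulation. Recall that $u\in H^1_0(\Omega^\text{m}\setminus\partial\Omega)$ satisfies
\[
\int_{\Omega^\text{m}}(\nabla u(\pmb x))^{\mathrm T}\pmb a(\pmb x)\nabla v(\pmb x)\,\mathrm d\pmb x=\int_{\Omega^\text{m}}f(\pmb x)v(\pmb x)\,\mathrm d\pmb x
\]
for every $v\in H^1_0(\Omega^\text{m}\setminus\partial\Omega)$. The first step is to observe that, since $\beta$ is a $C^1$-diffeomorphism of $\bar\Omega^\text{m}$ onto itself carrying $\partial\Omega^\text{m}$ to $\partial\Omega^\text{m}$ and $\partial\Omega$ to $\partial\Omega_2$, the pullback $w\mapsto w\circ\beta$ is a linear bijection of $H^1_0(\Omega^\text{m}\setminus\partial\Omega_2)$ onto $H^1_0(\Omega^\text{m}\setminus\partial\Omega)$; its continuity in both directions follows from Corollary~\ref{cor:detJ} (which bounds $\det J$ away from $0$ and $\infty$) together with the boundedness of $\|J\|_2$ and $\|J^{-1}\|_2$ on $\bar\Omega^\text{m}$. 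Hence it suffices to test the equation for $u$ against functions of the form $v=w\circ\beta$ with $w\in H^1_0(\Omega^\text{m}\setminus\partial\Omega_2)$ arbitrary.

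Next I would substitute $\pmb y=\beta(\pmb x)$ in both integrals. Arguing first for $u,v\in C_0^1$ and then passing to the limit, the chain rule gives $\nabla_x u(\pmb x)=J(\pmb x)^{\mathrm T}\nabla_y\tilde u(\pmb y)$ and $\nabla_x v(\pmb x)=J(\pmb x)^{\mathrm T}\nabla_y w(\pmb y)$ at $\pmb x=\beta^{-1}(\pmb y)$ (using $\tilde u\circ\beta=u$), while $\mathrm d\pmb x=(\det J(\pmb x))^{-1}\,\mathrm d\pmb y$ with $\det J>0$ by Corollary~\ref{cor:detJ}. Consequently
\[
(\nabla_x u)^{\mathrm T}\pmb a\,\nabla_x v=(\nabla_y\tilde u)^{\mathrm T}\,J\pmb a J^{\mathrm T}\,\nabla_y w,
\]
all evaluated at $\pmb x=\beta^{-1}(\pmb y)$, so the left-hand integral becomes $\int_{\Omega^\text{m}}(\nabla_y\tilde u)^{\mathrm T}\pmb{\tilde a}(\pmb y)\nabla_y w\,\mathrm d\pmb y$ with $\pmb{\tilde a}(\pmb y)=\bigl((\det J)^{-1}J\pmb a J^{\mathrm T}\bigr)\bigr|_{\pmb x=\beta^{-1}(\pmb y)}=\pmb{\hat a}(\beta^{-1}(\pmb y))$, and the right-hand integral becomes $\int_{\Omega^\text{m}}f(\beta^{-1}(\pmb y))\,(\det J(\beta^{-1}(\pmb y)))^{-1}\,w(\pmb y)\,\mathrm d\pmb y=\int_{\Omega^\text{m}}\tilde f(\pmb y)w(\pmb y)\,\mathrm d\pmb y$.

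Putting these together yields $\int_{\Omega^\text{m}}(\nabla\tilde u)^{\mathrm T}\pmb{\tilde a}\nabla w\,\mathrm d\pmb y=\int_{\Omega^\text{m}}\tilde f w\,\mathrm d\pmb y$ for all $w\in H^1_0(\Omega^\text{m}\setminus\partial\Omega_2)$, which is exactly the weak formulation of $\tilde L\tilde u=\tilde f$ in $\Omega^\text{m}\setminus\partial\Omega_2$ with the required boundary condition; combined with the already-noted fact $\tilde u\in H^1_0(\Omega^\text{m}\setminus\partial\Omega_2)$, this proves the claim. I would also record in passing that $\pmb{\tilde a}$ is symmetric (inherited from $\pmb a$) and uniformly elliptic, since $J$ is invertible with $\det J$ bounded above and below, so the transformed problem still falls within the framework of the earlier estimates such as Proposition~\ref{prop:ellpLip}. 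The only step calling for genuine care — and the main, albeit mild, obstacle — is the first one: verifying that composition with $\beta$ maps the two $H^1_0$ spaces onto each other and that the change-of-variables and chain-rule manipulations are legitimate for Sobolev rather than merely smooth functions; once that is granted, the remainder is purely algebraic bookkeeping of $J$, $J^{\mathrm T}$ and $\det J$.
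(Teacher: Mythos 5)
Your proof is correct and follows essentially the same route as the paper: change of variables $\pmb y=\beta(\pmb x)$ in the weak formulation, with the chain rule $\nabla_x u=J^{\mathrm T}\nabla_y\tilde u$ and the Jacobian factor $\det J$ supplying the formulas for $\pmb{\tilde a}$ and $\tilde f$. You are slightly more explicit than the paper about the density/approximation step ($C_0^1$) and about the fact that $w\mapsto w\circ\beta$ is a bijection between the two $H^1_0$ spaces, but the substance is identical.
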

\begin{proof}
	For any $\tilde v\in H^1_0(\Omega^\text{m}\setminus\partial\Omega_2)$, we let $v(\pmb{x})=\tilde v(\beta(\pmb{x}))$, and then 
	\[\nabla_xv(\pmb{x})=J(\pmb{x})^\mathrm{T}\nabla_y\tilde v(\beta(\pmb{x})).\]
	Since $\beta(\pmb{x})$ is a $C^1$-diffeomorphism from $\bar\Omega^\text{m}$ to itself, $J(\pmb{x})$ is continuous on $\bar\Omega^\text{m}$, and then $\|J(\pmb{x})\|_2$ must be bounded. 
	This implies $v\in H^1_0(\Omega^\text{m}\setminus\partial\Omega)$. 
	We have
	\[\begin{split}
		\int_{\Omega^\text{m}}f(\pmb{x})v(\pmb{x})\,\mathrm{d}\pmb{x}
		&=\int_{\Omega^\text{m}}(\nabla_xu(\pmb{x}))^\mathrm{T}\pmb{a}(\pmb{x})\nabla_xv(\pmb{x})\,\mathrm{d}\pmb{x}
		\\&=\int_{\Omega^\text{m}}(J(\pmb{x})^\mathrm{T}\nabla_y\tilde u(\beta(\pmb{x})))^\mathrm{T}\pmb{a}(\pmb{x})J(\pmb{x})^\mathrm{T}\nabla_y\tilde v(\beta(\pmb{x}))\,\mathrm{d}\pmb{x}
		\\&=\int_{\Omega^\text{m}}(\nabla_y\tilde u(\beta(\pmb{x})))^\mathrm{T}\pmb{\hat a}(\pmb{x})\nabla_y\tilde v(\beta(\pmb{x}))\det J(\pmb{x})\,\mathrm{d}\pmb{x}
		\\&=\int_{\Omega^\text{m}}(\nabla_y\tilde u(\pmb{y}))^\mathrm{T}\pmb{\hat a}(\beta^{-1}(\pmb{y}))\nabla_y\tilde v(\pmb{y})\,\mathrm{d}\pmb{y}
		\\&=\int_{\Omega^\text{m}}(\nabla_y\tilde u(\pmb{y}))^\mathrm{T}\pmb{\tilde a}(\pmb{y})\nabla_y\tilde v(\pmb{y})\,\mathrm{d}\pmb{y}
	.\end{split}\]
	Furthermore, 
	\begin{equation}\label{eq:vardomEllip_ftilde}\begin{split}
		\int_{\Omega^\text{m}}f(\pmb{x})v(\pmb{x})\,\mathrm{d}\pmb{x}
		&=\int_{\Omega^\text{m}}\tilde f(\beta(\pmb{x}))\tilde v(\beta(\pmb{x}))\det J(\pmb{x})\,\mathrm{d}\pmb{x}
		\\&=\int_{\Omega^\text{m}}\tilde f(\pmb{y})\tilde v(\pmb{y})\,\mathrm{d}\pmb{y}
	.\end{split}\end{equation}
	Since these equalities hold for any $\tilde v\in H^1_0(\Omega^\text{m}\setminus\partial\Omega_2)$, the conclusion follows from the definition of a weak solution. 
\end{proof}

\begin{prop}\label{prop:df_tildef}
	If $\delta_\beta\le\delta_0$, 
	we have $\|\tilde f-f\|_{H^{-1}(\Omega^\text{m})}\le R\delta_\beta(1-\delta_\beta)^{-d/2}\le 2R\delta_\beta$. 
\end{prop}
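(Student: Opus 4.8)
The plan is to estimate the $H^{-1}(\Omega^\text{m})$-norm directly from its dual characterization: since
$\|g\|_{H^{-1}(\Omega^\text{m})}=\sup\{\langle g,v\rangle:v\in H_0^1(\Omega^\text{m}),\ \|v\|_{H_0^1(\Omega^\text{m})}\le 1\}$, it suffices to bound $\langle \tilde f-f,v\rangle$ for an arbitrary such test function $v$. The first step is to rewrite the pairing with $\tilde f$ via the change of variables $\pmb{y}=\beta(\pmb{x})$. From the definition of $\tilde f$ in Lemma~\ref{lem:eqn4tildeu} we have $\tilde f(\beta(\pmb{x}))=f(\pmb{x})/\det J(\pmb{x})$, hence
\[
\langle \tilde f,v\rangle=\int_{\Omega^\text{m}}\tilde f(\pmb{y})v(\pmb{y})\,\mathrm{d}\pmb{y}
=\int_{\Omega^\text{m}}\tilde f(\beta(\pmb{x}))\,v(\beta(\pmb{x}))\,\det J(\pmb{x})\,\mathrm{d}\pmb{x}
=\int_{\Omega^\text{m}}f(\pmb{x})\,v(\beta(\pmb{x}))\,\mathrm{d}\pmb{x},
\]
which is essentially the identity already recorded in~\eqref{eq:vardomEllip_ftilde}. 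Subtracting $\langle f,v\rangle=\int_{\Omega^\text{m}}f(\pmb{x})v(\pmb{x})\,\mathrm{d}\pmb{x}$ then yields the clean expression
\[
\langle \tilde f-f,v\rangle=\int_{\Omega^\text{m}}f(\pmb{x})\bigl(v(\beta(\pmb{x}))-v(\pmb{x})\bigr)\,\mathrm{d}\pmb{x}.
\]

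The second step is a direct application of tools already established. By Cauchy--Schwarz, $|\langle \tilde f-f,v\rangle|\le\|f\|_{L^2(\Omega^\text{m})}\,\|v\circ\beta-v\|_{L^2(\Omega^\text{m})}$. Here $\|f\|_{L^2(\Omega^\text{m})}\le R$ by Assumption~\ref{ass:vardomEllip}(3), and $\|v\circ\beta-v\|_{L^2(\Omega^\text{m})}\le\|v\|_{H_0^1(\Omega^\text{m})}(1-\delta_\beta)^{-d/2}\delta_\beta$ by Proposition~\ref{prop:dv_beta_L2} (this is where the hypothesis $\delta_\beta\le\delta_0$ is used). Taking the supremum over $v$ with $\|v\|_{H_0^1(\Omega^\text{m})}\le 1$ gives $\|\tilde f-f\|_{H^{-1}(\Omega^\text{m})}\le R\delta_\beta(1-\delta_\beta)^{-d/2}$. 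Finally, the bound $(1-\delta_\beta)^{-d/2}\le 9/8<2$ — already invoked in the proofs of Propositions~\ref{prop:est_tildeuH01} and~\ref{prop:dv_beta_L2}, valid since $\delta_\beta\le\delta_0\le 1-(8/9)^{2/d}$ — yields the second inequality $\|\tilde f-f\|_{H^{-1}(\Omega^\text{m})}\le 2R\delta_\beta$.

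I do not expect any real obstacle: the statement is a short corollary of the change-of-variables bookkeeping in Lemma~\ref{lem:eqn4tildeu} combined with Proposition~\ref{prop:dv_beta_L2}. The only point requiring care is keeping the test function on the correct side of the diffeomorphism — after the substitution it must be $v\circ\beta$, not $v\circ\beta^{-1}$, that is paired against $f$ — so that Proposition~\ref{prop:dv_beta_L2} applies verbatim to $v$ itself, rather than forcing a variant with $\delta_{\beta^{-1}}$ in place of $\delta_\beta$.
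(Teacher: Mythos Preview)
Your proposal is correct and follows essentially the same approach as the paper: rewrite $\langle\tilde f,v\rangle$ via the change of variables from Lemma~\ref{lem:eqn4tildeu} as $\int_{\Omega^\text{m}}f(\pmb{x})v(\beta(\pmb{x}))\,\mathrm{d}\pmb{x}$, subtract $\langle f,v\rangle$, apply Cauchy--Schwarz together with Assumption~\ref{ass:vardomEllip}(3) and Proposition~\ref{prop:dv_beta_L2}, and take the supremum over unit test functions. Your explicit remark about ending up with $v\circ\beta$ (not $v\circ\beta^{-1}$) and your spelling out of the bound $(1-\delta_\beta)^{-d/2}\le 9/8<2$ are more detailed than the paper, but the argument is the same.
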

\begin{proof}
	
	Assume $v\in H^1_0(\Omega^\text{m})$. 
	Similar to~\eqref{eq:vardomEllip_ftilde}, 
	\[\langle\tilde f,v\rangle\triangleq
		\int_{\Omega^\text{m}}\tilde f(\pmb{x})v(\pmb{x})\,\mathrm{d}\pmb{x}
		=\int_{\Omega^\text{m}}f(\pmb{x})v(\beta(\pmb{x}))\,\mathrm{d}\pmb{x}
	,\]
	and therefore
	\[\begin{split}
		\langle\tilde f-f,v\rangle&=\langle f,v\circ\beta-v\rangle
		\\&\le\|f\|_{L^2(\Omega^\text{m})}\|v\circ\beta-v\|_{L^2(\Omega^\text{m})}
		\\&\le R\cdot \|v\|_{H_0^1(\Omega^\text{m})}(1-\delta_\beta)^{-d/2}\delta_\beta
	,\end{split}\]
	where the last inequality makes use of Assumption~\ref{ass:vardomEllip}(3) and Proposition~\ref{prop:dv_beta_L2}. 
	The conclusion follows immediately by the arbitrariness of $v$. 
\end{proof}

\begin{prop}\label{prop:estiJx}
	Assume $\delta_\beta<1$. 
	For all $\pmb{x}\in\Omega^\text{m}$, we have
	\[\left\lVert \frac{J(\pmb{x})}{\sqrt{\det J(\pmb{x})}}-I\right\rVert_2\le\delta_\beta(1-\delta_\beta)^{-d/2}+(1-\delta_\beta)^{-d/2}-1
	.\] 
	Furthermore, if $\delta_\beta\le\delta_0$, then
	\[\left\lVert \frac{J(\pmb{x})}{\sqrt{\det J(\pmb{x})}}-I\right\rVert_2\le C_J\delta_\beta\le\frac14
	,\] 
	where $C_J=\bigl(9+\delta_0^{-1}\bigr)/8$. 
\end{prop}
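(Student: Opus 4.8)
The plan is to split the matrix $\dfrac{J(\pmb{x})}{\sqrt{\det J(\pmb{x})}}-I$ into a ``numerator'' part and a scalar part. Writing
\[
\frac{J(\pmb{x})}{\sqrt{\det J(\pmb{x})}}-I
=\frac{J(\pmb{x})-I}{\sqrt{\det J(\pmb{x})}}
+\left(\frac{1}{\sqrt{\det J(\pmb{x})}}-1\right)I,
\]
the triangle inequality for $\|\cdot\|_2$ gives
\[
\left\lVert\frac{J(\pmb{x})}{\sqrt{\det J(\pmb{x})}}-I\right\rVert_2
\le\frac{\|J(\pmb{x})-I\|_2}{\sqrt{\det J(\pmb{x})}}
+\left\lvert\frac{1}{\sqrt{\det J(\pmb{x})}}-1\right\rvert.
\]
First I would recall that $\|J(\pmb{x})-I\|_2\le\delta_\beta$ by the definition of $\delta_\beta$, and that Lemma~\ref{lem:linalgDet} applied with $Q=J(\pmb{x})-I$ (whose norm is at most $\delta_\beta<1$) yields $(1-\delta_\beta)^d\le\det J(\pmb{x})\le(1+\delta_\beta)^d$. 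Hence $\sqrt{\det J(\pmb{x})}\ge(1-\delta_\beta)^{d/2}$, so the numerator term is bounded by $\delta_\beta(1-\delta_\beta)^{-d/2}$.

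For the scalar term I would distinguish two cases. If $\det J(\pmb{x})\ge1$, then $\lvert(\det J(\pmb{x}))^{-1/2}-1\rvert=1-(\det J(\pmb{x}))^{-1/2}\le1-(1+\delta_\beta)^{-d/2}$; by convexity of $t\mapsto t^{-d/2}$ on $(0,\infty)$ one has $(1-\delta_\beta)^{-d/2}+(1+\delta_\beta)^{-d/2}\ge 2$, whence $1-(1+\delta_\beta)^{-d/2}\le(1-\delta_\beta)^{-d/2}-1$. If instead $\det J(\pmb{x})<1$, then $\lvert(\det J(\pmb{x}))^{-1/2}-1\rvert=(\det J(\pmb{x}))^{-1/2}-1\le(1-\delta_\beta)^{-d/2}-1$ directly. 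In either case the scalar term is at most $(1-\delta_\beta)^{-d/2}-1$, and combining with the numerator estimate proves the first inequality.

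For the refined bound I would assume $\delta_\beta\le\delta_0$. From $\delta_\beta\le1-(8/9)^{2/d}$ we get $(1-\delta_\beta)^{d/2}\ge8/9$, i.e. $(1-\delta_\beta)^{-d/2}\le9/8$, so the numerator term is at most $\tfrac98\delta_\beta$. For the scalar term, the function $g(\delta)=(1-\delta)^{-d/2}-1$ is convex on $[0,1)$ with $g(0)=0$, hence lies below its chord on $[0,\delta_0]$: $g(\delta_\beta)\le(\delta_\beta/\delta_0)\,g(\delta_0)\le(\delta_\beta/\delta_0)(\tfrac98-1)=\delta_\beta/(8\delta_0)$. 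Adding the two contributions gives $\tfrac98\delta_\beta+\tfrac{1}{8\delta_0}\delta_\beta=\tfrac{9+\delta_0^{-1}}{8}\delta_\beta=C_J\delta_\beta$, and since $\delta_\beta\le\delta_0\le1/9$ we conclude $C_J\delta_\beta\le C_J\delta_0=\tfrac{9\delta_0+1}{8}\le\tfrac{1+1}{8}=\tfrac14$.

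The main obstacle I anticipate is controlling the scalar factor $\lvert(\det J)^{-1/2}-1\rvert$ in the stated one-sided form: because $\det J$ may lie on either side of $1$, one must treat both cases, and in the case $\det J\ge1$ invoke the elementary convexity inequality $(1-\delta_\beta)^{-d/2}+(1+\delta_\beta)^{-d/2}\ge2$; once this is in place, the remainder is bookkeeping with the explicit value of $\delta_0$.
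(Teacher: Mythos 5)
Your proof is correct and follows essentially the same route as the paper: the same triangle-inequality split of $\frac{J}{\sqrt{\det J}}-I$, the same determinant bounds from Corollary~\ref{cor:detJ}, and the same chord/convexity argument for the refined $C_J\delta_\beta$ estimate. The one place you are more careful than the paper is the scalar term: the paper asserts $\max((1-\delta_\beta)^{-d/2}-1,\,1-(1+\delta_\beta)^{-d/2})=(1-\delta_\beta)^{-d/2}-1$ without comment, whereas you explicitly supply the convexity inequality $(1-\delta_\beta)^{-d/2}+(1+\delta_\beta)^{-d/2}\ge 2$ that makes this collapse valid — a small but genuine tightening of the argument.
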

\begin{proof}
	Making use of Corollary~\ref{cor:detJ}
	and the fact that the function $\psi(s)=|\frac1{\sqrt s}-1|$ achieves its maximum at endpoints, 
	we have
	\[\begin{split}
		\left\lVert \frac{J(\pmb{x})}{\sqrt{\det J(\pmb{x})}}-I\right\rVert_2
		&\le\left\lVert \frac{J(\pmb{x})-I}{\sqrt{\det J(\pmb{x})}}\right\rVert_2
		+\left\lVert\biggl(\frac1{\sqrt{\det J(\pmb{x})}}-1\biggr)I\right\rVert_2
		\\&=\frac{\|J(\pmb{x})-I\|_2}{\sqrt{\det J(\pmb{x})}}
		+\left|\frac1{\sqrt{\det J(\pmb{x})}}-1\right|
		\\&\le\delta_\beta(1-\delta_\beta)^{-d/2}+\max((1-\delta_\beta)^{-d/2}-1, 1-(1+\delta_\beta)^{-d/2})
		\\&=\delta_\beta(1-\delta_\beta)^{-d/2}+(1-\delta_\beta)^{-d/2}-1
	.\end{split}\]
	When $\delta_\beta\le\delta_0$, 
	the condition $\delta_\beta\le 1-(8/9)^{2/d}$ implies $(1-\delta_\beta)^{-d/2}\le 9/8$. 
	The convexity of the function $\phi(t)=(1-t)^{-d/2}$ on $0\le t<1$ ensures $\frac{\phi(\delta_\beta)-\phi(0)}{\delta_\beta}\le\frac{\phi(\delta_0)-\phi(0)}{\delta_0}$, hence
	\[ (1-\delta_\beta)^{-d/2}-1\le((1-\delta_0)^{-d/2}-1)\frac{\delta_\beta}{\delta_0}\le\left(\frac98-1\right)\frac{\delta_\beta}{\delta_0}=\frac{\delta_\beta}{8\delta_0}
	.\]
	This gives
	\[\delta_\beta(1-\delta_\beta)^{-d/2}+(1-\delta_\beta)^{-d/2}-1
		\le\frac98\delta_\beta+\frac{\delta_\beta}{8\delta_0}=C_J\delta_\beta
	.\] 
	Since $\delta_\beta\le\frac19$ holds as well, we further have
	\[ \frac98\delta_\beta+\frac18\cdot\frac{\delta_\beta}{\delta_0}\le\frac98\cdot\frac19+\frac18\cdot 1=\frac14
	.\] 
\end{proof}

\begin{prop}
	If $\delta_\beta\le\delta_0$, then $\pmb{\tilde a}\in\Sigma_{r/2,2R}^d(\Omega^\text{m})$. 
\end{prop}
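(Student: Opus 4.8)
The plan is to reduce the assertion to a pointwise statement about $\pmb{\hat a}(\pmb{x})$ and then control the congruence transformation by the matrix $M(\pmb{x}):=J(\pmb{x})/\sqrt{\det J(\pmb{x})}$, which Proposition~\ref{prop:estiJx} shows to be close to the identity. Since $\beta$ is a $C^1$-diffeomorphism of $\bar\Omega^\text{m}$ onto itself, it maps $\Omega^\text{m}$ onto $\Omega^\text{m}$, so as $\pmb{y}$ ranges over $\Omega^\text{m}$ the value $\pmb{\tilde a}(\pmb{y})=\pmb{\hat a}(\beta^{-1}(\pmb{y}))$ ranges over exactly the values $\pmb{\hat a}(\pmb{x})$, $\pmb{x}\in\Omega^\text{m}$; hence it suffices to show $\pmb{\hat a}(\pmb{x})\in\Sigma^d_{r/2,2R}(\Omega^\text{m})$ for every $\pmb{x}\in\Omega^\text{m}$. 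Symmetry of $\pmb{\hat a}(\pmb{x})$ is immediate from the symmetry of $\pmb{a}(\pmb{x})$ and the fact that $\det J(\pmb{x})>0$ (Corollary~\ref{cor:detJ}).

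Next I would write $\pmb{\hat a}(\pmb{x})=M(\pmb{x})\,\pmb{a}(\pmb{x})\,M(\pmb{x})^\mathrm{T}$ and, for a fixed $\xi\in\R^d$, put $\zeta=M(\pmb{x})^\mathrm{T}\xi$, so that $\xi^\mathrm{T}\pmb{\hat a}(\pmb{x})\xi=\zeta^\mathrm{T}\pmb{a}(\pmb{x})\zeta$, which by Assumption~\ref{ass:vardomEllip}(2) lies between $r\|\zeta\|_2^2$ and $R\|\zeta\|_2^2$. It then remains to compare $\|\zeta\|_2$ with $\|\xi\|_2$. Since $\delta_\beta\le\delta_0$, Proposition~\ref{prop:estiJx} gives $\|M(\pmb{x})-I\|_2\le\tfrac14$, hence $\|M(\pmb{x})^\mathrm{T}-I\|_2\le\tfrac14$, and the triangle inequality yields $\tfrac34\|\xi\|_2\le\|\zeta\|_2\le\tfrac54\|\xi\|_2$, i.e. $\tfrac{9}{16}\|\xi\|_2^2\le\|\zeta\|_2^2\le\tfrac{25}{16}\|\xi\|_2^2$.

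Putting these together gives $\tfrac{9}{16}r\|\xi\|_2^2\le\xi^\mathrm{T}\pmb{\hat a}(\pmb{x})\xi\le\tfrac{25}{16}R\|\xi\|_2^2$, and since $\tfrac{9}{16}>\tfrac12$ and $\tfrac{25}{16}<2$ this implies $\tfrac r2\|\xi\|_2^2\le\xi^\mathrm{T}\pmb{\hat a}(\pmb{x})\xi\le 2R\|\xi\|_2^2$, which is precisely the defining condition for $\pmb{\hat a}(\pmb{x})\in\Sigma^d_{r/2,2R}(\Omega^\text{m})$, and hence $\pmb{\tilde a}\in\Sigma^d_{r/2,2R}(\Omega^\text{m})$. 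There is no substantial obstacle here: the only nontrivial input is the estimate $\|M(\pmb{x})-I\|_2\le\tfrac14$, which is exactly Proposition~\ref{prop:estiJx} and the reason the threshold $\delta_0$ was chosen as it was. (One could alternatively bound the transformation directly via $\|J(\pmb{x})-I\|_2\le\delta_\beta$, but the $\sqrt{\det J}$-normalized form $M$ is the one naturally produced by Lemma~\ref{lem:eqn4tildeu}, so it is cleanest to work with it.)
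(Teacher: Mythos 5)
Your proof is correct and follows essentially the same route as the paper's: reduce to showing $\pmb{\hat a}\in\Sigma^d_{r/2,2R}(\Omega^\text{m})$, invoke Proposition~\ref{prop:estiJx} to get $\|M-I\|_2\le\tfrac14$, and bound the congruence by a triangle inequality on $\|M^\mathrm{T}\xi\|_2$, landing on the same constants $\tfrac{9}{16}$ and $\tfrac{25}{16}$. Your bookkeeping is in fact slightly more careful than the paper's on the transposition (you use $\zeta=M^\mathrm{T}\xi$, matching $\pmb{\hat a}=M\pmb{a}M^\mathrm{T}$ from Lemma~\ref{lem:eqn4tildeu}, whereas the paper writes $(B\xi)^\mathrm{T}\pmb{a}(B\xi)$), but since $\|M\|_2=\|M^\mathrm{T}\|_2$ this is immaterial.
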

\begin{proof}
	It suffices to prove $\pmb{\hat a}\in\Sigma_{r/2,2R}^d(\Omega^\text{m})$. 
	For $\pmb{x}\in\Omega^\text{m}$, we denote for simplicity $B=\frac{J(\pmb{x})}{\sqrt{\det J(\pmb{x})}}$. 
	Proposition~\ref{prop:estiJx} shows $\|B-I\|_2\le 1/4$. 
	Then
	\[\|B\xi\|_2\ge\|\xi\|_2-\|(B-I)\xi\|_2\ge\|\xi\|_2-\|B-I\|_2\|\xi\|_2\ge\frac34\|\xi\|_2,\] 
	and $\|B\xi\|_2^2\ge\frac9{16}\|\xi\|_2^2\ge\frac12\|\xi\|_2^2$. 
	Likewise, we have $\|B\xi\|_2^2\le(1+\frac14)^2\|\xi\|_2^2\le 2\|\xi\|_2^2$. 
	Since $a\in\Sigma_{r,R}^d(\Omega^\text{m})$, for any $\xi\in\R^d$,
	\begin{equation}\label{eq:vardomEllipUAE}
		\xi^\mathrm{T}\pmb{\hat a}(\pmb{x})\xi=(B\xi)^\mathrm{T}\pmb{a}(\pmb{x})B\xi
		\in[r\|B\xi\|_2^2,R\|B\xi\|_2^2]
		\subseteq\left[\frac r2\|\xi\|_2^2,2R\|\xi\|_2^2\right]
	\end{equation}
	holds, and the proposition is proved. 
\end{proof}

\begin{prop}\label{prop:da_tildea}
	If $\delta_\beta\le\delta_0$, we have
	\[ \|\pmb{a}-\pmb{\tilde a}\|_{L^\infty(\Omega^\text{m};\ell^2(\R^{d\times d}))}\le R(3C_J+1)\delta_\beta
	.\]
\end{prop}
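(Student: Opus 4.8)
The plan is to bound $\|\pmb{a}(\pmb{y})-\pmb{\tilde a}(\pmb{y})\|_2$ pointwise for every $\pmb{y}\in\Omega^\text{m}$ and then take the supremum. Fixing $\pmb{y}\in\Omega^\text{m}$, I write $\pmb{x}=\beta^{-1}(\pmb{y})\in\Omega^\text{m}$, so that $\pmb{\tilde a}(\pmb{y})=\pmb{\hat a}(\pmb{x})=B\pmb{a}(\pmb{x})B^\mathrm{T}$ with $B=J(\pmb{x})/\sqrt{\det J(\pmb{x})}$, and split
\[
\|\pmb{a}(\pmb{y})-B\pmb{a}(\pmb{x})B^\mathrm{T}\|_2\le\|\pmb{a}(\pmb{y})-\pmb{a}(\pmb{x})\|_2+\|\pmb{a}(\pmb{x})-B\pmb{a}(\pmb{x})B^\mathrm{T}\|_2.
\]
The first term measures the effect of evaluating the coefficient at the pulled-back point rather than at $\pmb{y}$, while the second measures the effect of the congruence transformation $\pmb{a}(\pmb{x})\mapsto B\pmb{a}(\pmb{x})B^\mathrm{T}$.

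For the first term I would invoke Assumption~\ref{ass:vardomEllip}(1) together with the definition of $\delta_\beta$: $\|\pmb{a}(\pmb{y})-\pmb{a}(\pmb{x})\|_2\le R\|\pmb{y}-\pmb{x}\|_2=R\|\beta(\pmb{x})-\pmb{x}\|_2\le R\delta_\beta$. For the second term I would set $E=B-I$, note $\|E\|_2\le C_J\delta_\beta\le\tfrac14$ by Proposition~\ref{prop:estiJx}, expand $B\pmb{a}(\pmb{x})B^\mathrm{T}=\pmb{a}(\pmb{x})+E\pmb{a}(\pmb{x})+\pmb{a}(\pmb{x})E^\mathrm{T}+E\pmb{a}(\pmb{x})E^\mathrm{T}$, and use submultiplicativity of the operator norm together with $\|\pmb{a}(\pmb{x})\|_2\le R$ (which holds because $\pmb{a}\in\Sigma_{r,R}^d(\Omega^\text{m})$ is symmetric with spectrum in $[r,R]$) to get $\|\pmb{a}(\pmb{x})-B\pmb{a}(\pmb{x})B^\mathrm{T}\|_2\le R(2\|E\|_2+\|E\|_2^2)$. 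Since $C_J\delta_\beta\le 1$ gives $\|E\|_2^2\le C_J\delta_\beta$, this is at most $3RC_J\delta_\beta$.

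Adding the two estimates yields $\|\pmb{a}(\pmb{y})-\pmb{\tilde a}(\pmb{y})\|_2\le R(3C_J+1)\delta_\beta$ for every $\pmb{y}\in\Omega^\text{m}$, and taking the supremum gives the claim. There is no real obstacle here beyond Proposition~\ref{prop:estiJx}, which is already available; the one point worth flagging is that the near-identity estimate for $B$ alone is not enough, since $\pmb{\tilde a}$ also shifts the evaluation point from $\pmb{y}$ to $\beta^{-1}(\pmb{y})$, and closing that gap is precisely where the Lipschitz continuity of $\pmb{a}$ (Assumption~\ref{ass:vardomEllip}(1)) enters.
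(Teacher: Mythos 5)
Your proof is correct and follows essentially the same route as the paper: it splits $\|\pmb{a}-\pmb{\tilde a}\|$ at the pulled-back point via the triangle inequality, handles the point-shift term with the Lipschitz hypothesis of Assumption~\ref{ass:vardomEllip}(1), and controls the congruence term using Proposition~\ref{prop:estiJx}. The only cosmetic difference is that you expand $B\pmb{a}B^\mathrm{T}-\pmb{a}$ in $E=B-I$ to get the three-term bound $R(2\|E\|_2+\|E\|_2^2)$, whereas the paper telescopes through $AB$ to get $\|B-I\|_2\|A\|_2(\|B\|_2+1)$; both land on the same intermediate estimate $3RC_J\delta_\beta$.
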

\begin{proof}
	For $\pmb{x}\in\Omega^\text{m}$, by Assumption~\ref{ass:vardomEllip}(1), 
	we have
	\[\begin{aligned}
		\|\pmb{a}(\pmb{x})-\pmb{a}(\beta^{-1}(\pmb{x}))\|_2
		&\le R\|\pmb{x}-\beta^{-1}(\pmb{x})\|_2
		\\&= R\|\beta(\beta^{-1}(\pmb{x}))-\beta^{-1}(\pmb{x})\|_2
		\\&\le R\max_{\pmb{y}\in\Omega^\text{m}}\|\beta(\pmb{y})-\pmb{y}\|_2
		\\&\le R\delta_\beta
	.\end{aligned}\]
	Denote for simplicity $B=\frac{J(\beta^{-1}(\pmb{x}))}{\sqrt{\det J(\beta^{-1}(\pmb{x}))}}$, and $A=\pmb{a}(\beta^{-1}(\pmb{x}))$. 
	Then $\pmb{\tilde a}(\pmb{x})=B^\mathrm{T}AB$, and
	\[\begin{split}
		\|\pmb{\tilde a}(\pmb{x})-\pmb{a}(\beta^{-1}(\pmb{x}))\|_2&=\|B^\mathrm{T}AB-A\|_2
		\\&\le\|B^\mathrm{T}AB-AB\|_2+\|AB-A\|_2
		\\&=\|(B-I)^\mathrm{T}AB\|_2+\|A(B-I)\|_2
		\\&\le\|B-I\|_2\|A\|_2\bigl(\|B\|_2+1\bigr)
	.\end{split}\]
	By Proposition~\ref{prop:estiJx} we have $\|B-I\|_2\le C_J\delta_\beta$, and
	\[ \|B\|_2\le\|B-I\|_2+\|I\|_2\le\frac14+1\le 2
	.\]
	Since $\pmb{a}\in\Sigma_{r,R}^d(\Omega^\text{m})$, the matrix $A$ is symmetric and positive definite, which indicates
	\[\|A\|_2=\lambda_{\max}(A)=\max_{\xi\in\R^d,\|\xi\|_2=1}\xi^\mathrm{T}A\xi\le R\cdot 1^2
	.\]
	The conclusion follows by
	\[\begin{split}
		\|\pmb{a}(\pmb{x})-\pmb{\tilde a}(\pmb{x})\|_2&\le\|\pmb{a}(\pmb{x})-\pmb{a}(\beta^{-1}(\pmb{x}))\|_2+\|\pmb{\tilde a}(\pmb{x})-\pmb{a}(\beta^{-1}(\pmb{x}))\|_2
		\\&\le R\delta_\beta+C_J\delta_\beta\cdot R\cdot(2+1)
		\\&=R(3C_J+1)\delta_\beta
	.\qedhere\end{split}\]
\end{proof}

We are now ready to prove the main theorem of Section~\ref{sec:ellip2domain}. 
\begin{proof}[Proof of Theorem~\ref{thm:vardomEllip}]
	We first estimate $\|u_2-\tilde u\|_{H_0^1(\Omega^\text{m}\setminus\partial\Omega_2)}$. 
	According to Lemma~\ref{lem:eqn4tildeu} and the definition of $u_2$, we have
	\[\begin{split}
		\int_{\Omega^\text{m}}(\nabla\tilde u(\pmb{x}))^\mathrm{T}\pmb{\tilde a}(\pmb{x})\nabla v(\pmb{x})\,\mathrm{d}\pmb{x}
		&=\int_{\Omega^\text{m}}\tilde f(\pmb{x})v(\pmb{x})\,\mathrm{d}\pmb{x}
		,\\\int_{\Omega^\text{m}}(\nabla u_2(\pmb{x}))^\mathrm{T}\pmb{a}(\pmb{x})\nabla v(\pmb{x})\,\mathrm{d}\pmb{x}
		&=\int_{\Omega^\text{m}}f(\pmb{x})v(\pmb{x})\,\mathrm{d}\pmb{x}
	\end{split}\]
	for any $v\in H_0^1(\Omega^\text{m}\setminus\partial\Omega_2)$, 
	and subtraction gives
	\[\begin{split}
		&\quad\int_{\Omega^\text{m}}(\nabla u_2(\pmb{x})-\nabla\tilde u(\pmb{x}))^\mathrm{T}\pmb{a}(\pmb{x})\nabla v(\pmb{x})\,\mathrm{d}\pmb{x}
		\\&=\int_{\Omega^\text{m}}(\nabla\tilde u(\pmb{x}))^\mathrm{T}(\pmb{\tilde a}(\pmb{x})-\pmb{a}(\pmb{x}))\nabla v(\pmb{x})\,\mathrm{d}\pmb{x}
		\\&\quad+\int_{\Omega^\text{m}}(f(\pmb{x})-\tilde f(\pmb{x}))v(\pmb{x})\,\mathrm{d}\pmb{x}
	.\end{split}\]
	Denote $w=u_2-\tilde u$, and take $v=w$, inequalities analogous to~\eqref{eq:n6kf0t} gives
	\[\begin{split}
		&\quad\int_{\Omega^\text{m}}(\nabla w(\pmb{x}))^\mathrm{T}\pmb{a}(\pmb{x})\nabla w(\pmb{x})\,\mathrm{d}\pmb{x}
		\\&\le\|\pmb{\tilde a}-\pmb{a}\|_{L^\infty(\Omega^\text{m};\ell^2(\R^{d\times d}))}\|\tilde u\|_{H_0^1(\Omega^\text{m})}\|w\|_{H_0^1(\Omega^\text{m})}
		\\&\quad+\|f-\tilde f\|_{H^{-1}(\Omega^\text{m})}\|w\|_{H_0^1(\Omega^\text{m})}
	.\end{split}\]
	Assumption~\ref{ass:vardomEllip}(2) guarantees
	\[\int_{\Omega^\text{m}}(\nabla w(\pmb{x}))^\mathrm{T}\pmb{a}(\pmb{x})\nabla w(\pmb{x})\,\mathrm{d}\pmb{x}
		\ge\int_{\Omega^\text{m}}r(\nabla w(\pmb{x}))^\mathrm{T}\nabla w(\pmb{x})\,\mathrm{d}\pmb{x}
	=r\|w\|_{H_0^1(\Omega^\text{m})}^2 ,\]
	and thus
	\[\begin{split}
		\|w\|_{H_0^1(\Omega^\text{m})}&\le\frac1r\left(
		\|\pmb{\tilde a}-\pmb{a}\|_{L^\infty(\Omega^\text{m};\ell^2(\R^{d\times d}))}\|\tilde u\|_{H_0^1(\Omega^\text{m})}
		+\|f-\tilde f\|_{H^{-1}(\Omega^\text{m})}\right)
		\\&\le\frac1r\left(R(3C_J+1)\delta_\beta\cdot\frac{2C_\text{P}R}{r}+2R\delta_\beta\right)
		\\&=\frac{2R}{r^2}((3C_J+1)RC_\text{P}+r)\delta_\beta
	,\end{split}\]
	where we have used Proposition~\ref{prop:da_tildea}, \ref{prop:est_tildeuH01} and~\ref{prop:df_tildef}. 
	Combining with Corollary~\ref{cor:du_tildeu} gives
	\[\begin{split}
		\|u-u_2\|_{L^2(\Omega^\text{m})}&\le\|u-\tilde u\|_{L^2(\Omega^\text{m})}+\|\tilde u-u_2\|_{L^2(\Omega^\text{m})}
		\\&\le\|u-\tilde u\|_{L^2(\Omega^\text{m})}+C_\text{P}\|u_2-\tilde u\|_{H_0^1(\Omega^\text{m})}
		\\&\le\frac{2RC_\text{P}}{r^2}(3r+(3C_J+1)RC_\text{P})\delta_\beta
	.\qedhere\end{split}\]
\end{proof}

\subsubsection{Proofs for the Fixed Parameter Case}
\begin{proof}[Proof of Proposition~\ref{prop:eVDtransLip}]
	Given $\lambda,\bar\lambda\in\Lambda$, $\lambda\ne\bar\lambda$, the aim is to show $\|u^{\lambda}-u^{\bar\lambda}\|_{L^2(\Omega^\text{m})}\le C\|\lambda-\bar\lambda\|_2$. 
	We devide the interval $[0,1]$ into $K$ segments with endpoints $0=t_0<t_1<\cdots<t_K=1$, satisfying $2\|\lambda-\bar\lambda\|_2(t_k-t_{k-1})\le\delta_0$ for $k=1,\dots,K$. 
	Denote for simplicity $\Omega_k=\Omega_{\lambda+t_k(\bar\lambda-\lambda)}$, $T_k=T_{\lambda+t_k(\bar\lambda-\lambda)}$, $u_k=u^{\lambda+t_k(\bar\lambda-\lambda)}$ for $k=1,\dots,K$. 
	Given $k$, we take $\beta=T_k\circ T_{k-1}^{-1}$. Then $\beta$ is a $C^1$-diffeomorphism from $\bar\Omega^\text{m}$ onto itself, $\Omega_k=\beta(\Omega_{k-1})$. 
	For any $\pmb{x}\in\bar\Omega^\text{m}$, denote $\pmb{y}=T_{k-1}^{-1}(\pmb{x})$, we have
	\[\|\beta(\pmb{x})-\pmb{x}\|_2=\|T_k(\pmb{y})-T_{k-1}(\pmb{y})\|_2\le(t_k-t_{k-1})\|\lambda-\bar\lambda\|_2 \]
	by Assumption~\ref{ass:vardomTransforms}(3), and
	\[\begin{split}
		\left\lVert\frac{\partial \beta(\pmb{x})}{\partial \pmb{x}}-I_d\right\rVert_2
		&=\left\lVert \frac{\partial T_k(\pmb{y})}{\partial \pmb{y}}\left(\frac{\partial T_{k-1}(\pmb{y})}{\partial \pmb{y}}\right)^{-1}-I_d\right\rVert_2
		\\&\le\left\lVert \frac{\partial T_k(\pmb{y})}{\partial \pmb{y}}-\frac{\partial T_{k-1}(\pmb{y})}{\partial \pmb{y}}\right\rVert_2\cdot \left\lVert \left(\frac{\partial T_{k-1}(\pmb{y})}{\partial \pmb{y}}\right)^{-1}\right\rVert_2
		\\&=\sigma_{\min}\left(\frac{\partial T_{k-1}(\pmb{y})}{\partial \pmb{y}}\right)^{-1} \left\lVert \frac{\partial T_k(\pmb{y})}{\partial \pmb{y}}-\frac{\partial T_{k-1}(\pmb{y})}{\partial \pmb{y}}\right\rVert_2
		\\&\le 2\left\lVert \frac{\partial T_k(\pmb{y})}{\partial \pmb{y}}-\frac{\partial T_{k-1}(\pmb{y})}{\partial \pmb{y}}\right\rVert_2
		\\&\le 2(t_k-t_{k-1})\|\lambda-\bar\lambda\|_2
	,\end{split}\]
	where we have used (2), (3) of Assumption~\ref{ass:vardomTransforms}. 
	This assures $\delta_\beta\le 2(t_k-t_{k-1})\|\lambda-\bar\lambda\|_2\le\delta_0$, and Theorem~\ref{thm:vardomEllip} asserts
	\[\|u_{k-1}-u_k\|_{L^2(\Omega^\text{m})}\le C\delta_\beta/2\le C(t_k-t_{k-1})\|\lambda-\bar\lambda\|_2.\] 
	The conclusion follows by
	\[\begin{split}
		\|u^{\lambda}-u^{\bar\lambda}\|_{L^2(\Omega^\text{m})}&\le\sum_{k=1}^{K}\|u_{k-1}-u_k\|_{L^2(\Omega^\text{m})}
		\\&\le\sum_{k=1}^{K}C(t_k-t_{k-1})\|\lambda-\bar\lambda\|_2
		\\&=C\|\lambda-\bar\lambda\|_2
	.\qedhere\end{split}\]
\end{proof}
\begin{proof}[Proof of the assertion in Example~\ref{eg:eVDmovingDisk}]
	For this specific setting, we may take the parametrized domain transformation to be the rotation
	\[T_\lambda:\pmb{x}\mapsto R_\lambda \pmb{x}=
		\begin{bmatrix} \cos\lambda&-\sin\lambda\\\sin\lambda&\cos\lambda \end{bmatrix}
		\begin{bmatrix}x_1\\x_2\end{bmatrix}
	, \]
	which satisfies $\frac{\partial T_\lambda(\pmb{x})}{\partial \pmb{x}}=R_\lambda$ for any $\pmb{x}\in\bar\Omega^\text{m}$. 
	Then Assumption~\ref{ass:vardomTransforms} is satisfied, since we have $\sigma_{\min}(R_\lambda)=1$, $\|T_{\lambda}(\pmb{x})-T_{\bar\lambda}(\pmb{x})\|_2=
	\|R_{\lambda}\pmb{x}-R_{\bar\lambda}\pmb{x}\|_2=2\sin(\frac{|\lambda-\bar\lambda|}2)\|\pmb{x}\|_2\le |\lambda-\bar\lambda|$, 
	and
	\[\left\lVert \frac{\partial T_{\lambda}(\pmb{x})}{\partial \pmb{x}}-\frac{\partial T_{\bar\lambda}(\pmb{x})}{\partial \pmb{x}}\right\rVert_2
		=\|R_{\lambda}-R_{\bar\lambda}\|_2
		=\max_{\|\pmb{x}\|_2\le 1}\|R_{\lambda}\pmb{x}-R_{\bar\lambda}\pmb{x}\|_2
		\le|\lambda-\bar\lambda|
	.\]
	Theorem~\ref{thm:ellipVarDomDecWidthLip} then implies
	\[d_{1,C\pi}^\text{Deco}(\{u^\lambda|\lambda\in\Lambda\})\le Cd_{1,\pi}^\text{rDeco}(\Lambda) .\]
	By taking the decoder mapping to be $D=D_1\circ P_{[-1,1]}:Z=\R^1\to\Lambda$, where $D_1:Z_B=[-1,1]\to\Lambda$ is given by $z\mapsto\pi z$, we see that $D(Z_B)=\Lambda$, and $d_{1,\pi}^\text{rDeco}(\Lambda)=0$. 
	The assertion is proved. 
\end{proof}
\begin{proof}[Proof of the assertion in Example~\ref{eg:eVDmovingHole}]
	The proof is the same as Example~\ref{eg:eVDmovingDisk}. 
\end{proof}
\begin{proof}[Proof of the assertion in Example~\ref{eg:eVDdefmHole}]
	The domain transformations $T_\lambda:\bar\Omega^\text{m}\to\bar\Omega^\text{m}$ can be constructed as
	\[T_\lambda(\pmb{x})=(x_1+\lambda_1\sin x_1,x_2+\lambda_2\sin x_2),\quad \pmb{x}=(x_1,x_2)\in\bar\Omega^\text{m} ,\]
	which satisfies
	\[
		\frac{\partial T_\lambda(\pmb{x})}{\partial \pmb{x}}=\mathrm{diag}(1+\lambda_1\cos x_1,1+\lambda_2\cos x_2)
	,\]
	and it is easy to check that Assumption~\ref{ass:vardomTransforms} holds. 
	Theorem~\ref{thm:ellipVarDomDecWidthLip} implies
	\[d_{2,C/\sqrt 2}^\text{Deco}(\{u^\lambda|\lambda\in\Lambda\})\le Cd_{2,1/\sqrt 2}^\text{rDeco}(\Lambda) .\]
	We take $Z=\R^2$, and $Z_\Lambda=[-1/\sqrt 2,1/\sqrt 2]^2$ which is a convex subset of $Z_B$. 
	The mapping $D_1:Z\to\R^2,z\mapsto z/\sqrt 2$ satisfies $D_1(Z_\Lambda)=\Lambda$. 
	By taking the decoder mapping to be $D=D_1\circ P_{Z_\Lambda}$, we conclude that $d_{2,1/\sqrt 2}^\text{rDeco}(\Lambda)=0$, and hence $d_{2,C/\sqrt 2}^\text{Deco}(\{u^\lambda|\lambda\in\Lambda\})=0$. 
\end{proof}
\begin{proof}[Proof of the assertion in Example~\ref{eg:eVDctvc}]
	In this setting, the domain transformation can be taken as
	$T_\lambda(\pmb{x})=(x_1,x_2+\rho_\lambda(x_1)\sin x_2)$ for $\pmb{x}\in\bar\Omega^\text{m}$, with Jacobian matrix
	\[\frac{\partial T_\lambda(\pmb{x})}{\partial \pmb{x}}=
		\begin{bmatrix} 1\\\rho_\lambda'(x_1)\sin x_2&1+\rho_\lambda(x_1)\cos x_2 \end{bmatrix} .\]
	Validation of Assumption~\ref{ass:vardomTransforms}(1) is straightforward. 
	As for (2) and (3), we note that
	\[\begin{split}
		\rho_\lambda'(x_1)^2&=\Bigl(\sum_{k=1}^{\infty}\frac{\sqrt 6}{k\pi}(-a_k\sin kx_1+b_k\cos kx_1)\Bigr)^2
		\\&\le\Bigl(\sum_{k=1}^{\infty}\frac{6}{k^2\pi^2}\Bigr)\Bigl(\sum_{k=1}^{\infty}(-a_k\sin kx_1+b_k\cos kx_1)^2\Bigr)
		\\&\qquad\text{(by Cauchy's inequality)}
		\\&=\sum_{k=1}^{\infty}|-a_k\sin kx_1+b_k\cos kx_1|^2
		\\&\le\sum_{k=1}^{\infty}\left(\sqrt{a_k^2+b_k^2}\right)^2
		=\|\lambda\|^2
	,\end{split}\]
	and similarly $|\rho_\lambda(x_1)|\le\|\lambda\|$ holds as well. 
	Then
	\[T_\lambda(\pmb{x})-T_{\bar\lambda}(\pmb{x})=(0,(\rho_\lambda(x_1)-\rho_{\bar\lambda}(x_1))\sin x_2)
		=(0,\rho_{\lambda-\bar\lambda}(x_1)\sin x_2) ,\]
	\[\|T_\lambda(\pmb{x})-T_{\bar\lambda}(\pmb{x})\|_2\le|\rho_{\lambda-\bar\lambda}(x_1)|\cdot|\sin x_2|
		\le\|\lambda-\bar\lambda\| ,\]
	and
	\[ \frac{\partial T_{\lambda}(\pmb{x})}{\partial \pmb{x}}-\frac{\partial T_{\bar\lambda}(\pmb{x})}{\partial \pmb{x}}
		=\begin{bmatrix} 0&0\\\rho_{\lambda-\bar\lambda}'(x_1)\sin x_2&\rho_{\lambda-\bar\lambda}(x_1)\cos x_2 \end{bmatrix} ,\]
	\[\begin{split}
		\left\lVert \frac{\partial T_{\lambda}(\pmb{x})}{\partial \pmb{x}}-\frac{\partial T_{\bar\lambda}(\pmb{x})}{\partial \pmb{x}}\right\rVert_2^2
		&=\bigl(\rho_{\lambda-\bar\lambda}'(x_1)\sin x_2\bigr)^2+\bigl(\rho_{\lambda-\bar\lambda}(x_1)\cos x_2\bigr)^2
		\\&\le\|\lambda-\bar\lambda\|^2(\sin^2x_2+\cos^2x_2)
		\\&=\|\lambda-\bar\lambda\|^2
	,\end{split}\]
	indicating that Assumption~\ref{ass:vardomTransforms}(3) holds. 
	Furthermore, note that $w_k\le w_1=1$ implies
	\[
		\|\lambda\|^2=\sum_{k=1}^{\infty}(a_k^2+b_k^2)
		\le\sum_{k=1}^{\infty}\frac{a_k^2+b_k^2}{w_k}
		\le(3/8)^2
	.\]
	Given $p,q\in\R$, 
	\[\begin{split}
		\sigma_{\min}([\begin{smallmatrix} 1\\q&p \end{smallmatrix}])^2
		&=\lambda_{\min}([\begin{smallmatrix} 1\\q&p \end{smallmatrix}][\begin{smallmatrix} 1&q\\&p \end{smallmatrix}])
		=\lambda_{\min}([\begin{smallmatrix} 1&q\\q&p^2+q^2 \end{smallmatrix}])
		\\&=\frac12\bigl(p^2+q^2+1-\sqrt{(p^2+q^2+1)^2-4p^2}\bigr)
	,\end{split}\]
	and $\sigma_{\min}([\begin{smallmatrix} 1\\q&p \end{smallmatrix}])\ge 1/2$
	if and only if $p^2\ge q^2/3+1/4$. 
	For any $x\in\bar\Omega^\text{m}$, we let $p=1+\rho_\lambda(x_1)\cos x_2$, $q=\rho_\lambda'(x_1)\sin x_2$. 
	Then
	\[p\ge 1-|\rho_\lambda(x_1)|\ge 1-\|\lambda\|\ge 5/8,\]
	\[|q|\le|\rho_\lambda'(x_1)|\le\|\lambda\|\le 3/8,\]
	and $p^2\ge q^2/3+1/4$ indeed holds. 
	This validates Assumption~\ref{ass:vardomTransforms}(2) as $\sigma_{\min}(\frac{\partial T_\lambda(x)}{\partial x})=\sigma_{\min}([\begin{smallmatrix} 1\\q&p \end{smallmatrix}])\ge 1/2$. 

	Theorem~\ref{thm:ellipVarDomDecWidthLip} then implies
	\[d_{2n,3C/8}^\text{Deco}(\{u^\lambda|\lambda\in\Lambda\})\le Cd_{2n,3/8}^\text{rDeco}(\Lambda) .\]
	To estimate the latter term, we take $Z=\R^{2n}$, and let $D_1:z\mapsto\lambda=(a_1,b_1,a_2,\dots)$ be given as $a_k=3z_{2k-1}/8$, $b_k=3z_{2k}/8$ for $k=1,\dots,n$, $a_k=b_k=0$ for $k>n$. 
	Then the mapping $D_1$ is $3/8$-Lipschitz. 
	Each $\lambda=(a_1,b_1,a_2,\dots)\in\Lambda$ can be associated with a vector $z$ such that $z_{2k-1}=8a_k/3$, $z_{2k}=8b_k/3$, $k=1,\dots,n$. 
	Then $z\in Z_B$ since $\|z\|\le 8\|\lambda\|/3\le 1$, and
	\[\begin{split}
		\lambda-D_1(z)&=(0,\dots,0,a_{n+1},b_{n+1},a_{n+2},\dots),
		\\\|\lambda-D_1(z)\|^2&=\sum_{k>n}(a_k^2+b_k^2)
		\le w_n\sum_{k>n}\frac{a_k^2+b_k^2}{w_k}
		\\&\le w_n\sum_{k=1}^\infty\frac{a_k^2+b_k^2}{w_k}
		\le \frac{9w_n}{64}
	.\end{split}\]
	Now $Z_\Lambda=D_1^{-1}(\Lambda)$ is a convex subset of $Z_B$. 
	Taking $D=D_1\circ P_{Z_\Lambda}$, we have
	\[\sup_{\lambda\in\Lambda}\inf_{z\in Z_B}\|\lambda-D(z)\|\le 3\sqrt{w_n}/8,\]
	and $D(Z)\subset\Lambda$ holds. 
	This indicates $d_{2n,3/8}^\text{rDeco}(\Lambda)\le 3\sqrt{w_n}/8$ by the definition of the restricted decoder width, and we conclude that $d_{2n,3C/8}^\text{Deco}(\{u^\lambda|\lambda\in\Lambda\})\le 3C\sqrt{w_n}/8$. 
\end{proof}

\subsubsection{Proofs for the Variable Parameter Case}
\begin{proof}[Proof of Proposition~\ref{prop:eVDPLip}]
	By Proposition~\ref{prop:ellpLip}, we have 
	\[\begin{split}
		\|u^{(\pmb{a},f,\bar\lambda)}-u^{(\pmb{\bar a},\bar f,\bar\lambda)}\|_{L^2(\Omega^\text{m})}
		&\le C_\text{P}\|u^{(\pmb{a},f,\bar\lambda)}-u^{(\pmb{\bar a},\bar f,\bar\lambda)}\|_{H_0^1(\Omega^\text{m})}
		\\&\le \frac{C_\text{P}^2}{r^2}\Bigl(
			R\|\pmb{\bar a}-\pmb{a}\|_{L^\infty(\Omega^\text{m};\ell^2(\R^{d\times d}))}
			+r\|\bar f-f\|_{L^2} \Bigr)
		\\&\le C_1\Bigl(
			\|\pmb{\bar a}-\pmb{a}\|_{L^\infty(\Omega^\text{m};\ell^2(\R^{d\times d}))}
			+\|\bar f-f\|_{L^2} \Bigr)
	.\end{split}\]
	By Proposition~\ref{prop:eVDtransLip}, we have 
	\[\begin{split}
		\|u^{(\pmb{a},f,\lambda)}-u^{(\pmb{a},f,\bar\lambda)}\|_{L^2(\Omega^\text{m})}
		\le C\|\lambda-\bar\lambda\|_2
		\le C_1\|\lambda-\bar\lambda\|_2
	.\end{split}\]
	A direct combination of these two inequalities shall give
	\[\begin{split}
		\|u^{(\pmb{a},f,\lambda)}-u^{(\pmb{\bar a},\bar f,\bar\lambda)}\|_{L^2(\Omega^\text{m})}
		&\le\|u^{(\pmb{a},f,\lambda)}-u^{(\pmb{a},f,\bar\lambda)}\|_{L^2(\Omega^\text{m})}
			+\|u^{(\pmb{a},f,\bar\lambda)}-u^{(\pmb{\bar a},\bar f,\bar\lambda)}\|_{L^2(\Omega^\text{m})}
		\\&\le C_1d_{\mathcal{A}_0}((\pmb{a},f,\lambda),(\pmb{\bar a},\bar f,\bar\lambda))
	.\qedhere\end{split}\]
\end{proof}
\begin{proof}[Proof of the assertion in Example~\ref{eg:eVPD}]
	The proof is similar to that of Example~\ref{eg:parabEg1}. 
	The PDE coefficients are now parametrized by the mapping $S_\Lambda:\Lambda_\mu\times\Lambda\to\mathcal{A}_0$, $(\mu,\lambda)\mapsto(\pmb{a},f,\lambda)$. 
	In assumption (c), the condition $\sum_{k=1}^{K}(\mu_k^a)^2\le\frac14$ implies $|\mu_k^a|\le\frac12$ for $k=1,\dots,K$. 
	By assumption (a), it is easy to see that all possible $\pmb{a}$'s are 2-Lipschitz continuous, and $\pmb{a}\in\Sigma_{1,2}^d(\Omega^\text{m})$, thereby $\pmb{a}\in \mathsf{D}_{1,2}^a$. 
	For a different set of coefficients $(\pmb{\bar a},\bar f,\bar\lambda)=S_\Lambda(\bar\mu,\bar\lambda)$, an argument similar to~\eqref{eq:parabDiff_b} shows
	\[\begin{split}
		\|\pmb{a}-\pmb{\bar a}\|_{L^\infty(\Omega^\text{m};\ell^2(\R^d\times\R^d))}
		&=\max_{1\le k\le K}\left\lVert(\mu_k^a-\bar\mu_k^a)\phi_k^a\right\rVert_{L^\infty(\Omega^\text{m})}
		\\&\le\max_{1\le k\le K}\|\mu-\bar\mu\|_2\cdot\left\lVert\phi_k^a\right\rVert_{L^\infty(\Omega^\text{m})}
		\\&\le\|\mu-\bar\mu\|_2
	.\end{split}\]
	As for $f$, we have $f\in \mathsf{D}_2^f$ since
	$\|f\|_{L^2}^2\le\left(\sum_{k=1}^{K}(\mu_k^f)^2\right)\left(\sum_{k=1}^{K}\|\phi_k^f\|_{L^2}^2\right)\le\frac14\cdot 1\le 4$ 
	(similar to~\eqref{eq:parabEg1FnormEst}), and
	\[ \|f-\bar f\|_{L^2}^2\le\left(\sum_{k=1}^{K}(\mu_k^f-\bar\mu_k^f)^2\right)\left(\sum_{k=1}^{K}\|\phi_k^f\|_{L^2}^2\right)\le\|\mu-\bar\mu\|_2^2\cdot 1 .\]
	Viewing $(\mu,\lambda)$ as a vector in $\R^{2K+2}$, we apply the definition of the metric on $\mathcal{A}_0$ (as given in~\eqref{eq:eVDPdefA0metric}) to get
	\[d_{\mathcal{A}_0}((\pmb{a},f,\lambda),(\pmb{\bar a},\bar f,\bar\lambda)) \le
		\|\bar\mu-\mu\|_2
		+\|\bar\mu-\mu\|_2
		+\|\bar\lambda-\lambda\|_2
		\le 3\|(\bar\mu,\bar\lambda)-(\mu,\lambda)\|_2
		,\]
	and the mapping $S_\Lambda:(\mu,\lambda)\mapsto(\pmb{a},f,\lambda)$ is therefore 3-Lipschitz continuous. 
	Furthermore, assumption (c) as well as the definition of $\Lambda$ implies
	\[\|(\mu,\lambda)\|_2^2=\sum_{k=1}^{K}(\mu_k^a)^2+\sum_{k=1}^{K}(\mu_k^f)^2+(\lambda_1^2+\lambda_2^2)\le\frac14+\frac14+\left(\frac14+\frac14\right)=1 \]
	for any $(\mu,\lambda)\in\Lambda_\mu\times\Lambda$, and hence $d_{2K+2,1}^\text{rDeco}(\Lambda_\mu\times\Lambda)=0$ since we can take $Z=\R^{2K+2}$, $D=P_{\Lambda_\mu\times\Lambda}$. 
	Proposition~\ref{prop:decWidthLip} then implies $d_{2K+2,3}^\text{Deco}(S_\Lambda(\Lambda_\mu\times\Lambda))=0$, and we may use Theorem~\ref{thm:eVDPwidthFromA} to conclude the proof
	(note that $\{u^{\mu,\lambda}|\mu\in\Lambda_\mu,\lambda\in\Lambda\}=S_0(S_\Lambda(\Lambda_\mu\times\Lambda))$ in this example). 
\end{proof}

\subsection{Advection Equations on a Fixed Domain}
\begin{proof}[Proof of Proposition~\ref{prop:advEg1Holder}]
	We have
	\[\begin{split}
		\left\|u-\bar u\right\|_{L^p}^p &=\int_{-1}^1\mathrm{d}x\int_0^1\left|u(t,x)-\bar u(t,x)\right|^p\,\mathrm{d}t
		\\&=\int_{-1}^1\mathrm{d}x\int_0^1\left|\chi_{\{t\le\int_0^x\frac{\mathrm{d}s}{c(s)}\}}-\chi_{\{t\le\int_0^x\frac{\mathrm{d}s}{\bar c(s)}\}}\right|^p\,\mathrm{d}t
		\\&\le\int_{-1}^1\mathrm{d}x\int_{-\infty}^{+\infty}\left|\chi_{\{t\le\int_0^x\frac{\mathrm{d}s}{c(s)}\}}-\chi_{\{t\le\int_0^x\frac{\mathrm{d}s}{\bar c(s)}\}}\right|^p\,\mathrm{d}t
		\\&=\int_0^1\mathrm{d}x\int_{-\infty}^{+\infty}\left|\chi_{\{t\le\int_0^x\frac{\mathrm{d}s}{c(s)}\}}-\chi_{\{t\le\int_0^x\frac{\mathrm{d}s}{\bar c(s)}\}}\right|\,\mathrm{d}t
		\\&=\int_0^1\left|\int_0^x\frac{\mathrm{d}s}{c(s)}-\int_0^x\frac{\mathrm{d}s}{\bar c(s)}\right|\,\mathrm{d}x
		\\&\le\int_0^1\mathrm{d}x\int_0^x\left|\frac{1}{c(s)}-\frac{1}{\bar c(s)}\right|\mathrm{d}s
		\\&\le\int_0^1\mathrm{d}x\int_{-1}^1\left|\frac{1}{c(s)}-\frac{1}{\bar c(s)}\right|\mathrm{d}s
		\\&\le\int_{-1}^1\frac{|\bar c(s)-c(s)|}{r^2}\mathrm{d}s
		\\&=r^{-2}\|c-\bar c\|_{L^1([-1,1])}
	.\qedhere\end{split}\]
\end{proof}
\subsubsection{The Case of Using $L^1$-norm}
\begin{lem}\label{lem:decW_GRF}
	Assume $\mathcal{V}$ is a Banach space. 
	Given $\phi_0,\dots,\phi_K\in\mathcal{V}$, 
	we let
	\[\mathcal{K}_\mathcal{V}=\left\{\phi_0+\sum_{k=1}^{K}\lambda_k\phi_k\ \middle|\ \lambda\in[-1,1]^K\right\}\subset\mathcal{V}.\]
	Then
	$d_{K,l}^\text{rDeco}(\mathcal{K}_\mathcal{V})=0$
	holds, where $l=\sqrt{K\sum_{k=1}^{K}\mu_k^2}$\ , $\mu_k=\|\phi_k\|_\mathcal{V}>0$. 
\end{lem}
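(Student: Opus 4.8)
The plan is to produce a single $l$-Lipschitz decoder $D:Z\to\mathcal{K}_\mathcal{V}$ whose image already contains all of $\mathcal{K}_\mathcal{V}$ when restricted to the unit ball $Z_B$; this immediately forces $\sup_{u\in\mathcal{K}_\mathcal{V}}\inf_{\pmb{z}\in Z_B}\|u-D(\pmb{z})\|_\mathcal{V}=0$, and taking the infimum over admissible decoders gives $d_{K,l}^\text{rDeco}(\mathcal{K}_\mathcal{V})=0$. Note first that $\mathcal{K}_\mathcal{V}$ is compact, being the continuous image of the cube $[-1,1]^K$ under the affine map $\lambda\mapsto\phi_0+\sum_{k=1}^K\lambda_k\phi_k$. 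I would take $Z=\R^K$ and $Z_\Lambda=[-1/\sqrt K,1/\sqrt K]^K$, and observe that $Z_\Lambda\subseteq Z_B$: any $z\in Z_\Lambda$ satisfies $\|z\|_2^2=\sum_{k=1}^Kz_k^2\le K\cdot(1/\sqrt K)^2=1$.

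Next I would define $D_1:Z\to\mathcal{V}$ by $D_1(z)=\phi_0+\sqrt K\sum_{k=1}^Kz_k\phi_k$. Writing $\lambda_k=\sqrt Kz_k$ shows that $z$ ranges over $Z_\Lambda$ exactly when $\lambda=(\lambda_1,\dots,\lambda_K)$ ranges over $[-1,1]^K$, hence $D_1(Z_\Lambda)=\mathcal{K}_\mathcal{V}$. For the Lipschitz bound, the triangle inequality together with the Cauchy--Schwarz inequality gives, for any $z,\bar z\in Z$,
\[
\|D_1(z)-D_1(\bar z)\|_\mathcal{V}
=\sqrt K\,\Bigl\|\sum_{k=1}^K(z_k-\bar z_k)\phi_k\Bigr\|_\mathcal{V}
\le\sqrt K\sum_{k=1}^K|z_k-\bar z_k|\,\mu_k
\le\sqrt{K\sum_{k=1}^K\mu_k^2}\;\|z-\bar z\|_2 ,
\]
so $D_1$ is $l$-Lipschitz continuous with $l=\sqrt{K\sum_{k=1}^K\mu_k^2}$.

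Finally I would set $D=D_1\circ P_{Z_\Lambda}$, where $P_{Z_\Lambda}:Z\to Z_\Lambda$ is the $1$-Lipschitz metric projection onto the convex compact set $Z_\Lambda$ given by~\eqref{eq:metricProjPK}; then $D$ is again $l$-Lipschitz. Since $P_{Z_\Lambda}$ maps $Z$ onto $Z_\Lambda$ and restricts to the identity on $Z_\Lambda\subseteq Z_B$, we get $D(Z)=D_1(Z_\Lambda)=\mathcal{K}_\mathcal{V}$, so the image of $D$ lies in $\mathcal{K}_\mathcal{V}$ as required for the restricted decoder width, and moreover $D(Z_B)\supseteq D(Z_\Lambda)=\mathcal{K}_\mathcal{V}$. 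Thus every $u\in\mathcal{K}_\mathcal{V}$ equals $D(z)$ for some $z\in Z_B$, whence $\inf_{\pmb{z}\in Z_B}\|u-D(\pmb{z})\|_\mathcal{V}=0$; taking the supremum over $u\in\mathcal{K}_\mathcal{V}$ and the infimum over all admissible decoders yields $d_{K,l}^\text{rDeco}(\mathcal{K}_\mathcal{V})=0$. The only point that needs care is the scaling: the factor $1/\sqrt K$ is exactly what makes the parameter cube fit inside the unit ball $Z_B$, and the compensating factor $\sqrt K$ in $D_1$ is precisely what turns the naive Lipschitz constant $\sqrt{\sum_k\mu_k^2}$ into the claimed $l=\sqrt{K\sum_k\mu_k^2}$; beyond this bookkeeping there is no essential obstacle.
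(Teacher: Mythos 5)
Your construction coincides with the paper's: the same rescaled cube $Z_\Lambda=[-1/\sqrt K,1/\sqrt K]^K\subseteq Z_B$, the same map $D_1(z)=\phi_0+\sqrt K\sum_k z_k\phi_k$ with the Cauchy--Schwarz Lipschitz bound, and the same composition $D=D_1\circ P_{Z_\Lambda}$ with the metric projection. The argument is correct and essentially identical to the paper's proof.
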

\begin{proof}
	The proof has much in common with that of Example~\ref{eg:ellipEg1}. 
	For any $v=\phi_0+\sum_{k=1}^{K}\lambda_k\phi_k\in \mathcal{K}_\mathcal{V}$, $\bar v=\phi_0+\sum_{k=1}^{K}\bar\lambda_k\phi_k\in \mathcal{K}_\mathcal{V}$, we have
	\begin{equation}\label{eq:lemDecW_GRF_D1Lip}\begin{split}
		\|v-\bar v\|_{\mathcal{V}}
		&=\left\lVert \sum_{k=1}^{K}(\lambda_k-\bar\lambda_k)\phi_k\right\rVert_{\mathcal{V}}
		\le\sum_{k=1}^{K}|\lambda_k-\bar\lambda_k|\cdot\|\phi_k\|_{\mathcal{V}}
		\\&=\sum_{k=1}^{K}|\lambda_k-\bar\lambda_k|\mu_k
		\le\sqrt{\sum_{k=1}^{K}|\lambda_k-\bar\lambda_k|^2}\sqrt{\sum_{k=1}^{K}\mu_k^2}
		\\&=\|\lambda-\bar\lambda\|_2\sqrt{\sum_{k=1}^{K}\mu_k^2}
	.\end{split}\end{equation}
	We consider the mapping
	\[D_1:Z=\R^K\to\mathcal{V},\quad z\mapsto v=\phi_0+\sqrt K\sum_{k=1}^{K}z_k\phi_k .\]
	By~\eqref{eq:lemDecW_GRF_D1Lip}, $D_1$ is $\sqrt{K\sum_{k=1}^{K}\mu_k^2}$-Lipschitz continuous. 
	Now $Z_1=[-1/\sqrt K,1/\sqrt K]^K$ is a convex subset of $Z_B$, and $D_1(Z_1)=\mathcal{K}_\mathcal{V}$ holds. 
	We take $D=D_1\circ P_{Z_1}$, whose Lipschitz constant is also $l=\sqrt{K\sum_{k=1}^{K}\mu_k^2}$. 
	The lemma then follows directly by the definition of the restricted decoder width. 
\end{proof}
\begin{proof}[Proof of the assertion in Example~\ref{eg:advEg1_L1}]
	Denote $\mathcal{A}=\{c^\lambda\mid\lambda\in\Lambda\}$. 
	According to Lemma~\ref{lem:decW_GRF} and the definition of $c^\lambda$, we see that 
	\[d_{K,l}^\text{rDeco}(\mathcal{A})_{L^1([-1,1])}=0\]
	holds with
	$l=\sqrt{K\sum_{k=1}^{K}\|\phi_k\|_{L^1([-1,1])}^2}$. 
	We may apply Theorem~\ref{thm:advL1DecKvsA} to conclude that
	$d_{K,l/r^2}^\text{Deco}\left(\left\{u^\lambda\ \middle|\ \lambda\in\Lambda\right\}\right)_{L^1}=0.$
\end{proof}
\subsubsection{The Case of Using $L^2$-norm}
Similar to Proposition~\ref{prop:decWidthLip}, we have the following lemma for the entropy number:
\begin{lem}\label{lem:ent_Holder}
	Let $\mathcal{V,U}$ be two Banach spaces, $\mathcal{K_V}\subset\mathcal{V}$ be a compact subset, and $\mathcal{F}:\mathcal{K_V}\to\mathcal{U}$ be a continuous mapping satisfying
	\[\|\mathcal{F}(v)-\mathcal{F}(\bar v)\|_{\mathcal{U}}\le w(\|v-\bar v\|_{\mathcal{V}})\]
	for all $v,\bar v\in\mathcal{K_V}$, where $w:[0,+\infty)\to[0,+\infty)$ is a non-decreasing continuous function. 
	Then, denoting $\mathcal{K=F(K_V)}$, we shall have
	$\tilde\epsilon_n(\mathcal{K})\le w(\tilde\epsilon_n(\mathcal{K_V})) .$
	The obvious corollary is
	$\epsilon_n(\mathcal{K})\le w(\tilde\epsilon_n(\mathcal{K_V})) .$
\end{lem}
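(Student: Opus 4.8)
The plan is to transplant the argument of Proposition~\ref{prop:decWidthLip} from decoders to coverings: instead of pushing forward a Lipschitz decoder $D:Z\to\mathcal{K_V}$, one pushes forward a near‑optimal inner covering of $\mathcal{K_V}$ by the map $\mathcal{F}$, and controls how the covering radius inflates via the modulus $w$. First I would record that $\mathcal{K}=\mathcal{F(K_V)}$ is compact, being the continuous image of the compact set $\mathcal{K_V}$, so all the entropy numbers involved are well defined and finite. Then, fixing any $\epsilon>\tilde\epsilon_n(\mathcal{K_V})$, I would invoke the definition of the inner entropy number (the variant of~\eqref{eq:ent_num_def} with the extra restriction $V_n\subset\mathcal{K_V}$) to obtain a finite set $V_n=\{v_1,\dots,v_m\}\subset\mathcal{K_V}$ with $m\le 2^n$ and $\sup_{v\in\mathcal{K_V}}\min_{1\le i\le m}\|v-v_i\|_{\mathcal{V}}\le\epsilon$.

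The key step is to push this covering forward by $\mathcal{F}$. Set $W_n=\{\mathcal{F}(v_1),\dots,\mathcal{F}(v_m)\}\subset\mathcal{K}$, which has at most $2^n$ points. For an arbitrary $u\in\mathcal{K}$, write $u=\mathcal{F}(v)$ with $v\in\mathcal{K_V}$, choose $i$ with $\|v-v_i\|_{\mathcal{V}}\le\epsilon$, and combine the modulus‑of‑continuity hypothesis with the monotonicity of $w$:
\[
\|u-\mathcal{F}(v_i)\|_{\mathcal{U}}=\|\mathcal{F}(v)-\mathcal{F}(v_i)\|_{\mathcal{U}}\le w(\|v-v_i\|_{\mathcal{V}})\le w(\epsilon).
\]
Taking the infimum over $i$ and the supremum over $u\in\mathcal{K}$ shows that $W_n\subset\mathcal{K}$ is a $w(\epsilon)$‑net of $\mathcal{K}$, hence $\tilde\epsilon_n(\mathcal{K})\le w(\epsilon)$.

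Finally I would let $\epsilon$ decrease to $\tilde\epsilon_n(\mathcal{K_V})$; since $w$ is continuous, $w(\epsilon)\to w(\tilde\epsilon_n(\mathcal{K_V}))$, which yields $\tilde\epsilon_n(\mathcal{K})\le w(\tilde\epsilon_n(\mathcal{K_V}))$. The stated corollary then follows immediately from the elementary inequality $\epsilon_n(\mathcal{K})\le\tilde\epsilon_n(\mathcal{K})$ recorded earlier in the text. I do not expect any genuine obstacle here: the argument is essentially a one‑line push‑forward of covers, and the only place that needs a moment's care is the passage to the limit $\epsilon\downarrow\tilde\epsilon_n(\mathcal{K_V})$, which is precisely why continuity and monotonicity of $w$ are built into the hypothesis.
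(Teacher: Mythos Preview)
Your proposal is correct and follows essentially the same push-forward-of-coverings argument as the paper. The only cosmetic difference is that the paper pulls $w$ directly through the $\sup\inf$ (using monotonicity and continuity to write $\sup_v\inf_{\bar v}w(\|v-\bar v\|)=w(\sup_v\inf_{\bar v}\|v-\bar v\|)$) instead of introducing an auxiliary $\epsilon>\tilde\epsilon_n(\mathcal{K_V})$ and passing to the limit, but the content is identical.
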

\begin{proof}
	For $V_n\subset\mathcal{K_V}$, we have
	\[\begin{split}
		\sup_{u\in \mathcal{F}(\mathcal{K_V})}\inf_{\bar u\in \mathcal{F}(V_n)}\|u-\bar u\|_{\mathcal{U}}
		&=\sup_{v\in \mathcal{K_V}}\inf_{\bar v\in V_n}\|\mathcal{F}(v)-\mathcal{F}(\bar v)\|_{\mathcal{U}}
		\\&\le\sup_{v\in \mathcal{K_V}}\inf_{\bar v\in V_n}w(\|v-\bar v\|_{\mathcal{V}})
		\\&=w\Bigl(\sup_{v\in \mathcal{K_V}}\inf_{\bar v\in V_n}\|v-\bar v\|_{\mathcal{V}}\Bigr)
	,\end{split}\]
	where the last equality is due to the monotonicity and the continuity of $w(\cdot)$. 
	Since $V_n\subset\mathcal{K_V},\mathcal{F}(V_n)\subset\mathcal{K}$ and $\#\mathcal{F}(V_n)\le\#V_n\le 2^n$, the lemma follows directly from the definition of the inner entropy number. 
\end{proof}
\begin{proof}[Proof of Theorem~\ref{thm:advL2EntKvsA}]
	Taking $p=2$ in Proposition~\ref{prop:advEg1Holder}, we see that 
	the solution mapping $S_0|_{\mathcal{A}}:\mathcal{A}\to\mathcal{U}=L^2([0,1]\times[-1,1])$ satisfies $\|S_0(c)-S_0(\bar c)\|_\mathcal{U}\le w(\|c-\bar c\|_{L^1([-1,1])})$, where $w(\epsilon)=r^{-1}\sqrt\epsilon$. 
	Lemma~\ref{lem:ent_Holder} then implies $\epsilon_n(S_0(\mathcal{A}))\le r^{-1}\sqrt{\tilde\epsilon_n(\mathcal{A})}$

	If $c,\bar c\in\mathcal{A}$, $u=S_0(c),\bar u=S_0(\bar c)$, we have
	\[\begin{split}
		\left\|u-\bar u\right\|_\mathcal{U}^2
		&=\int_{-1}^1\mathrm{d}x\int_0^1\left|\chi_{\{t\le\int_0^x\frac{\mathrm{d}s}{c(s)}\}}-\chi_{\{t\le\int_0^x\frac{\mathrm{d}s}{\bar c(s)}\}}\right|^2\,\mathrm{d}t
		\\&\le\int_{-1}^1\mathrm{d}x\int_0^11\,\mathrm{d}t
		\\&=2
	,\end{split}\]
	indicating $\mathrm{diam}(S_0(\mathcal{A}))\le\sqrt 2<2$. 
	Corollary~\ref{cor:decoVSentNum} then gives
	$
		d_{26n,4\cdot 2}^\mathrm{Deco}(S_0(\mathcal{A}))
		\le 3\epsilon_n(S_0(\mathcal{A}))
	,$
	and we conclude that 
	$d_{26n,8}^\mathrm{Deco}(S_0(\mathcal{A}))\le 3r^{-1}\sqrt{\tilde\epsilon_n(\mathcal{A})}$
\end{proof}
\begin{lem}\label{lem:ent_GRF}
	Let $\mathcal{V}$ be a Banach space. Given $\phi_0,\dots,\phi_K\in\mathcal{V}$, we let 
	\[\mathcal{K_V}=\left\{\phi_0+\sum_{k=1}^{K}\lambda_k\phi_k\ \middle|\ \lambda\in[-1,1]^K\right\}\subset\mathcal{V}\]
	as in Lemma~\ref{lem:decW_GRF}.
	Then
	$\tilde\epsilon_n(\mathcal{K_V})\le C_12^{-n/K}$
	holds for sufficiently large $n$, where $C_1=4K\left(\prod_{k=1}^K\mu_k\right)^{1/K}$, $\mu_k=\|\phi_k\|_\mathcal{V}>0$. 
\end{lem}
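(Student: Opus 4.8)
The plan is to build an explicit inner $\epsilon$-net for $\mathcal{K}_\mathcal{V}$ by laying down a product grid on the parameter cube $[-1,1]^K$ and pushing the grid centers through the affine parametrization $\lambda\mapsto\phi_0+\sum_{k=1}^K\lambda_k\phi_k$. The one ingredient needed is the elementary Lipschitz bound already exploited in Lemma~\ref{lem:decW_GRF}: for $\lambda,\bar\lambda\in[-1,1]^K$ one has $\bigl\|\sum_k(\lambda_k-\bar\lambda_k)\phi_k\bigr\|_\mathcal{V}\le\sum_k|\lambda_k-\bar\lambda_k|\mu_k$. So if for each $k$ we partition $[-1,1]$ into $m_k$ closed subintervals of length $2/m_k$, form the $\prod_k m_k$ product boxes, and let $\lambda^{(j)}$ be the center of box $j$, then every $\lambda\in[-1,1]^K$ lies in some box with $|\lambda_k-\lambda^{(j)}_k|\le 1/m_k$, whence the image of $\lambda$ is within $\sum_k\mu_k/m_k$ of the image of $\lambda^{(j)}$ in $\|\cdot\|_\mathcal{V}$. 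Since each $\lambda^{(j)}\in[-1,1]^K$, all these centers lie in $\mathcal{K}_\mathcal{V}$, so they form an admissible inner net, and $\tilde\epsilon_n(\mathcal{K}_\mathcal{V})\le\sum_k\mu_k/m_k$ for any choice of positive integers $m_k$ with $\prod_k m_k\le 2^n$.

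It then remains to choose the $m_k$. Dropping integrality, minimizing $\sum_k\mu_k/m_k$ subject to $\prod_k m_k=2^n$ gives (AM--GM, or a Lagrange multiplier) $m_k$ proportional to $\mu_k$ and optimal value $K\bigl(\prod_k\mu_k\bigr)^{1/K}2^{-n/K}$, i.e. exactly a quarter of the claimed bound. To absorb the rounding I would set $t=\tfrac14\bigl(\prod_k\mu_k\bigr)^{-1/K}2^{n/K}$ and take $m_k=\lceil t\mu_k\rceil$. Then $\sum_k\mu_k/m_k\le\sum_k\mu_k/(t\mu_k)=K/t=C_12^{-n/K}$ is immediate, and for the product constraint I would write $m_k\le t\mu_k+1$ and use the bound $1\le 3t\min_k\mu_k$ — valid as soon as $2^{n/K}\ge\tfrac43\bigl(\prod_k\mu_k\bigr)^{1/K}/\min_k\mu_k$ — to get $m_k\le 4t\mu_k$, hence $\prod_k m_k\le 4^Kt^K\prod_k\mu_k=2^n$. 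Combining the two estimates yields $\tilde\epsilon_n(\mathcal{K}_\mathcal{V})\le C_12^{-n/K}$ for all such $n$.

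I do not anticipate any genuinely hard step: the whole argument is a routine net-counting computation. The only place calling for a little care is the passage from the continuous optimum to integer $m_k$, and this is precisely why the statement carries the extra factor $4$ in $C_1$ and the qualification ``for sufficiently large $n$'' (one could make the threshold on $n$ explicit as above). A minor bookkeeping point to verify is that the grid boundary cases still satisfy $|\lambda_k-\lambda^{(j)}_k|\le 1/m_k$, which they do.
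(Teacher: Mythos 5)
Your proposal is correct and follows essentially the same route as the paper: construct an inner net by gridding the cube $[-1,1]^K$ into $\prod_k m_k$ boxes, pushing the centers through the affine parametrization, and choosing $m_k$ proportional to $\mu_k$ to balance the error $\sum_k\mu_k/m_k$ against the budget $\prod_k m_k\le 2^n$. The only (cosmetic) difference is in the rounding bookkeeping: the paper fixes $\delta_n=(\prod\mu_k)^{1/K}2^{-n/K}$ and takes $m_k=\lfloor\mu_k/\delta_n\rfloor$, which satisfies the product budget directly and needs a limiting argument to absorb the floor in the error bound, whereas you take $m_k=\lceil t\mu_k\rceil$ with $t=1/(4\delta_n)$, which gives the error bound directly and needs the estimate $m_k\le 4t\mu_k$ to meet the budget — your version has the small advantage of producing an explicit threshold on $n$.
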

\begin{proof}
	Given $n$, we take $\delta_n=2^{-n/K}\left(\prod_{k=1}^K\mu_k\right)^{1/K}$, and assume $n$ is large enough such that $\delta_n<\frac12\mu_k$ holds for $k=1,\dots,K$. 
	Let $m_k=\left\lfloor \mu_k/\delta_n\right\rfloor$. 
	The interval $[-1,1]$ is split into $m_k$ disjoint sub-intervals with equal length, and the midpoints of these sub-intervals form the set
	\begin{equation*}
		A_k=\left\{\frac{2i}{m_k}-1-\frac1{m_k}\ \middle|\ i=1,\dots,m_k\right\} 
	.\end{equation*}
	The number of points contained in $A=\prod_{k=1}^KA_k\subset[-1,1]^K$ is
	\[\#A=\prod_{k=1}^Km_k\le\prod_{k=1}^K\frac{\mu_k}{\delta_n}=2^n .\]

	Taking $V_n=\left\{\phi_0+\sum_{k=1}^{K}\lambda_k\phi_k\mid\lambda\in A\right\}\subset\mathcal{K_V}$, 
	it can be shown that
	\begin{equation}\label{eq:n6c96j}
		\sup_{v\in \mathcal{K_V}}\inf_{\bar v\in V_n}\|v-\bar v\|_\mathcal{V}\le\sum_{k=1}^{K}\frac{\mu_k}{m_k} .
	\end{equation}
	In fact, for any $v=\phi_0+\sum_{k=1}^{K}\lambda_k\phi_k\in \mathcal{K_V}$, we may find $\bar\lambda_k\in A_k$ with $|\bar\lambda_k-\lambda_k|\le\frac1{m_k}$, $k=1,\dots,K$ by the definition of each $A_k$. 
	Then $\bar v=\phi_0+\sum_{k=1}^{K}\bar\lambda_k\phi_k$ lies in $V_n$, and
	\[\|v-\bar v\|_\mathcal{V}\le\sum_{k=1}^{K}|\lambda_k-\bar\lambda_k|\cdot\|\phi_k\|_\mathcal{V}\le\sum_{k=1}^{K}\frac{\mu_k}{m_k} .\]
	We get~\eqref{eq:n6c96j} since $v$ is arbitrary. 

	Now $\tilde\epsilon_n(\mathcal{K_V})\le\sum_{k=1}^{K}\frac{\mu_k}{m_k}$ by the definition of the inner entropy number. 
	Since $m_k=\left\lfloor \mu_k/\delta_n\right\rfloor\ge\frac{\mu_k}{\delta_n}-1$, we have
	\[\tilde\epsilon_n(\mathcal{K_V})\le\delta_n\sum_{k=1}^{K}\frac{\mu_k}{\mu_k-\delta_n} .\]
	Note that
	\[\lim_{n\to\infty}\delta_n=0,\quad\lim_{n\to\infty}\sum_{k=1}^{K}\frac{\mu_k}{\mu_k-\delta_n}=\sum_{k=1}^{K}\frac{\mu_k}{\mu_k}=K ,\]
	and thus $\tilde\epsilon_n(\mathcal{K_V})\le 4K\delta_n$ holds for large enough $n$. 
	This proves the lemma. 
\end{proof}
\begin{proof}[Proof of the assertion in Example~\ref{eg:advEg1}]
	Again, we denote $\mathcal{A}=\{c^\lambda\mid\lambda\in\Lambda\}$. 
	According to Lemma~\ref{lem:decW_GRF} and the definition of $c^\lambda$, we see that 
	$\tilde\epsilon_n(\mathcal{A})\le C_12^{-n/K}$ for sufficiently large $n$, where
	$C_1=4K\left(\prod_{k=1}^{K}\|\phi_k\|_{L^1([-1,1])}\right)^{1/K} .$
	The assertion then follows directly by applying Theorem~\ref{thm:advL2EntKvsA}. 
\end{proof}

\section{Discussions and Conclusions}\label{sec:conclusion}
\begin{figure}[tb]
	\centering
	\includegraphics[width=0.2\linewidth]{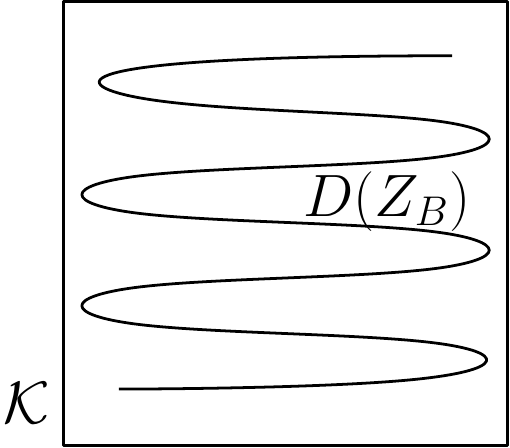}
	\caption{
	  If we do not impose the $l$-Lipschitz continuity constraint on the decoder mapping $D$, the resulting trial manifold $D(Z_B)$ may resemble a space-filling curve. 
	}%
	\label{fig:spFillCurve}
\end{figure}
In the definition of the decoder width, we impose the $l$-Lipschitz continuity condition as well as the restriction $\|\pmb{z}\|_2\le 1$. 
Both of them are required to give a reasonable notion of width. 
If either of the conditions is dropped, the width could become identically zero even for latent dimension $n=1$, since $D(Z_B)$ \verB{or $D(Z)$} may resemble the space-filling curve as illustrated in Figure~\ref{fig:spFillCurve}. 
In such cases, the corresponding ROM would indeed have a strong representational power, and be capable of approximating any desired solutions to the parametric PDEs. 
However, it leaves all the challenges to the online process of finding the latent vector $\pmb{z}$, as the corresponding optimization problem has become highly non-convex. 
This makes the ROM unuseful in the realistic settings. 

We also note that, although these two conditions are theoretically essential, they might not be introduced explicitly in the numerical implementation of the MAD method. 
As we have explained in~\cite{Ye2023MetaAD}, the hard constraint $\|\pmb{z}\|_2\le 1$ can be relaxed to a soft constraint, i.e. an additional penalty term in the objective function during the offline pre-training and the online fine-tuning stages. 
Furthermore, we represent the decoder mapping $D:Z\to\mathcal{U}$ as a neural network trained via stochastic gradient descent (SGD), 
which is typically known to exhibit the implicit regularization phenomenon~\cite{Neyshabur2014InSO,Neyshabur2017ExploringGD}. 
Even if the explicit Lipschitz continuity constraint is not imposed on the decoder mapping during the offline pre-training stage, 
we would expect to obtain a $D$ without the ill-posedness that is theoretically possible. 
Further numerical verification may be helpful in this respect, and is left for future work. 

The current work admits many possible extensions, 
especially when we focus on the parametric PDEs on a variable domain. 
While the analysis in this paper is limited to the elliptic case, it may be extended to a broader range of PDEs, 
including the parabolic equations and the wave equations. 
Furthermore, our Assumption~\ref{ass:vardomTransforms} requires 
a shared topological structure among all possible domains $\Omega$.
For scenarios where the domain topology changes, 
the MAD method (as a numerical solver) is still applicable due to its flexibility, 
but a theoretical estimation of the corresponding decoder width remains to be developed. 
As a reference domain is no longer available in this setting, new methods of analysis have to be introduced. 

In sum, 
this paper analyzes the basic properties of the decoder width, which serves as a quantified criterion for the best possible performance of the MAD method. 
Theoretical estimations of the decoder width for several parametric PDEs are provided, including the advection equations (Section~\ref{sec:advVC}) where classical linear ROMs appear to be inefficient, and the elliptic equations on a variable domain (Section~\ref{sec:ellpVD}) which could bring inconvenience for other deep autoencoder-based ROMs~\cite{Lee2020ModelRD,Fresca2021AComprehensiveDL,Fresca2022DeepLB,Gruber22AComparisonNN}. 
These results indicate the great potential of the MAD method for solving these parametric PDEs.

\bibliographystyle{amsplain}
\bibliography{dec-width}
\appendix
\section{Nomenclature}
\begin{table}[htpb]
	\centering
	\caption{Summary of the commonly used notations in this work. The specific meaning of the symbols may be different depending on the context. }
	\hspace*{-\textwidth}
	\begin{tabular}{ll}
	\hline
	Notation & Description\\
	\hline
		$\Omega\subset\R^d$ & the computational domain of the PDEs\\
		$u:\Omega\to\R^{d_u}$ & the solution to the PDEs (examples in this paper involve only $d_u=1$)\\
		$\mathcal{U}=\mathcal{U}(\Omega;\R^{d_u})$ & a Banach space in which the solutions lie\\
		$\Omega_T=[0,T]\times\Omega$ & the computational domain of the parabolic PDEs (Section~\ref{sec:parabEqn})\\
		$\Omega^\text{m}\supset\Omega$ & the master domain for the PDEs on a variable domain (Section~\ref{sec:ellpVD})\\
		$C_\text{P}$ & the Poincar\'e constant of $\Omega$ (fixed domain) or $\Omega^\text{m}$ (variable domain)\\
		$\eta$ & the variable parameter of the PDEs\\
		$\mathcal{A}$ & the space of the variable PDE parameters\\
		$S:\mathcal{A}\to\mathcal{U}$ & the solution mapping of the parametric PDEs\\
		$\mathcal{K}=S(\mathcal{A})$ & the solution set\\
		$\mathcal{A}_0\supset\mathcal{A}$ & the extended space of the PDE parameters\\
		$d_{\mathcal{A}_0}(\cdot,\cdot)$ & the metric on $\mathcal{A}_0$, typically inherited from the norm of a Banach space\\
		$S_0:\mathcal{A}_0\to\mathcal{U}$ & the extended solution mapping, satisfying $S=S_0|_{\mathcal{A}}$\\
		$S_\Lambda:\Lambda\to\mathcal{A}_0$ & parametrization of $\mathcal{A}$, satisfying $\mathcal{A}=S_\Lambda(\Lambda)$\\
		$Z=\R^n$ & the latent space, endowed with $\ell^2$-norm unless specified otherwise\\
		$Z_B$ & the closed unit ball of $Z$\\
		$D:Z\to\mathcal{U}$ & the decoder mapping\\
		$d_{n,l}^\text{Deco}(\cdot)$ & the decoder width~\eqref{eq:decWidth}\\
		$d_{n,l}^\text{rDeco}(\cdot)$ & the restricted decoder width defined in Section~\ref{sec:decWidthFD}\\
		$\epsilon_n(\cdot),\tilde\epsilon_n(\cdot)$ & the entropy number~\eqref{eq:ent_num_def}, and the inner entropy number\\
		$\bar B_R(\mathcal{V})$ & the closed ball of radius $R>0$ in the Banach space $\mathcal{V}$~\eqref{eq:defBR}\\
		$\ell(\R^d),\ell(\R^d\times\R^d)$ & the space $\R^d$ (or $\R^d\times\R^d$) endowed with vector (or matrix) $\ell^p$-norm $\|\cdot\|_p$\\
		$\Sigma^d_{r,R}(\Omega)$ & the set of matrix fields $\pmb{a}(\tilde{\pmb{x}})$ satisfying the uniform ellipticity assumption~\eqref{eq:defSigma_drR}\\
		$P_{Z_\Lambda}:Z\to Z_\Lambda$ & the metric projection from $Z$ to a compact convex subset $Z_\Lambda$~\eqref{eq:metricProjPK}\\
	\hline
	\end{tabular}
	\hspace*{-\textwidth}
\end{table}

\end{document}